\documentclass[12pt]{amsart}
\oddsidemargin=0in 
\evensidemargin=0in
\textwidth=6.5in 
\textheight=8.5in
\usepackage{amsmath,amsfonts,amsthm,amssymb,xypic}

\newcommand{\scrC}{\mathcal{C}}
\newcommand{\scrD}{\mathcal{D}}

\newcommand{\scrL}{\mathcal{L}}

\newcommand{\scrO}{\mathcal{O}}
\newcommand{\scrP}{\mathcal{P}}

\newcommand{\bC}{\mathbf{C}}
\newcommand{\bD}{\mathbf{D}}
\newcommand{\bE}{\mathbf{E}}

\newcommand{\bI}{\mathbf{I}}

\newcommand{\bP}{\mathbf{P}}

\newcommand{\bR}{\mathbf{R}}

\newcommand{\bbC}{\mathbb{C}}

\newcommand{\bbP}{\mathbb{P}}
\newcommand{\bbR}{\mathbb{R}}
\newcommand{\bbZ}{\mathbb{Z}}

\newcommand{\sfN}{\mathsf{N}}
\newcommand{\Sh}{\mathrm{Sh}}
\newcommand{\Hom}{\mathrm{Hom}}
\newtheorem{dummy}{Dummy}[section]
\newtheorem{proposition}[dummy]{Proposition}
\newtheorem{lemma}[dummy]{Lemma}
\newtheorem{corollary}[dummy]{Corollary}
\newtheorem{theorem}[dummy]{Theorem}
\theoremstyle{definition}
\newtheorem*{maintheorem}{Main Theorem}
\newtheorem{definition}[dummy]{Definition}
\newtheorem{remark}[dummy]{Remark}
\newtheorem{example}[dummy]{Example}
\newcommand{\Sets}{\mathbf{Sets}}
\newcommand{\Cat}{\mathbf{Cat}}

\newcommand{\Funct}{\mathrm{Funct}}
\newcommand{\twofunct}{2\mathrm{Funct}}
\newcommand{\twonat}{2\mathrm{Nat}}
\newcommand{\twomon}{2\mathrm{Mon}}
\newcommand{\twoexit}{2\mathrm{Exitm}}
\newcommand{\St}{\mathrm{St}}
\newcommand{\Prest}{\mathrm{Prest}}
\newcommand{\twolim}{2\!\varprojlim}
\newcommand{\twocolim}{2\!\varinjlim}
\newcommand{\wtwolim}[2]{\underset{#1}{\operatorname{2\!}\varprojlim}\,{#2}}
\newcommand{\wtwocolim}[2]{\underset{#1}{\operatorname{2\!}\varinjlim}\,{#2}}
\newcommand{\shHom}{\underline{\mathrm{Hom}}}
\newcommand{\cat}{\mathbf{cat}}
\newcommand{\gpd}{\mathbf{gpd}}
\newcommand{\res}{\mathit{res}}
\newcommand{\orpi}{\overrightarrow{\pi}}
\newcommand{\Strat}{\mathbf{Strat}}
\newcommand{\twocat}{\mathbf{2cat}}
\newcommand{\tame}{\mathit{tame}}
\newcommand{\tetrah}[4]{\xymatrix{#1 \ar[r] \ar[dr] \ar[d] & #2 \ar[d] 
\ar @{} [dr] |{=} & #1 \ar[r] \ar[d] & #2 \ar[d] \ar[dl] \\ #3 \ar[r] & #4 
& #3 \ar[r] & #4}}

\title{Exit paths and constructible stacks.}
\date{August 2007}
\author{David Treumann}

\begin{document}

\maketitle

\begin{abstract}
For a Whitney stratification $S$ of a space $X$ (or more generally a topological stratification in the sense of Goresky and MacPherson) we introduce the notion of an $S$-constructible stack of categories on $X$.  The motivating example is the stack of $S$-constructible perverse sheaves.  We introduce a 2-category $EP_{\leq 2}(X,S)$, called the exit-path 2-category, which is a natural stratified version of the fundamental 2-groupoid.  Our main result is that the 2-category of $S$-constructible stacks on $X$ is equivalent to the 2-category of 2-functors $\twofunct(EP_{\leq 2}(X,S),\Cat)$ from the exit-path 2-category to the 2-category of small categories.
\end{abstract}

\section{Introduction}

This paper is concerned with a generalization of the following well-known and very old theorem:

\begin{theorem}
\label{intro1}
Let $X$ be a connected, locally contractible topological space.  The category of locally constant sheaves 
of sets on $X$ is equivalent to the category of $G$-sets, where $G$ is the fundamental group of $X$.
\end{theorem}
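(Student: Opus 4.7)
The plan is to go through covering space theory. A locally constant sheaf of sets $\scrF$ on $X$ is equivalent, via the espace étalé construction, to a covering space $Y_\scrF \to X$, and morphisms of sheaves correspond to morphisms of covers. Since $X$ is connected and locally contractible (hence in particular locally path connected and semi-locally simply connected), the Galois classification of covering spaces applies, so the task is to upgrade that classification from covers to locally constant sheaves.

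Fix a basepoint $x_0 \in X$ and let $G = \pi_1(X,x_0)$. The first step is to build a monodromy functor $\Phi \colon \mathrm{LocConst}(X) \to G\text{-}\Sets$ sending $\scrF$ to its stalk $\scrF_{x_0} \cong Y_\scrF|_{x_0}$: for a loop $\gamma$ at $x_0$ and $y \in \scrF_{x_0}$, take the unique lift $\tilde\gamma$ of $\gamma$ to $Y_\scrF$ starting at $y$ and set $\gamma \cdot y := \tilde\gamma(1)$; the homotopy lifting property for covers ensures this depends only on $[\gamma]$. For the quasi-inverse, let $p\colon \tilde X \to X$ be the universal cover (which exists under our hypotheses) and, given a $G$-set $S$, let $\Psi(S)$ be the sheaf of local sections of the associated cover $\tilde X \times_G S \to X$; this is locally constant because $p$ is locally trivial. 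The composite $\Phi\circ\Psi\simeq\mathrm{id}$ is then immediate from the construction.

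I expect the main obstacle is verifying $\Psi\circ\Phi\simeq\mathrm{id}$, i.e.\ that any locally constant $\scrF$ is recovered from its monodromy. The key lemma is that $p^*\scrF$ is a constant sheaf on $\tilde X$ with stalk $\scrF_{x_0}$; one proves this directly by picking $\tilde x_0 \in p^{-1}(x_0)$, using path-lifting to extend the identification $(p^*\scrF)_{\tilde x_0} = \scrF_{x_0}$ to any other point of $\tilde X$, and using simple connectedness of $\tilde X$ to see that the result is independent of the chosen path. The deck-transformation action of $G$ on $\tilde X$ then transports to the monodromy action on $\scrF_{x_0}$, and descent along the principal $G$-bundle $p$ identifies $\scrF$ with $\tilde X \times_G \scrF_{x_0}$, which is exactly $\Psi(\Phi(\scrF))$. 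The delicate point is the bookkeeping that matches the two $G$-actions, which is essentially a direct cocycle calculation once the constancy of $p^*\scrF$ has been established.
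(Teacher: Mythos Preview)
Your argument is the classical one and is correct. Note, however, that the paper does not actually prove Theorem~\ref{intro1}: it is quoted in the introduction as a ``well-known and very old theorem'' serving as motivation, with no proof supplied. So there is no proof in the paper to compare against in the literal sense.

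That said, the paper's overall machinery does contain an implicit alternative proof, and it is genuinely different from yours. The paper's approach (carried out at the 2-categorical level in Theorem~\ref{twomonod}) is to define a functor $\sfN$ taking a monodromy representation $F:\pi_{\leq 1}(X)\to\Sets$ to the presheaf $U\mapsto\varprojlim_{\pi_{\leq 1}(U)}F$, then use a van Kampen theorem to show this is a sheaf, homotopy invariance to show it is locally constant, and a stalk computation to show $\sfN$ is an equivalence. Connectedness of $X$ then reduces $\Funct(\pi_{\leq 1}(X),\Sets)$ to $G$-$\Sets$. The difference from your approach is that the paper never invokes the universal cover or the espace \'etal\'e: everything is done intrinsically with the fundamental groupoid and limits over it. Your route is more geometric and more direct for this particular statement; the paper's route is designed to generalize cleanly to stacks and to stratified spaces, where there is no analogue of a universal cover to work with.
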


We wish to generalize this theorem in two directions.  In one direction we will consider sheaves which are not necessarily locally constant -- namely, constructible sheaves.  In the second direction we will consider sheaves of ``higher-categorical'' objects -- these generalizations of sheaves are usually called \emph{stacks}.  Putting these together, we get the ``constructible stacks'' of the title.  In this paper, we introduce an object -- the \emph{exit-path 2-category} -- which plays for constructible stacks the same role the fundamental group plays for locally constant sheaves.

\subsection{Exit paths and constructible sheaves}

A sheaf $F$ on a space $X$ is called ``constructible'' if the space may be decomposed into suitable pieces with $F$ locally constant on each piece.  To get a good theory one needs to impose some conditions on the decomposition -- for our purposes the notion of a \emph{topological stratification}, introduced in \cite{ih2}, is the most convenient.  A topological stratification $S$ of $X$ is a decomposition of $X$ into topological manifolds, called ``strata,'' which are required to fit together in a nice way.  (Topological stratifications are more general than Whitney stratifications and Thom-Mather stratifications \cite{thom}, \cite{mather}, and they are less general than Siebenmann's CS sets \cite{siebenmann} and Quinn's manifold stratified spaces \cite{quinn}.  A precise definition is given in \cite{ih2} and in section \ref{sec-cs}.)  A sheaf is called \emph{$S$-constructible} if it is locally constant along each stratum of $(X,S)$.

MacPherson observed (unpublished) that, for a fixed stratification $S$ of $X$, it is possible to give a description of the $S$-constructible sheaves on $X$ in terms of monodromy along certain paths.  A path $\gamma:[0,1] \to X$ has the \emph{exit property} with respect to $S$ if, for each $t_1 \leq t_2 \in [0,1]$, the dimension of the stratum containing $\gamma(t_1)$ is less than or equal to the dimension of the stratum containing $t_2$.  Here is a picture of an exit path in the plane, where the plane is stratified by the origin, the rays of the axes, and the interiors of the quadrants:

\begin{center}
\setlength{\unitlength}{3cm}
\begin{picture}(1,1)

\put(.5,0){\line(0,1){1}}
\put(0,.5){\line(1,0){1}}

\linethickness{1mm}
\put(.48,.5){\line(1,0){.12}}
\put(.6,.5){\qbezier(0,0)(.1,0)(.25,.25)}

\end{picture}
\end{center}

The concatenation of two exit paths is an exit path, and passing to homotopy classes yields a category we call $EP_{\leq 1}(X,S)$.  That is, the objects of $EP_{\leq 1}(X,S)$ are points of $X$, and the morphisms are homotopy classes of exit paths between points.  (We require that the homotopies $h:[0,1] \times [0,1] \to X$ have the property that each $h(t,-)$ is an exit path, and that $h$ does not intersect strata in a pathological way.  We conjecture that the latter ``tameness'' condition can be removed.)  Then we have the following analog of theorem \ref{intro1}:

\begin{theorem}[MacPherson]
\label{intro2}
Let $(X,S)$ be a topologically stratified space.  The category of $S$-constructible sheaves of sets is equivalent to the category $\Funct\big( EP_{\leq 1}(X,S),\Sets\big)$
of $\Sets$-valued functors on $EP_{\leq 1}(X,S)$.
\end{theorem}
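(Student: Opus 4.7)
The plan is to construct an equivalence by producing a ``stalk plus monodromy'' functor $\Psi$ from $S$-constructible sheaves of sets to $\Funct(EP_{\leq 1}(X,S),\Sets)$ together with a quasi-inverse $\Phi$ that rebuilds a sheaf from its local data. On a point $x \in X$ I set $\Psi(F)(x) = F_x$. To define the action on an exit path $\gamma:[0,1]\to X$, I pull $F$ back to a constructible sheaf $\gamma^{*}F$ on $[0,1]$. The exit property forces the stratum dimension of $\gamma(t)$ to be nondecreasing, so there is a finite subdivision $0=t_0<t_1<\cdots<t_n=1$ on each of whose open subintervals $\gamma$ stays in a single stratum; on each subinterval $\gamma^{*}F$ is locally constant, yielding canonical stalk isomorphisms, and at each transition time $t_i$ the local conical triviality of $(X,S)$ combined with $S$-constructibility supplies a generization map $F_{\gamma(t_i^-)} \to F_{\gamma(t_i^+)}$. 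The composite of these maps is $\Psi(F)(\gamma)$.

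Next I would verify that $\Psi(F)(\gamma)$ depends only on the tame homotopy class of $\gamma$ and is compatible with concatenation. For homotopy invariance I pull $F$ back along a tame exit-homotopy $h:[0,1]^{2}\to X$ and use tameness to subdivide the square into finitely many cells on which $h^{*}F$ is either locally constant or related by a single generization; a fibered Thom-Mather triviality shows the two routes around each cell give equal compositions, so the total boundary traversal commutes. Concatenation compatibility is automatic from the subdivision procedure, and naturality in $F$ is formal. Thus $\Psi$ is a well-defined functor.

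For the inverse, given $G:EP_{\leq 1}(X,S)\to \Sets$ I would assemble a presheaf $\Phi(G)$ whose sections over an open $U$ are the compatible families $\{s_x \in G(x)\}_{x\in U}$ with $G(\gamma)(s_x)=s_y$ for every exit path $\gamma$ in $U$ from $x$ to $y$; sheafification is then easily seen to be unnecessary. The crucial local input is that every point $x$ in a stratum $\sigma$ admits a cofinal system of ``distinguished'' conical neighborhoods $U$ for which the inclusion of the full subcategory $\{x\}\hookrightarrow EP_{\leq 1}(U, S|_U)$ is initial: every point of $U$ is reached from $x$ by an essentially unique homotopy class of exit paths. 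Granting this, the stalk of $\Phi(G)$ at $x$ is canonically $G(x)$, $\Phi(G)$ is $S$-constructible, its monodromy along $\gamma$ recovers $G(\gamma)$, and the two round-trips $\Phi\Psi$ and $\Psi\Phi$ are naturally isomorphic to the identities. The main obstacle I anticipate is proving this local cofinality statement, since it requires showing that inside a distinguished conical neighborhood every exit path based at the cone point can be tamely exit-homotoped to a radial path in an essentially unique way. This forces one to extract real mileage from the conical local structure built into the notion of a topological stratification, and it is precisely the step where the tameness hypothesis on homotopies plays its role.
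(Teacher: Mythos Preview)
The paper does not supply a self-contained proof of Theorem~\ref{intro2}; it is quoted in the introduction as an unpublished result of MacPherson, serving as motivation for the paper's main 2-categorical theorem.  One can, however, extract a proof by specializing the Main Theorem (Theorem~\ref{finaltheorem}) to discrete stacks, and it is against that implicit argument that your proposal should be compared.

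Your construction and the paper's share the same backbone but organize the work differently.  Your inverse functor $\Phi$, sending $G$ to the sheaf of $G$-compatible families over each open set, is exactly the 1-categorical specialization of the paper's 2-functor $\sfN$ (see the definition preceding Theorem~\ref{orpimonodromy}): for a set-valued functor the 2-limit $\twolim_{EP_{\leq 2}(U)} G$ collapses to the ordinary limit, i.e.\ to your compatible families.  Likewise, the local cofinality statement you isolate as the main obstacle---that in a conical neighborhood every point is reached from the cone point by an essentially unique exit-path class---is precisely the 1-truncation of the paper's cone axiom~(C), proved for $EP_{\leq 2}$ in Theorem~\ref{epc}; the radial contraction you anticipate is the argument given there.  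Where you genuinely diverge is in the forward direction: you construct an explicit monodromy functor $\Psi$ by parallel transport along exit paths, while the paper never writes down a quasi-inverse to $\sfN$.  Instead it proves $\sfN$ is an equivalence abstractly, using the van Kampen axiom (Theorem~\ref{epvk}) to show $\sfN F$ is a stack and to glue local data, and then inducting on dimension via the cone axiom (Theorem~\ref{orpimonodromy}).  Your route is closer to the classical monodromy picture for local systems and yields a more tangible description of the equivalence; the paper's axiomatic route is less explicit but scales uniformly to the 2-categorical setting that is its real target.  Both arguments rest on the same local input about cones, and your identification of that step as the crux is on the mark.
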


\begin{example}
\label{introex1}
Let $D$ be the open unit disk in the complex plane, and let $S$ be the stratification by the origin $\{0\}$ and its complement $D-\{0\}$.  Then $EP_{\leq 1}(D,S)$ is equivalent to a category with two objects, one labeled by $0$ and one labeled by some other point $x \in D - \{0\}$.  The arrows in this category are generated by arrows $\alpha: 0 \to x$ and $\beta:x \to x$; $\beta$ generates the automorphism group $\bbZ$ and we have $\beta \circ \alpha = \alpha$.  The map $\alpha$ is represented by an exit path from $0$ to $x$ in $D$ and the path $\beta$ is represented by a loop around $0$ based at $x$.

It follows that an $S$-constructible sheaf (of, say, complex vector spaces) on $D$ is given by two vector spaces $V$ and $W$, a morphism $a:V \to W$ and a morphism $b:W \to W$, with the property that $b$ is invertible and that $b \circ a = a$.
\end{example}

\begin{example}
\label{introex2}
Let $\bbP^1 = \bbC \cup \infty$ be the Riemann sphere, and let $S$ be the stratification of $\bbP^1$ by one point $\{\infty\}$ and the complement $\bbC$.  Then $EP_{\leq 1}(\bbP^1,S)$ is equivalent to a category with two objects, one labeled by $\infty$ and the other labeled by $0 \in \bbC$.  The only nontrivial arrow in this category is represented by an exit path from $\infty$ to $0$; all such paths are homotopic to each other.

It follows that an $S$-constructible sheaf on $\bbP^1$ is given by two vector spaces $V$ and $W$, and a single morphism $V \to W$.
\end{example}

\subsection{Perverse sheaves}
\label{introsection2}

Let $(X,S)$ be a topologically stratified space.  For each function $p:S \to \bbZ$ from connected strata of $(X,S)$ to integers, there is an abelian category $\bP(X,S,p)$ of ``$S$-constructible perverse sheaves on $X$ of perversity $p$,'' introduced in \cite{bbd}.  It is a full subcategory of the derived category of sheaves on $X$; its objects are complexes of sheaves whose cohomology sheaves are $S$-constructible, and whose derived restriction and corestriction to strata satisfy certain cohomology vanishing conditions depending on $p$.  It is difficult to lay hands on the objects and especially the morphisms of $\bP(X,S,p)$, although we get $\Sh_S(X)$ as a special case when $p$ is constant.

There is a small industry devoted to finding concrete descriptions of the category $\bP(X,S,p)$ in terms of ``linear algebra data,'' similar to the description of $S$-constructible sheaves given in the examples above (\cite{macphersonvilonen}, \cite{gmv}, \cite{bradengrinberg}, \cite{braden}, \cite{vybornov}).  Here $(X,S)$ is usually a complex analytic space, with complex analytic strata, and $p$ is the ``middle perversity'' which associates to each stratum its complex dimension.  The first example was found by Deligne:

\begin{example}
\label{introex3}
Let $D$ and $S$ be as in example \ref{introex1}.  The category $\bP(X,S,p)$, where $p$ is the middle perversity, is equivalent to the category of tuples $(V,W,m,n)$, where $V$ and $W$ are $\bbC$-vector spaces, $m:V \to W$ and $n:W \to V$ are linear maps, and $1_W-mn$ is invertible.
\end{example}

A topological interpretation of this description was given by MacPherson and Vilonen \cite{macphersonvilonen}.  If $(L,S)$ is a compact topologically stratified space then the open cone $CL$ on $L$ has a natural topological stratification $T$, in which the cone point is a new stratum.  MacPherson and Vilonen gave a description of perverse sheaves on $(CL,T)$ in terms of perverse sheaves on $L$, generalizing Deligne's description \ref{introex3}.  

One of the important properties of perverse sheaves is that they form a \emph{stack}; it means that a perverse sheaf on a space $X$ may be described in the charts of an open cover of $X$.  A topologically stratified space has an open cover in which the charts are of the form $CL\times \bbR^k$.  The stack property together with the MacPherson-Vilonen construction give an inductive strategy for computing categories of perverse sheaves.  One of the motivations for the theory in this paper is to analyze this strategy systematically; see \cite{treumann}.

\subsection{Constructible stacks}

In this paper, we introduce the notion of a \emph{constructible stack} on a topologically stratified space.  Our main example is the stack $\scrP$ of $S$-constructible perverse sheaves discussed in section \ref{introsection2}.  Our main result is a kind of classification of constructible stacks, analogous to the description of constructible sheaves by exit paths.

\begin{maintheorem}[Theorem \ref{finaltheorem}]
\label{maintheorem}
Let $(X,S)$ be a topologically stratified space.  There is a 2-category $EP_{\leq 2}(X,S)$, introduced in section \ref{sec7}, such that the 2-category of $S$-constructible stacks on $X$ is equivalent to the 2-category of 2-functors $\mathrm{2Funct}(EP_{\leq 2}(X,S),\mathbf{Cat})$.
\end{maintheorem}

The appearance of 2-categories in this theorem is an application of a well-known philosophy of Grothendieck \cite{pursuingstacks}.  It is a modification of theorems in \cite{polesellowaschkies} and \cite{toen}, where it was shown that \emph{locally constant} stacks on $X$ correspond to representations of higher \emph{groupoids}, namely the groupoid of points, paths, homotopies, homotopies between homotopies, and so on in $X$.  $EP_{\leq 2}(X)$ is a 2-truncated, stratified version of this: the objects are the points of $X$, the morphisms are exit paths, and the 2-morphisms are homotopy classes of homotopies between exit paths.  (Once again, we require a tameness condition on our homotopies and also our homotopies between homotopies.)

\begin{example}
Let $\bbP^1$ and $S$ be as in example \ref{introex2}.  Then $EP_{\leq 2}(\bbP^1,S)$ is equivalent to a 2-category with two objects, labeled by $\infty$ and $0$ as before, and one arrow from $\infty$ to $0$ represented by an exit path $\alpha$. The group of homotopies from $\alpha$ to itself is $\bbZ$, generated by a homotopy that rotates $\alpha$ around the 2-sphere once.

It follows that an $S$-constructible stack on $\bbP^1$ is given by a pair $\bC_{\infty}$ and $\bC_0$ of categories, a  functor $\alpha:\bC_{\infty} \to \bC_0$, and a natural automorphism $f:\alpha \to \alpha$.  For the stack of $S$-constructible perverse sheaves, $\bC_0$ is the category of vector spaces, $\bC_{\infty}$ is Deligne's category described in example \ref{introex3}, $\alpha$ is the forgetful functor $\alpha:(V,W,m,n) \mapsto W$, and $f$ is the map $1_W - mn$, which is invertible by assumption.
\end{example}

\subsection{Notation and conventions}

$\bbR$ denotes the real numbers.  For $a,b \in \bbR$ with $a \leq b$ we use $(a,b)$ to denote the open interval and $[a,b]$ to denote the closed interval between $a$ and $b$.  We use $[a,b)$ and $(a,b]$ to denote half-open intervals.  If $L$ is a compact space then $CL$ denotes the open cone on $L$, that is, the space $L \times [0,1) / L \times \{0\}$.  A $d$-cover of a space $X$ is a collection of open subsets of $X$ that covers $X$ and that is closed under finite intersections.

For us, a ``2-category'' is a strict 2-category in the sense that composition of 1-morphisms is strictly associative.  On the other hand we use ``2-functor'' to refer to morphisms of 2-categories that only preserve composition of 1-morphisms up to isomorphism.  We will refer to sub-2-categories of a 2-category $\bC$ as simply ``subcategories of $\bC$.''  For more basic definitions and properties of 2-categories see appendix B.

All our stacks are stacks of categories.  We write $\Prest(X)$ for the 2-category of prestacks on a space $X$ and $\St(X) \subset \Prest(X)$ for the full subcategory of $\Prest(X)$ whose objects are stacks.  We use $\St_{lc}(X) \subset \St(X)$ to denote the full subcategory of locally constant stacks on $X$, which we introduce
in section \ref{sec-lcstacks}.  When $S$ is a topological stratification (definition \ref{topstratspace}) of $X$ we write $\St_S(X) \subset \St(X)$ for the full subcategory of $S$-constructible stacks on $X$, which we introduce in section \ref{sec-cs}.  For more basic definitions and properties of stacks see appendix A.

\section{Locally constant stacks}
\label{sec-lcstacks}

In this section we introduce locally constant stacks of categories.  A stack is called \emph{constant} if it equivalent to the stackification of a constant prestack, and \emph{locally constant} if this is true in the charts of an open cover.  Our main objective is to give an equivalent definition that is easier to check in practice: on a locally contractible space, a stack $\scrC$ is locally constant if and only if the restriction functor $\scrC(U) \to \scrC(V)$ is an equivalence of categories whenever $V$ and $U$ are contractible.  This is theorem \ref{lcscrit}.  We also develop some basic properties of locally constant stacks, including a base-change result (theorem \ref{serrebasechange}) and the homotopy invariance of the 2-category of locally constant stacks (theorem \ref{lcshotinv}).

\subsection{Constant stacks}  
Let $\bC$ be a small category.  On any space $X$ we have the constant
$\bC$-valued prestack, and its stackification.  We will denote the prestack by $\bC_{p;X}$, and its stackification by $\bC_X$.

\begin{example}
\label{ex-localsystems}
Let $X$ be a locally contractible space, or more generally any space in which each point has a fundamental system of neighborhoods over which each locally constant sheaf is constant.  If $\bC$ is the category of sets, then $\bC_X$ is naturally equivalent to the stack $\scrL\scrC_X$ of locally constant sheaves.  That is, the map 
$\bC_{p;X} \to \scrL\scrC_X$ that takes a set $E \in \bC = \bC_{p;X}(U)$ to the constant sheaf over $U$ with fiber $E$ induces an equivalence $\bC_X \to \scrL\scrC_X$.  Indeed, it induces an equivalence on stalks by the local contractibility of $X$.
\end{example}

\begin{proposition}
\label{constantstacks}
Let $X$ be a locally contractible space and $\bC$ be a small category.
\begin{enumerate}
\item If $F$ and $G$ are objects in $\bC_X(X)$, then the sheaf $\shHom(F,G):U \mapsto \Hom(F\vert_U,G\vert_U)$
is locally constant on $X$.

\item
Let $U \subset X$ be an open set.  For each point $x \in U$, the restriction functor from $\bC = \bC_{p;X}(U)$ to the stalk of $\bC_X$ at $x$ is an equivalence of categories.

\item
Let $U \subset X$ be an open set.  Suppose that $U$ is contractible.  Then for each point $x \in U$, the restriction functor $\bC_X(U) \to \bC_{X,x}$ is an equivalence of categories.

\end{enumerate}

\end{proposition}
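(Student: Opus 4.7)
I would prove the three parts in the order (2), (1), (3), with each building on the ones before.

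\emph{Part (2).} The stalk $\bC_{X,x} = \wtwocolim{V \ni x}{\bC_X(V)}$ is identified with the stalk of the prestack $\bC_{p;X}$ at $x$, using the fact that stackification preserves stalks in the 2-categorical setting (a 2-analogue of the sheafification theorem, to be established in Appendix~A). The stalk of $\bC_{p;X}$ is $\wtwocolim{V \ni x}{\bC}$, a 2-colimit of a constant diagram with identity transitions, canonically equivalent to $\bC$. The functor in question, $\bC = \bC_{p;X}(U) \to \bC_{X,x}$, factors through $\bC_{p;X,x} \simeq \bC$ and is therefore an equivalence.

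\emph{Part (1).} By the universal property of stackification, there is an open cover $\{V_i\}$ of $X$ together with objects $c_i, d_i \in \bC$ such that $F|_{V_i}$ and $G|_{V_i}$ are isomorphic to the images of $c_i, d_i$ under $\bC_{p;X} \to \bC_X$. For $W \subseteq V_i$ open, the hom set $\Hom_{\bC_X(W)}(F|_W, G|_W)$ equals the sheafification at $W$ of the constant presheaf $W \mapsto \Hom_\bC(c_i, d_i)$. On a locally path-connected space this sheafification is the locally constant sheaf with fiber $\Hom_\bC(c_i, d_i)$, so $\shHom(F, G)|_{V_i}$ is locally constant, proving (1).

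\emph{Part (3).} I would combine (1) and (2). For essential surjectivity of $\bC_X(U) \to \bC_{X,x}$: given $\xi \in \bC_{X,x}$, by (2) there is $c \in \bC$ whose image is isomorphic to $\xi$, and then the image of $c$ under $\bC = \bC_{p;X}(U) \to \bC_X(U)$ has stalk isomorphic to $\xi$. For fully faithfulness: for $F, G \in \bC_X(U)$ the map
$$\Hom_{\bC_X(U)}(F, G) = \Gamma\big(U, \shHom(F, G)\big) \longrightarrow \shHom(F, G)_x = \Hom_{\bC_{X,x}}(F_x, G_x)$$
is restriction to the stalk. By (1) the sheaf $\shHom(F, G)$ is locally constant on $U$; contractibility of $U$ (hence connectedness and local path-connectedness) implies that global sections of a locally constant sheaf agree with its stalk at any point, giving a bijection.

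\emph{Main obstacle.} The central technical point is the 2-categorical fact that stackification preserves stalks; it drives (2) and, via (2), the essential surjectivity in (3). The remaining ingredients — sheafification of a constant presheaf on a locally path-connected space, agreement of global sections with stalks for a locally constant sheaf on a contractible space, and the existence of local presentations for sections of a stackification — are all standard.
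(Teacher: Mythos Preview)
Your proof is correct, and for parts (2) and (3) it is essentially identical to the paper's: the paper also calls (2) ``trivial'' (relying on the appendix fact that stackification preserves stalks), and for (3) it deduces essential surjectivity from the factorization $\bC_{p;X}(U) \to \bC_X(U) \to \bC_{X,x}$ and full faithfulness from (1) together with the constancy of $\shHom(F,G)$ on a contractible $U$.

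The genuine difference is in part (1). You argue locally: choose a cover over which $F$ and $G$ are presented by objects $c_i,d_i \in \bC$, identify $\shHom(F,G)\vert_{V_i}$ with the sheafification of the constant presheaf $\Hom_\bC(c_i,d_i)$, and conclude. The paper instead works globally and functorially: the Hom bifunctor $\bC_{p;X}^{op}\times\bC_{p;X}\to\Sets_{p;X}$ stackifies to a morphism $\bC_X^{op}\times\bC_X \to \Sets_X$, and the paper has already identified $\Sets_X$ with the stack $\scrL\scrC_X$ of locally constant sheaves (Example~\ref{ex-localsystems}); the image of $(F,G)$ under this map is precisely $\shHom(F,G)$, so it lands in $\scrL\scrC_X(X)$ automatically. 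Your argument is more concrete and self-contained; the paper's is shorter and cover-free, but it front-loads the work into the identification $\Sets_X\simeq\scrL\scrC_X$, which itself uses local contractibility. One minor point: the sheafification of a constant presheaf is the constant sheaf on any space, so your appeal to local path-connectedness in (1) is not actually needed.
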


\begin{proof}
The map of prestacks $\Hom:\bC_{p;X}^{op} \times \bC_{p;X} \to \Sets_{p;X}$ 
induces a map of stacks
$\bC_X^{op} \times \bC_X \to \Sets_X \cong \scrL\scrC_X$.  The object in 
$\scrL\scrC_X(X)$ associated to a pair $(F,G) \in \big(\bC_X^{op} \times \bC_X\big)(X)$ is exactly the sheaf $\shHom(F,G)$.  This proves the first assertion.

The second assertion is trivial.  To prove the third assertion, note that $\bC_X(U) \to \bC_{X,x}$ is always essentially surjective, since the equivalence $\bC_{p;X}(U) \to \bC_{X,x}$ factors through it.  We therefore only have to show that $\bC_X(U) \to \bC_{X,x}$ is fully faithful.  For objects $F$ and $G$ of $\bC_X(U)$, we have just seen that $\shHom(F,G)$ is locally constant on $U$.  Since $U$ is contractible
$\shHom(F,G)$ is constant, and so $\Hom_{\bC_X(U)}(F,G) = \shHom(F,G)(U) \to \shHom(F,G)_x \cong \Hom(F_x,G_x)$ is a bijection.  This completes the proof.
\end{proof}

\subsection{Locally constant stacks}

\begin{definition}
A stack $\scrC$ on $X$ is called \emph{locally constant} if there exists an open cover $\{U_i\}_{i \in I}$
of $X$ such that $\scrC\vert_{U_i}$ is equivalent to a constant stack.  Let $\St_{lc}(X) \subset \St(X)$
denote the full subcategory of the 2-category of stacks on $X$ whose objects are the locally constant stacks.
\end{definition}

\begin{proposition}
\label{constantstacks2}
Let $X$ be a locally contractible space and let $\scrC$ be a locally constant stack on $X$.  
\begin{enumerate}
\item Let $U \subset X$ be an open set, and let $F,G \in \scrC(U)$.  The sheaf $\shHom(F,G)$ is locally constant on $U$.

\item Every point $x \in X$ has a contractible neighborhood $V$ such that the restriction map $\scrC(V) \to \scrC_x$ is an equivalence of categories.

\end{enumerate}
\end{proposition}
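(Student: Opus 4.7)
The plan is to deduce both parts from Proposition \ref{constantstacks} (the analogous statements for genuinely constant stacks) by passing to an open cover where $\scrC$ becomes constant. Because $\scrC$ is only assumed locally equivalent to a constant stack, the strategy is to observe that both conclusions are local on $X$ and then verify them on the members of a trivializing cover.

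For (1), fix $U \subset X$ open and $F, G \in \scrC(U)$. Choose an open cover $\{U_i\}$ of $X$ such that each $\scrC|_{U_i}$ is equivalent to a constant stack $(\bC_i)_{U_i}$ for some small category $\bC_i$; the collection $\{U \cap U_i\}$ covers $U$. On $U \cap U_i$ the objects $F|_{U \cap U_i}$ and $G|_{U \cap U_i}$ correspond under the equivalence to objects of $(\bC_i)_{U_i}(U \cap U_i)$, and the sheaf $\shHom(F,G)|_{U \cap U_i}$ is identified with the corresponding internal $\shHom$ in the constant stack. Proposition \ref{constantstacks}(1) tells us this sheaf is locally constant on $U \cap U_i$. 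Since local constancy of a sheaf is itself a local property, and the $U \cap U_i$ cover $U$, we conclude that $\shHom(F,G)$ is locally constant on $U$.

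For (2), fix $x \in X$. Choose an open neighborhood $W$ of $x$ on which $\scrC|_W$ is equivalent to a constant stack $\bC_W$. Because $X$ is locally contractible, we can shrink $W$ and find a contractible open neighborhood $V \subset W$ of $x$. The equivalence $\scrC|_W \simeq \bC_W$ restricts to an equivalence $\scrC|_V \simeq \bC_V$, and under this equivalence the restriction functor $\scrC(V) \to \scrC_x$ is identified with $\bC_V(V) \to \bC_{V,x}$. By Proposition \ref{constantstacks}(3), applied to the contractible open set $V$ in the locally contractible space $X$ (or simply in $V$ itself), the latter functor is an equivalence of categories. Hence so is $\scrC(V) \to \scrC_x$.

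There is really no serious obstacle: the proposition is essentially the statement that ``locally constant'' behaves as expected with respect to the conclusions already established for ``constant.'' The only care needed is to ensure that the equivalence of stacks $\scrC|_{U_i} \simeq (\bC_i)_{U_i}$ transports both the internal $\shHom$ and the stalk-restriction functor in a compatible way, which is immediate from the definition of an equivalence of stacks and the functoriality of the stalk construction.
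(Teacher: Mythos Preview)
Your proof is correct and is essentially identical to the paper's: both assertions are reduced, via a trivializing open cover and local contractibility, to the corresponding parts of Proposition~\ref{constantstacks}. The paper simply records this in one line (``Assertion 1 follows directly from assertion 1 of proposition~\ref{constantstacks}, and assertion 2 follows directly from assertion 3 of proposition~\ref{constantstacks}''), while you have spelled out the details of that reduction.
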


\begin{proof}
Assertion 1 follows directly from assertion 1 of proposition \ref{constantstacks}, and assertion 2 follows directly from assertion 3 of proposition \ref{constantstacks}.
\end{proof}

\begin{proposition}
\label{prop-pullpushlc}
Let $X$ and $Y$ be topological spaces, and let $f:X \to Y$ be a continuous map.  Suppose $\scrC$ is locally constant on $Y$.  Then $f^*\scrC$ is locally constant on $X$.
\end{proposition}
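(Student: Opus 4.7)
The plan is to reduce to the case where $\scrC$ is itself a constant stack. By hypothesis there is an open cover $\{V_i\}_{i \in I}$ of $Y$ and small categories $\bC_i$ together with equivalences $\scrC\vert_{V_i} \simeq (\bC_i)_{V_i}$. Setting $U_i := f^{-1}(V_i)$ yields an open cover of $X$. Since pullback commutes with restriction to open sets, there is a canonical equivalence $(f^*\scrC)\vert_{U_i} \simeq (f\vert_{U_i})^*(\scrC\vert_{V_i})$. Hence it suffices to verify that pullback along an arbitrary continuous map sends constant stacks to constant stacks.

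The remaining claim is that if $g:X' \to Y'$ is continuous and $\bC$ is a small category, then $g^*(\bC_{Y'}) \simeq \bC_{X'}$. I would first establish the analogue at the level of prestacks, namely $g^{-1}(\bC_{p;Y'}) \simeq \bC_{p;X'}$, where $g^{-1}$ denotes the prestack pullback, computed as a left Kan extension indexed by the open sets of $Y'$ containing the image of a given open subset of $X'$. Because every restriction functor in the constant prestack $\bC_{p;Y'}$ is the identity on $\bC$, the diagram being Kan-extended is constant, and each of the relevant colimits collapses to $\bC$; the identification with $\bC_{p;X'}$ is then immediate and compatible with restriction.

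To lift this to stacks I would invoke the general principle that stackification commutes with prestack pullback. Both operations are left adjoints — stackification is left adjoint to the inclusion $\St(Y') \hookrightarrow \Prest(Y')$, and $g^{-1}$ is left adjoint to the prestack pushforward $g_*$ — and $g^*$ on stacks is by definition the composite of $g^{-1}$ with stackification. Applying stackification to the prestack equivalence $g^{-1}(\bC_{p;Y'}) \simeq \bC_{p;X'}$ therefore produces the desired equivalence $g^*(\bC_{Y'}) \simeq \bC_{X'}$.

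The step that will require the most care is the commutation of stackification with prestack pullback in the 2-categorical setting: one must interpret the relevant adjunctions as 2-adjunctions and track the equivalence up to a coherent 2-isomorphism, rather than merely a strict equality. Everything else in the argument — the reduction via open cover, the collapse of the Kan extension for the constant diagram, and the final application of stackification — is formal manipulation of the definitions recalled in the introduction and in Section~\ref{sec-lcstacks}.
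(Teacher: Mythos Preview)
Your proposal is correct and follows essentially the same approach as the paper: cover $Y$ by opens on which $\scrC$ is constant, pull back the cover, and use that the pullback of a constant stack is constant. The paper's proof is a single sentence asserting this last fact without justification, whereas you spell it out via the prestack identity $g_p^*(\bC_{p;Y'}) \simeq \bC_{p;X'}$ and the compatibility of stackification with pullback; your additional detail is sound and the caution you flag about the 2-categorical commutation is appropriate but not an obstruction.
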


\begin{proof}
If $\scrC$ is constant over the open sets $U_i$ of $Y$, then $f^* \scrC$ will be constant over the open sets $f^{-1}(U_i)$ of $X$.
\end{proof}

The homotopy invariance of $\St_{lc}(X)$ is a consequence of the following base-change result:

\begin{proposition}
\label{basechangelc}
Let $X$ and $Y$ be topological spaces, and let $f:X \to Y$ be a continuous map.  Let $g$ denote the map $(id,f):[0,1] \times X \to [0,1] \times Y$, and let $p:[0,1] \times X \to X$ and $q:[0,1] \times Y \to Y$ be the natural projection maps.
$$\xymatrix{
[0,1] \times X \ar[r]^g \ar[d]_p & [0,1] \times Y \ar[d]^q \\
X \ar[r]_f & Y
}
$$
Let $\scrC$ be a locally constant stack on $[0,1] \times Y$.  Then the base change map $f^* q_* \scrC \to p_* g^* \scrC$ is an equivalence of stacks on $X$.
\end{proposition}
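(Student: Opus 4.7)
The plan is to verify the base-change map is an equivalence of stacks on $X$ by checking it on stalks at every $x \in X$. Write $y := f(x)$. Unwinding definitions,
\begin{equation*}
(f^*q_*\scrC)_x = (q_*\scrC)_y = \wtwocolim{V \ni y}{\scrC([0,1]\times V)},
\qquad
(p_*g^*\scrC)_x = \wtwocolim{U \ni x}{(g^*\scrC)([0,1]\times U)},
\end{equation*}
and the induced map is restriction along $[0,1]\times U \hookrightarrow [0,1]\times f^{-1}(V)$ for $U \subset f^{-1}(V)$.

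The main step is the local computation that, for a sufficiently small contractible neighborhood $V$ of $y$, the restriction $\scrC([0,1]\times V) \to \scrC_{(t,y)}$ is an equivalence for any $(t,y)\in [0,1]\times V$. To produce such a $V$, exploit compactness of $[0,1]\times\{y\}$ together with local constancy of $\scrC$: a Lebesgue-number argument yields a contractible $V\ni y$, a partition $0 = t_0 < t_1 < \cdots < t_n = 1$, and $\epsilon>0$ so that $\scrC$ is constant on each slab $W_i := (t_{i-1}-\epsilon,\, t_i+\epsilon)\times V$ (with the endpoints trimmed at $0$ and $1$). After shrinking $\epsilon$ we may assume $W_i\cap W_j = \emptyset$ for $|i-j|\ge 2$, so all triple intersections vanish.

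Now both $W_i$ and $W_i\cap W_{i+1}$ are contractible and $\scrC$ is constant on each, so Proposition \ref{constantstacks}(3) gives equivalences $\scrC(W_i)\xrightarrow{\sim}\scrC_{(t,y)}$ and $\scrC(W_i\cap W_{i+1})\xrightarrow{\sim}\scrC_{(t,y)}$ for any fixed base point. Stack descent presents $\scrC([0,1]\times V)$ as the 2-limit of
\begin{equation*}
\textstyle\prod_i \scrC(W_i) \;\rightrightarrows\; \prod_i \scrC(W_i \cap W_{i+1}),
\end{equation*}
with no higher terms. Under the stalk equivalences this 2-limit becomes that of a chain of equivalences between copies of $\scrC_{(t,y)}$, which is canonically equivalent to $\scrC_{(t,y)}$ itself.

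Since $g^*\scrC$ is locally constant by Proposition \ref{prop-pullpushlc}, the same analysis applied to $g^*\scrC$ on $[0,1]\times U$, for a contractible neighborhood $U\subset f^{-1}(V)$ of $x$, yields $(g^*\scrC)([0,1]\times U)\simeq (g^*\scrC)_{(t,x)} = \scrC_{(t,f(x))}$. Both stalks at $x$ are thus canonically $\scrC_{(t,f(x))}$ and the base-change map is the identity under these identifications. The main obstacle in this plan is the 2-descent bookkeeping in the preceding paragraph: one must check that the restriction functors commute up to coherent natural isomorphism when passed through the chosen stalk equivalences, and that the 2-limit over such a chain of equivalences really does collapse to a single copy. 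This is essentially a stack-level avatar of the contractibility of the nerve of a finite totally ordered set, and is the chief technical input of the proof.
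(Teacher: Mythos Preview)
Your approach is essentially the paper's: reduce to stalks, use compactness of $[0,1]\times\{y\}$ to find a neighborhood $V$ of $y$ and finitely many slabs over which $\scrC$ is constant, then glue along the interval direction.  The paper compresses your explicit $2$-descent computation into the single line ``by basic properties of the interval, $\scrC$ is constant over $[0,1]\times U_\beta$''; your chain-of-equivalences argument is a perfectly good way to justify that line.

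There is one genuine gap.  You choose $V$ \emph{contractible} and then invoke Proposition~\ref{constantstacks}(3) to conclude $\scrC(W_i)\simeq\scrC_{(t,y)}$.  But the proposition you are proving assumes nothing about $Y$ beyond being a topological space: no local contractibility.  Both halves of your appeal fail in general---a contractible $V$ need not exist, and Proposition~\ref{constantstacks}(3) is stated only for locally contractible ambient spaces.  The paper sidesteps this entirely: it takes the $U_\beta$ to be arbitrary opens over which $\scrC$ is constant (these exist by local constancy, no contractibility needed), and then argues that for a \emph{constant} stack on $I\times U_\beta$ the restriction to $J\times U_\beta$ is an equivalence for any subinterval $J\subset I$.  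This last fact needs only the constancy of $\scrC$ and the homotopy equivalence $J\times U_\beta\hookrightarrow I\times U_\beta$, not contractibility of $U_\beta$.

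The fix is easy: drop the contractibility of $V$, keep your slabs $W_i$ with $\scrC\vert_{W_i}$ constant, and replace the appeal to Proposition~\ref{constantstacks}(3) with the observation that each restriction $\scrC(W_i)\to\scrC(W_i\cap W_{i+1})$ is an equivalence because $\scrC\vert_{W_i}$ is constant and $W_i\cap W_{i+1}\hookrightarrow W_i$ is the product of an interval inclusion with $\mathrm{id}_V$.  Your chain then collapses exactly as you describe, and an inductive gluing shows $\scrC$ is constant on $[0,1]\times V$, from which $(q_*\scrC)_y\simeq\scrC_{(t,y)}$ follows by taking the colimit over a fundamental system of such $V$'s and of small intervals around $t$.
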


\begin{proof}
Let us first prove the following claim: every point $t \in [0,1]$ has a neighborhood $I \subset [0,1]$ such that $\scrC\big( [0,1] \times Y\big) \to \scrC\big( I \times Y\big)$ is an equivalence of categories.  There is an open cover of $[0,1] \times Y$ of the form $\{I_\alpha \times U_\beta\}$ such that $\scrC$ is constant over each chart $I_\alpha \times U_\beta$.  
By by basic properties of the interval, $\scrC$ is constant over $[0,1] \times U_\beta$, and for any subinterval $I \subset [0,1]$ the restriction map $\scrC\big( [0,1] \times U_\beta \big) \to \scrC(I \times U_\beta)$ is an equivalence of categories.  This implies the claim.  

Let $y$ be a point in $Y$, and let $t$ be a point in $[0,1]$.  Let $\{U\}$ be a fundamental system of neighborhoods of $y$.  According to the claim, we may pick for each $U$ an open set $I_U \subset [0,1]$ such that the restriction functor $\scrC\big([0,1] \times U\big) \to \scrC(I_U \times U)$ is an equivalence of categories.  We may choose the $I_U$ in such a way that the open sets $I_U \times U \subset [0,1] \times Y$ form a fundamental system of neighborhoods of $(t,y)$.  It follows that the natural restriction functor on stalks $(q_* \scrC)_y \to \scrC_{(t,y)}$ is an equivalence.

Now let $x$ be a point in $X$.  We have natural equivalences
$$
\begin{array}{ccccc}
\left(f^* q_* \scrC\right)_x & \cong & \left(q_* \scrC\right)_{f(x)} & \cong & \scrC_{\left( t,f\left(x\right)\right)}\\
\left(p_* g^* \scrC\right)_x & \cong & \left(g^* \scrC\right)_{(t,x)} & \cong & \scrC_{\left(t,f\left(x\right)\right)}
\end{array}
$$

This completes the proof.
\end{proof}

\begin{theorem}[Homotopy invariance]
\label{lcshotinv}
Let $X$ be a topological space, and let $\pi$ denote the projection map $[0,1] \times X \to X$.  Then $\pi_*$
and $\pi^*$ are inverse equivalences between the 2-category of locally constant stacks on $X$, and the 2-category of locally constant stacks on $[0,1] \times X$.
\end{theorem}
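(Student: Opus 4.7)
The plan is to verify that both $\pi^*$ and $\pi_*$ restrict to functors $\St_{lc}(X) \leftrightarrows \St_{lc}([0,1] \times X)$, and then that the adjunction unit $\eta\colon \scrC \to \pi_*\pi^*\scrC$ and counit $\epsilon\colon \pi^*\pi_*\scrD \to \scrD$ are equivalences on these subcategories. The preservation of local constancy by $\pi^*$ is immediate from proposition \ref{prop-pullpushlc}, so the work is concentrated in the remaining three points.

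To show $\pi_*$ preserves local constancy, fix a locally constant stack $\scrD$ on $[0,1]\times X$ and a point $x\in X$. By a tube-lemma argument: cover $[0,1]\times X$ by opens over which $\scrD$ is constant, shrink to product form $I_\alpha \times U_\beta$, and use compactness of $[0,1]\times\{x\}$ to extract a finite product cover $\{I_k \times U\}$ of $[0,1]\times U$ for some neighborhood $U\ni x$ on which $\scrD|_{I_k \times U}$ is constant for each $k$. By the ``basic properties of the interval'' argument invoked in the proof of proposition \ref{basechangelc}, $\scrD|_{[0,1]\times U}$ is then constant; say equivalent to $\bC_{[0,1]\times U}$ for some small $\bC$. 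Since pushforward commutes with restriction to open sets, and the pushforward of $\bC_{[0,1]\times U}$ along $[0,1]\times U \to U$ is the constant stack $\bC_U$, we conclude $(\pi_*\scrD)|_U \simeq \bC_U$, so $\pi_*\scrD$ is locally constant.

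For the unit and counit I would argue stalkwise. The stalk calculation carried out inside the proof of proposition \ref{basechangelc}—producing a fundamental system of neighborhoods $\{I_U \times U\}$ of $(t,x)$ with $\scrE([0,1]\times U) \xrightarrow{\simeq} \scrE(I_U \times U)$—yields, for any locally constant $\scrE$ on $[0,1]\times X$ and any $t\in[0,1]$, a natural equivalence
$$
(\pi_* \scrE)_x \;\xrightarrow{\;\simeq\;}\; \scrE_{(t,x)}
$$
induced by restriction. Specializing $\scrE = \pi^*\scrC$ gives $(\pi_*\pi^*\scrC)_x \simeq (\pi^*\scrC)_{(t,x)} = \scrC_x$, and by construction this is the stalk of the adjunction unit; hence $\eta$ is an equivalence on stalks, and therefore an equivalence of stacks. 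Specializing $\scrE = \scrD$ identifies $(\pi^*\pi_*\scrD)_{(t,x)} = (\pi_*\scrD)_x$ with $\scrD_{(t,x)}$ via the same equivalence, which agrees with the stalk of $\epsilon$; so $\epsilon$ is also a stalkwise, hence global, equivalence.

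The main obstacle is the preservation of local constancy by $\pi_*$: everything else is a direct application of the stalk computation already present in the proof of proposition \ref{basechangelc}, whereas the preservation step requires one to upgrade ``locally constant over a product cover of $[0,1]\times U$'' to ``globally constant over $[0,1]\times U$.'' This is a genuine monodromy-along-the-interval statement, analogous to the way proposition \ref{constantstacks} is used to trivialize locally constant stacks over contractible opens, and it is the only place where a nontrivial gluing argument is needed.
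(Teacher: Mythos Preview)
Your proof is correct and follows essentially the same strategy as the paper: reduce the unit and counit to stalk computations, and use the key fact (extracted from the proof of proposition \ref{basechangelc}) that the restriction map $(\pi_*\scrE)_x \to \scrE_{(t,x)}$ is an equivalence for any locally constant $\scrE$ on $[0,1]\times X$.

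One organizational remark: your separate tube-lemma argument that $\pi_*$ preserves local constancy is not actually needed, and the paper omits it. Once you know the counit $\pi^*\pi_*\scrD \to \scrD$ is an equivalence, local constancy of $\pi_*\scrD$ follows formally: choose any section $i_t\colon X \to [0,1]\times X$, so that $i_t^*\pi^* \cong \mathrm{id}$, and pull back the counit to get $\pi_*\scrD \cong i_t^*\scrD$, which is locally constant by proposition \ref{prop-pullpushlc}. So what you call ``the main obstacle'' is in fact a corollary of the stalk computation you already do; the genuine content is entirely in the equivalence $(\pi_*\scrE)_x \simeq \scrE_{(t,x)}$. The paper packages this a bit more formally, invoking proposition \ref{basechangelc} as a black box (with $f=i_x\colon\{x\}\hookrightarrow X$) together with proposition \ref{constantstacks}, rather than quoting the internals of its proof, but the substance is identical.
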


\begin{proof}
Let $\scrC$ be a locally constant stack on $X$ and let $\scrD$ be a locally constant stack on $I \times X$.

Let $x \in X$, let $i_x$ denote the inclusion map $\{x\} \hookrightarrow X$, let $j_x$ denote the inclusion map $[0,1] \cong [0,1] \times{x} \hookrightarrow [0,1] \times X$, and let $p$ denote the map $[0,1] \to \{x\}$.  By proposition \ref{basechangelc}, the natural map $(\pi_* \pi^* \scrC)_x = i_x^* \pi_* \pi^* \scrC \to p_* j_x^* \pi^* \scrC$ is an equivalence.  But $p_* j_x^* \pi^* \scrC \cong p_* p^* i_x^* \scrC = p_* p^* (\scrC_x)$.  The natural map $\scrC_x \to (\pi_* \pi^* \scrC)_x \cong p_* p^* (\scrC_x)$ coincides with the adjunction map $\scrC_x \to p_* p^* (\scrC_x)$.  Since $p^* \scrC_x$ is constant, $\scrC_x \to p_* p^* (\scrC_x)$ is an equivalence by proposition \ref{constantstacks}.  It follows that $\scrC \to \pi_* \pi^* \scrC$ is an equivalence of stacks.

Now let $(t,x) \in [0,1] \times X$.  There is an equivalence $(\pi^* \pi_* \scrD)_{(t,x)} \cong (\pi_* \scrD)_x$.  Once again proposition \ref{basechangelc} provides an equivalence $(\pi_* \scrD)_x \cong p_* j_x^* \scrD = j_x^* \scrD\big( [0,1]\big)$.  The locally constant stack $j_x^* \scrD$ is constant on $[0,1]$, so $j_x^* \scrD\big([0,1]\big) \cong \scrD_{(t,x)}$ by proposition \ref{constantstacks}.  It follows that $\pi^* \pi_* \scrD \to \scrD$ is an equivalence of stacks, completing the proof.

\end{proof}

\begin{corollary}
\label{corlcshotinv}
Let $X$ and $Y$ be topological spaces, and let $f:X \to Y$ be a homotopy equivalence.  
\begin{enumerate}
\item The 2-functor $f^*:\St_{lc}(Y) \to \St_{lc}(X)$ is an equivalence of 2-categories.
\item Let $\scrC$ be a locally constant stack on $Y$.  Then the natural functor $\scrC(Y) \to f^* \scrC(X)$ is an equivalence of categories.
\end{enumerate}
\end{corollary}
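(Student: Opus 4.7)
The plan is to reduce both parts to the homotopy invariance of Theorem~\ref{lcshotinv} via the standard principle that homotopic maps induce naturally isomorphic pullback 2-functors on locally constant stacks.

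First I would establish that the two endpoint inclusions $\iota_0,\iota_1\colon X\hookrightarrow[0,1]\times X$ induce naturally isomorphic pullback 2-functors $\iota_0^*\cong\iota_1^*$ on $\St_{lc}([0,1]\times X)$. Writing $\pi$ for the projection, the identities $\pi\iota_0=\pi\iota_1=\mathrm{id}_X$ give $\iota_i^*\pi^*\cong\mathrm{id}$ for $i=0,1$, and since Theorem~\ref{lcshotinv} says $\pi^*$ is an equivalence with quasi-inverse $\pi_*$, both $\iota_0^*$ and $\iota_1^*$ are naturally isomorphic to $\pi_*$. On global sections, for any $\scrD\in\St_{lc}([0,1]\times X)$ the counit $\pi^*\pi_*\scrD\simeq\scrD$ identifies $\scrD([0,1]\times X)=(\pi_*\scrD)(X)$ with $\iota_i^*\scrD(X)$ via the canonical restriction map, for each $i$.

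Given a homotopy $H\colon[0,1]\times X\to Y$ between $f_0=H\iota_0$ and $f_1=H\iota_1$, applying the previous step to $H^*\scrC$ for each $\scrC\in\St_{lc}(Y)$ produces a natural equivalence $f_0^*\scrC=\iota_0^*H^*\scrC\cong\iota_1^*H^*\scrC=f_1^*\scrC$; this assembles into a natural isomorphism of 2-functors $f_0^*\cong f_1^*\colon\St_{lc}(Y)\to\St_{lc}(X)$, compatible on global sections with the canonical pullback functors $\scrC(Y)\to f_i^*\scrC(X)$. As a special case, for any $h\colon Z\to Z$ homotopic to $\mathrm{id}_Z$ and any $\scrC\in\St_{lc}(Z)$, the canonical pullback $\scrC(Z)\to h^*\scrC(Z)$ factors (by functoriality of pullback applied to $h=H\iota_0$ and $\mathrm{id}_Z=H\iota_1$) as a composition $\scrC(Z)\simeq H^*\scrC([0,1]\times Z)\simeq h^*\scrC(Z)$ of the two equivalences from the previous paragraph, and is therefore itself an equivalence.

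Part (1) is then immediate: choosing a homotopy inverse $g$ of $f$, the relations $fg\simeq\mathrm{id}_Y$ and $gf\simeq\mathrm{id}_X$ give $g^*f^*=(fg)^*\cong\mathrm{id}$ on $\St_{lc}(Y)$ and $f^*g^*=(gf)^*\cong\mathrm{id}$ on $\St_{lc}(X)$, so $f^*$ and $g^*$ are inverse equivalences. For Part (2), I would apply the 2-out-of-6 property for equivalences of categories to the chain of canonical pullback functors
\[
\scrC(Y)\xrightarrow{\,u\,}f^*\scrC(X)\xrightarrow{\,v\,}g^*f^*\scrC(Y)\xrightarrow{\,w\,}f^*g^*f^*\scrC(X);
\]
by functoriality, $vu$ is the canonical map $\scrC(Y)\to(fg)^*\scrC(Y)$ and $wv$ is the canonical map $f^*\scrC(X)\to(gf)^*f^*\scrC(X)$, both of which are equivalences by the special case above, whence $u$ is an equivalence. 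The main obstacle I anticipate is the naturality and compatibility bookkeeping in the first two paragraphs---ensuring that the equivalence $\iota_0^*\cong\iota_1^*$ extracted from $\pi^*$ really does identify the canonical pullback maps on global sections with the zigzag of restrictions to $\{i\}\times X$; once that is in hand the remaining deductions are formal.
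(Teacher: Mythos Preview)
Your argument for Part~(1) is essentially identical to the paper's: both establish $\iota_0^*\cong\iota_1^*\cong\pi_*$ from Theorem~\ref{lcshotinv}, deduce that homotopic maps induce equivalent pullback 2-functors, and conclude that $f^*$ and $g^*$ are mutually inverse.

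For Part~(2) you take a genuinely different route. The paper observes that global sections are corepresented by the terminal (constant trivial-category) stack $*$: one has $\scrC(Y)\cong\Hom_{\St(Y)}(*,\scrC)$ and $f^*\scrC(X)\cong\Hom_{\St(X)}(*,f^*\scrC)$, and since $f^*$ is an equivalence of 2-categories by Part~(1) and $f^*(*)\cong *$, the induced functor on hom-categories is an equivalence. This is a one-line deduction from Part~(1). Your approach instead first proves the ``special case'' that a self-map homotopic to the identity induces an equivalence on global sections, and then runs a 2-out-of-6 argument on the chain $\scrC(Y)\to f^*\scrC(X)\to g^*f^*\scrC(Y)\to f^*g^*f^*\scrC(X)$. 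This is correct, and your special case is exactly the kind of statement one would want to verify anyway to be sure the equivalence in Part~(2) is the \emph{canonical} comparison map (a point the paper's proof leaves implicit). The trade-off is that the representability trick is shorter and more conceptual, while your argument is more explicit about which maps are being shown to be equivalences and does not rely on identifying $f^*$ of the terminal stack or on the essentially-fully-faithful part of Part~(1). The bookkeeping you flag---that the zigzag through $H^*\scrC([0,1]\times Z)$ really does recover the canonical pullback for $h$---is handled by the functoriality $\scrC(Z)\to H^*\scrC([0,1]\times Z)\to\iota_1^*H^*\scrC(Z)=\scrC(Z)$ being the identity, which forces the first arrow to be an equivalence once you know the second one is.
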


\begin{proof}
Let $g:Y \to X$ be a homotopy inverse to $f$, and let $H: [0,1] \times X \to X$ be a homotopy between $g \circ f$ and $1_X$.  Let $\pi$ denote the projection map $[0,1] \times X \to X$, and for $t \in [0,1]$ let $i_t$ denote the map $X \to [0,1] \times X: x \mapsto (t,x)$.  Then $i_t^* \cong \pi_*$ by theorem \ref{lcshotinv}.  It follows that $i_0^* \cong i_1^*$, and that $(H \circ i_0)^* \cong (H \circ i_1)^*$.    But $H \circ i_0 = g \circ f$ and $H \circ i_1 = 1_X$, so $f^* \circ g^* \cong 1_{\St_{lc}(X)}$.  Similarly using a homotopy $G:I \times Y \to Y$ we may construct an equivalence $g^* \circ f^* \cong 1_{\St_{lc}(Y)}$.  This proves the first assertion.  

To prove the second assertion, let $*$ denote the trivial category.  By (1), $\Hom_{\St(Y)}(*,\scrC) \cong \Hom_{\St(X)}\big( *, f^*\scrC\big)$.  But $\scrC(Y) \cong \Hom(*,\scrC)$ and $f^*\scrC(X) \cong \Hom(*,f^*\scrC)$.  This completes the proof.
\end{proof}

\begin{theorem}
\label{lcscrit}
Let $X$ be a locally contractible space, and let $\scrC$ be a stack of categories on $X$.  The following are equivalent:
\begin{enumerate}
\item $\scrC$ is locally constant.

\item If $U$ and $V$ are two open subsets of $X$ with $V \subset U$, and the inclusion map $V \hookrightarrow U$ is a homotopy equivalence, then the restriction functor $\scrC(U) \to \scrC(V)$ is an equivalence of categories.

\item If $U$ and $V$ are any two contractible open subsets of $X$, and $V \subset U$, then $\scrC(U) \to \scrC(V)$ is an equivalence of categories.
\item There exists a collection $\{U_i\}$ of contractible open subsets of $X$ such that each point $x \in X$ has a fundamental system of neighborhoods of the form $U_i$, and such that $\scrC(U_i) \to \scrC(U_j)$ is an equivalence of categories whenever $U_j \subset U_i$.
\end{enumerate}

\end{theorem}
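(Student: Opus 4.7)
The strategy is to prove the cyclic chain of implications $(1) \Rightarrow (2) \Rightarrow (3) \Rightarrow (4) \Rightarrow (1)$, relying on the homotopy-invariance result (Corollary \ref{corlcshotinv}) for the first step and on a stalkwise comparison with the constant stack for the last.

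For $(1) \Rightarrow (2)$: if $\scrC$ is locally constant on $X$, then $\scrC\vert_U$ is locally constant on $U$ by Proposition \ref{prop-pullpushlc}, and since the inclusion $V \hookrightarrow U$ is a homotopy equivalence, Corollary \ref{corlcshotinv}(2) applied to this inclusion gives that $\scrC(U) = \scrC\vert_U(U) \to \scrC\vert_U(V) = \scrC(V)$ is an equivalence of categories. The implication $(2) \Rightarrow (3)$ is immediate, since any inclusion of one contractible open set into another is a homotopy equivalence. The implication $(3) \Rightarrow (4)$ is also immediate: by local contractibility of $X$, the collection of all contractible open subsets of $X$ forms a basis, and (3) is exactly the condition required of this collection in (4).

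The substantive step is $(4) \Rightarrow (1)$. The sets $\{U_i\}$ from (4) form an open cover of $X$, and we will show $\scrC\vert_{U_i}$ is equivalent to a constant stack for each $i$. Fix $i$ and set $\bC := \scrC(U_i)$. The restriction functors $\scrC(U_i) \to \scrC(W)$ for open $W \subset U_i$ assemble into a morphism of prestacks $\bC_{p;U_i} \to \scrC\vert_{U_i}$; by the universal property of stackification this induces a morphism $\varphi \colon \bC_{U_i} \to \scrC\vert_{U_i}$ of stacks. To see that $\varphi$ is an equivalence it suffices to check on stalks at each $y \in U_i$. By Proposition \ref{constantstacks}(2), $(\bC_{U_i})_y$ is equivalent to $\bC$. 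The stalk $(\scrC\vert_{U_i})_y$ is the filtered 2-colimit of the categories $\scrC(W)$ over neighborhoods $W$ of $y$ in $U_i$; restricting to the cofinal subsystem of those $U_j \subset U_i$ containing $y$, every transition map $\scrC(U_i) \to \scrC(U_j)$ is an equivalence by hypothesis (4). A filtered 2-colimit of a system whose transition maps are all equivalences is equivalent to any of its terms, so the induced map $\bC \to (\scrC\vert_{U_i})_y$ is an equivalence, and hence $\varphi$ is an equivalence of stacks.

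The main obstacle is the $(4) \Rightarrow (1)$ step, specifically the identification of the stalk of $\scrC\vert_{U_i}$ with $\bC$; this is where the cofinality argument and the equivalence hypothesis from (4) combine. Everything else reduces either to homotopy invariance via Corollary \ref{corlcshotinv} or to the local contractibility of $X$.
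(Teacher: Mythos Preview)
Your proof is correct and follows essentially the same route as the paper: the chain $(1)\Rightarrow(2)\Rightarrow(3)\Rightarrow(4)\Rightarrow(1)$, with Corollary~\ref{corlcshotinv} driving the first implication and a stalkwise comparison with the constant stack $\big(\scrC(U_i)\big)_{U_i}$ for the last. Your $(4)\Rightarrow(1)$ argument is in fact a bit more explicit than the paper's---you spell out the cofinality of the $U_j\subset U_i$ containing $y$ and the fact that a filtered 2-colimit along equivalences is equivalent to each term---whereas the paper simply asserts that $\scrC(U)\to\scrC_x$ is an equivalence and concludes.
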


\begin{proof}
Suppose $\scrC$ is locally constant, and let $U$ and $V$ be as in condition (2).  Then corollary \ref{corlcshotinv} implies that the restriction functor $\scrC(U) \to \scrC(V)$ is an equivalence of categories, so condition (1) implies condition (2).  Clearly condition (2) implies condition (3), and condition (3) implies condition (4).  Let us show condition (4) implies condition (1).

Suppose $\scrC$ satisfies condition (4).  To show $\scrC$ is locally constant it is enough to show its restriction to each of the distinguished charts in $\{U_i\}$ is constant.  Let $U \subset X$ be such a chart.  Since each point $x \in U$ has a fundamental system of neighborhoods $\{V \}$ from $\{U_i\}$, and since $\scrC(U) \to \scrC(V)$ is an equivalence for each $V$, the map $\scrC(U) \to \scrC_x$ is an equivalence for each $U$.  It follows that the natural map from the constant 
stack $\big(\scrC(U)\big)_U$ to $\scrC\vert_U$ is an equivalence on stalks, and therefore an equivalence.
\end{proof}

\subsection{Direct images and base change}

Let $f$ be a continuous map between locally contractible spaces.  As an application of theorem \ref{lcscrit}, we may give easy proofs of some basic properties of the direct image $f_*$ of locally constant stacks.

\begin{proposition}
Let $X$ and $Y$ be locally contractible spaces.  Let $f:X \to Y$ be a locally trivial fiber bundle, or more generally a Serre fibration.  Let $\scrC$ be a locally constant stack on $X$.  Then $f_* \scrC$ is locally constant on $Y$.
\end{proposition}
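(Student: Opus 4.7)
The plan is to apply Theorem \ref{lcscrit} to the stack $f_*\scrC$ on $Y$ in the form of condition (4). Since $Y$ is locally contractible, I can choose a collection $\{V_i\}$ of contractible open subsets of $Y$ that forms a fundamental system of neighborhoods at each point. It then suffices to check that whenever $V_j \subset V_i$, the restriction $(f_*\scrC)(V_i) \to (f_*\scrC)(V_j)$, i.e.\ the restriction $\scrC(f^{-1}V_i) \to \scrC(f^{-1}V_j)$, is an equivalence of categories.

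Since $\scrC$ is itself a locally constant stack on the locally contractible space $X$, condition (2) of Theorem \ref{lcscrit} applied to $\scrC$ reduces this to the purely topological claim: for contractible open subsets $V \subset U$ of $Y$, the induced open inclusion $f^{-1}V \hookrightarrow f^{-1}U$ is a homotopy equivalence. Fixing a basepoint $y_0 \in V$ and setting $F = f^{-1}(y_0)$, I would use the Serre fibration property of $f$ together with the contractibility of $U$ (respectively $V$) to exhibit both $F \hookrightarrow f^{-1}U$ and $F \hookrightarrow f^{-1}V$ as homotopy equivalences; the desired homotopy equivalence between $f^{-1}V$ and $f^{-1}U$ then follows by two-out-of-three applied to the commuting triangle of inclusions.

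The main obstacle is precisely this last topological step. Lifting a contraction of $U$ to $y_0$ via the homotopy lifting property, together with the long exact sequence of homotopy groups of a Serre fibration over a contractible base, shows at once that $F \hookrightarrow f^{-1}U$ is a weak homotopy equivalence; but Theorem \ref{lcscrit} (2) as stated demands a genuine homotopy equivalence. In the locally trivial fiber bundle case this is unproblematic: I refine $\{V_i\}$ to a basis of contractible opens each contained in some trivializing chart, so that $f^{-1}V_i \cong V_i \times F$, and the inclusion $V_j \times F \hookrightarrow V_i \times F$ is manifestly a homotopy equivalence. For arbitrary Serre fibrations one constructs a homotopy inverse directly, by repeated application of the HLP to the contractions of $U$ and $V$, obtaining compatible retractions of $f^{-1}U$ and $f^{-1}V$ onto $F$ and gluing them to produce an honest homotopy between the composites and the identity. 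With this lemma in hand, the proposition follows immediately from the two applications of Theorem \ref{lcscrit} indicated above.
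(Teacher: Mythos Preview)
Your approach is essentially the same as the paper's. The paper invokes condition (3) of Theorem~\ref{lcscrit} (all pairs of contractible opens) rather than condition (4), and cites Corollary~\ref{corlcshotinv}(2) directly for the step $\scrC\big(f^{-1}(U)\big)\to\scrC\big(f^{-1}(V)\big)$ rather than rerouting through condition (2) of Theorem~\ref{lcscrit}; but these are cosmetic differences, and the logical skeleton is identical.

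One remark: the paper simply asserts that for a Serre fibration the inclusion $f^{-1}(V)\hookrightarrow f^{-1}(U)$ is a homotopy equivalence, without further comment. You are right to flag the distinction between weak and genuine homotopy equivalence here---for Hurewicz fibrations the claim is classical, and for locally trivial bundles it is trivial as you note, but for bare Serre fibrations over arbitrary contractible opens it requires the sort of hands-on HLP argument you sketch. Your treatment of this point is more careful than the paper's.
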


\begin{proof}
By theorem \ref{lcscrit}, it suffices to show that the restriction functor $f_* \scrC(U) \to f_* \scrC(V)$, which is equal to the restriction functor $\scrC\big(f^{-1}(U)\big) \to \scrC\big(f^{-1}(V)\big)$, is an equivalence of categories whenever $V \subset U \subset X$ are open sets and $U$ and $V$ are contractible.  Since $f$ is a Serre fibration, the inclusion map $f^{-1}(V) \hookrightarrow f^{-1}(U)$ is a homotopy equivalence, and the proposition follows from corollary \ref{corlcshotinv}.
\end{proof}

\begin{theorem}
\label{serrebasechange}
Let $X$ and $S$ be locally contractible spaces.  Let $p:X \to S$ be a locally trivial fiber bundle, or more generally a Serre fibration.  Let $T$ be another locally contractible space, and let $f:T \to S$ be any continuous function.  Set $Y = X \times_S T$, and let $g:Y \to X$ and $q:Y \to T$ denote the projection maps.
$$\xymatrix{
Y \ar[r]^g \ar[d]_q & X \ar[d]^p\\
T \ar[r]_f & S
}
$$
Let $\scrC$ be a locally constant stack on $X$.  The base-change map $f^* p_* \scrC \to q_* g^* \scrC$ is an equivalence of stacks.
\end{theorem}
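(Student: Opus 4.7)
The plan is to show the base-change map is an equivalence on stalks. The map $q$ is a Serre fibration (as a pullback of $p$), and $g^*\scrC$ is locally constant on $Y$ by proposition \ref{prop-pullpushlc}; so the preceding proposition makes both $p_*\scrC$ on $S$ and $q_*g^*\scrC$ on $T$ locally constant, and therefore so is $f^*p_*\scrC$. Since $T$ is locally contractible, theorem \ref{lcscrit} shows I may compute the stalks of any of these locally constant stacks at a point $t \in T$ by evaluating on a sufficiently small contractible open neighborhood of $t$.

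Fix $t \in T$, set $s = f(t)$, and use the continuity of $f$ together with the local contractibility of $S$ and $T$ to pick a contractible open neighborhood $V$ of $s$ in $S$ and a contractible open neighborhood $W$ of $t$ in $T$ with $f(W) \subset V$. Then $(f^*p_*\scrC)_t \simeq (p_*\scrC)(V) = \scrC(p^{-1}(V))$ and $(q_*g^*\scrC)_t \simeq (g^*\scrC)(q^{-1}(W))$, and the base-change map on stalks corresponds, under these identifications, to the restriction functor induced by $g:q^{-1}(W) \to p^{-1}(V)$. To compare the two sides I would pass to the fiber $F = p^{-1}(s)$: since $p$ is a Serre fibration over the contractible base $V$ the inclusion $F \hookrightarrow p^{-1}(V)$ is a homotopy equivalence, so corollary \ref{corlcshotinv}(2) yields $\scrC(p^{-1}(V)) \simeq (\scrC\vert_F)(F)$; since $q$ is a Serre fibration over the contractible $W$ and $g$ restricts to a homeomorphism $q^{-1}(t) \to F$, the same corollary yields $(g^*\scrC)(q^{-1}(W)) \simeq (\scrC\vert_F)(F)$. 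The base-change map fits into a commutative triangle with these two equivalences, hence is itself an equivalence on stalks.

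The main obstacle is not the homotopy-theoretic input, which is neatly packaged by corollary \ref{corlcshotinv}, but rather the diagram-chase verifying that the base-change map, read through the canonical identifications of stalks with sections on contractible opens and thence with sections on the fiber $F$, really does agree with the tautological restriction functor associated to $F \hookrightarrow p^{-1}(V)$. This compatibility follows from the naturality of the stalk-computation functors and of the homotopy-invariance equivalence of corollary \ref{corlcshotinv}, but deserves a careful check.
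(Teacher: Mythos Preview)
Your argument is correct and follows essentially the same route as the paper: reduce to contractible bases, use that both sides are then constant stacks, and invoke the homotopy-invariance statement (corollary \ref{corlcshotinv}) to identify each side with sections of $\scrC$ over a common space. The paper phrases the reduction as ``assume $S$ and $T$ contractible'' and compares global sections via $\scrC(X)$, applying corollary \ref{corlcshotinv} to the maps $g:Y\to X$ and $f:T\to S$; you instead work stalkwise at $t$ and route both sides through the fiber $F=p^{-1}(s)$, but this is only a cosmetic difference. Your closing remark about the diagram chase needed to match the base-change map with the tautological restriction is well taken---the paper is silent on this point.
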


\begin{proof}
The statement is local on $T$ and $S$, so we may assume both $T$ and $S$ are contractible.  Since $p$ and $q$ are Serre fibrations the stacks $f^* p_* \scrC$ and $q_* g^* \scrC$ are locally constant, and therefore constant.  To show that the base-change map is an equivalence of stacks it is enough to show that the functor $f^* p_* \scrC(T) \to q_* g^* \scrC(T)$ is an equivalence of categories.  We have $q_* g^* \scrC(T) = g^* \scrC(Y)$, which by corollary $\ref{corlcshotinv}$ is equivalent to $\scrC(X)$.  Furthermore, corollary \ref{corlcshotinv} shows that $f^* p_* \scrC(T) \cong p_*\scrC(S) = \scrC(X)$.  This completes the proof.
\end{proof}

\section{Constructible stacks}
\label{sec-cs}

In this section we introduce constructible stacks.  First we review the topologically stratified spaces of \cite{ih2}.  The definition is inductive: roughly, a stratification $S$ of a space $X$ is a decomposition into pieces called ``strata,'' such that the decomposition looks locally like the cone on a simpler (lower-dimensional) stratified space.  A stack on $X$ is called ``$S$-constructible'' if its restriction to each stratum is locally constant.  This definition is somewhat unwieldy, and we give a more usable criterion in theorem \ref{cscrit}, analogous to theorem \ref{lcscrit} for locally constant stacks: a stack is $S$-constructible if and only if the restriction from a ``conical'' open set to a smaller conical open set is an equivalence of categories.  This criterion is a consequence of a stratified-homotopy invariance statement (corollary \ref{corcshotinv}).

\subsection{Topologically stratified spaces}

\begin{definition}
\label{topstratspace}
Let $X$ be a paracompact Hausdorff space.

A $0$-dimensional topological stratification of $X$ is a homeomorphism between $X$ and a countable discrete set of points.  For $n > 0$, an $n$-dimensional \emph{topological stratification} of $X$ is a filtration
$$\emptyset = X_{-1} \subset X_0 \subset X_1 \subset \ldots \subset X_n = X$$
of $X$ by closed subsets $X_i$, such that for each $i$ and for each point $x \in X_i - X_{i-1}$,
there exists a neighborhood $U$ of $x$, a compact Hausdorff space $L$, an $(n - i - 1)$-dimensional topological stratification
$$\emptyset = L_{-1} \subset L_1 \subset L_2 \subset \ldots \subset L_{n-i-1} = L$$
of $L$, and a homeomorphism $CL \times \bbR^i \cong U$ that takes each $CL_j \times \bbR^i$ homeomorphically to $U \cap X_j$.  
Here $CL = [0,1) \times L / \{0\} \times L$
is the open cone on $L$ if $L$ is non-empty; if $L$ is empty then let $CL$ be a one-point space.

A finite dimensional \emph{topologically stratified space} is a pair $(X,S)$ where $X$ is a paracompact Hausdorff space and $S$ is an $n$-dimensional topological stratification of $X$ for some $n$.
\end{definition}

Let $(X,S)$ be a topologically stratified space with filtration
$$ \emptyset = X_{-1} \subset X_0 \subset X_1 \subset \ldots \subset X_n = X$$
Note the following immediate consequences of the definition:
\begin{enumerate}
\item If $X_i - X_{i-1}$ is not empty, then it is an $i$-dimensional topological manifold.

\item If $U \subset X$ is open then the filtration $U_{-1} \subset U_0 \subset U_1 \subset \ldots$
of $U$, where $U_i = U \cap X_i$, is a topological stratification.
\end{enumerate}

We will call the connected components of $X_i - X_{i-1}$, or unions of them, $i$-dimensional \emph{strata}.  
We will call the neighborhoods $U$ homeomorphic to cones ``conical neighborhoods'':

\begin{definition}
Let $(X,S)$ be an $n$-dimensional topologically stratified space.  An open set $U \subset X$ is called a \emph{conical open subset} of $X$ if $U$ is homeomorphic to $CL \times \bbR^i$ for some $L$ as in definition \ref{topstratspace}.
\end{definition}

\begin{remark}
By definition, every point in a topologically stratified space has an conical neighborhood $CL \times \bbR^k$.  One of the quirks of topological stratifications (as opposed to e.g. Whitney stratifications) is that the space $L$ is not uniquely determined up to homeomorphism: there even exist non-homeomorphic manifolds $L_1$ and $L_2$ such that $CL_1 \cong CL_2$ (see \cite{milnor}).
\end{remark}

The following definition, from \cite{ih2}, is what is usually meant by ``stratified map.''

\begin{definition}
\label{def-stratifiedmaps}
Let $(X,S)$ and $(Y,T)$ be topologically stratified spaces.  A continuous map $f:X \to Y$ is \emph{stratified} if it satisfies the following two conditions:
\begin{enumerate}
\item  For any connected component $C$ of any stratum $Y_k - Y_{k-1}$, the set $f^{-1}(C)$
is a union of connected components of strata of $X$.
\item  For each point $y \in Y_i - Y_{i-1}$ there exists a neighborhood $U$ of $x$ in $Y_i$, a topologically stratified space 
$$F = F_k \supset F_{k-1} \supset \ldots \supset F_{-1} = \emptyset$$
and a filtration-preserving homeomorphism 
$$F \times U \cong f^{-1}(U)$$
that commutes with the projection to $U$.
\end{enumerate}
\end{definition}

We need a much broader definition:

\begin{definition}
\label{def-loosestratifiedmaps}
Let $(X,S)$ and $(Y,T)$ be topologically stratified spaces.  A continuous map $f:X \to Y$ is called \emph{stratum-preserving} if for each $k$, and each connected component $Z \subset X_k - X_{k-1}$, the image $f(Z)$ is contained in $Y_\ell - Y_{\ell -1}$ for some $\ell$.
\end{definition}

\begin{definition}
Let $(X,S)$ and $(Y,T)$ be topologically stratified spaces, and let $f$ and $g$ be two stratum-preserving maps from $X$ to $Y$.  We say $f$ and $g$ are \emph{homotopic relative to the stratifications} if there exists a homotopy $H:[0,1] \times X \to Y$ between $f$ and $g$ such that the map $H(t,-):X \to Y$ is stratum-preserving for every $t \in [0,1]$.
\end{definition}

A slightly irritating feature of this definition is that the space $[0,1] \times X$ cannot be stratified without
treating the boundary components $\{0\} \times X$ and $\{1\} \times X$ differently.  We may take care of this by using the open interval:
if $(X,S)$ and $(Y,T)$ are topologically stratified spaces, then we may endow $(0,1) \times X$ with a topological stratification by setting $\big((0,1) \times X\big)_i = (0,1) \times X_{i-1}$.  Note the following
\begin{enumerate}
\item Let $H:[0,1] \times X \to Y$ be a stratified homotopy.  The restriction of this map to $(0,1) \times X$
is stratum-preserving.
\item Let $f$ and $g$ be two stratum-preserving maps.  Then $f$ and $g$ are homotopic relative to the stratifications if and only if there exists a stratum-preserving map $H:(0,1) \times X \to Y$ such that $f(-) = H(t_0,-)$ and $g(-) = H(t_1,-)$ for some $t_0,t_1 \in (0,1)$.
\end{enumerate}

\begin{definition}
Let $(X,S)$ and $(Y,T)$ be topologically stratified spaces.  Let $f:X \to Y$ be a stratum-preserving map.  Call $f$ a \emph{stratified homotopy equivalence} if there is a stratum-preserving map $Y \to X$ such that the composition $g \circ f$ is stratified homotopic to the identity map $1_X$ of $X$, and $f \circ g$ is stratified homotopic to the identity map $1_Y$ of $Y$.
\end{definition}

Note that a ``stratified homotopy equivalence'' $f$ need not be a stratified map in the sense of definition \ref{def-stratifiedmaps}, but only stratum-preserving.

\subsection{Constructible stacks}

\begin{definition}
\label{def-cs}
Let $(X,S)$ be a topologically stratified space and let $\scrC$ be a stack on $X$.  $\scrC$ is called \emph{constructible} with respect to $S$ if, for each $k$,  $i_k^* \scrC$ is locally constant on $X_k- X_{k-1}$, where $i_k:X_k - X_{k-1} \hookrightarrow X$ denotes the inclusion of the $k$-dimensional stratum into $X$.

Let $\St_S(X)$ denote the full subcategory of the 2-category $\St(X)$ of stacks on $X$ whose objects are the $S$-constructible stacks.
\end{definition}

The pullback of a constructible stack is constructible:

\begin{proposition}
Let $(X,S)$ and $(Y,T)$ be two topologically stratified spaces.  Let $f:X \to Y$ be a stratum-preserving map.
If $\scrC$ is a $T$-constructible stack on $Y$, then $f^*\scrC$ is $S$-constructible on $X$.
\end{proposition}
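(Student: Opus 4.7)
The plan is to verify the defining condition of $S$-constructibility stratum by stratum, reducing to the already-proven fact (Proposition \ref{prop-pullpushlc}) that pullback preserves local constancy.

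First, I fix an integer $k$ and consider the inclusion $i_k : X_k - X_{k-1} \hookrightarrow X$; I need to show that $i_k^* f^* \scrC$ is locally constant on $X_k - X_{k-1}$. Since local constancy is a local condition, it suffices to check this on each connected component $Z$ of $X_k - X_{k-1}$ separately. The stratum-preserving hypothesis on $f$ gives, for each such $Z$, an integer $\ell = \ell(Z)$ such that $f(Z) \subset Y_\ell - Y_{\ell - 1}$. Writing $j_\ell : Y_\ell - Y_{\ell - 1} \hookrightarrow Y$ for the inclusion, the restriction $f|_Z$ factors as a continuous map $f_Z : Z \to Y_\ell - Y_{\ell - 1}$ followed by $j_\ell$.

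Next, I use functoriality of pullback: on $Z$ we have the equivalence $(i_k|_Z)^* f^* \scrC \simeq f_Z^* (j_\ell^* \scrC)$. By the $T$-constructibility of $\scrC$, the stack $j_\ell^* \scrC$ is locally constant on $Y_\ell - Y_{\ell - 1}$. Proposition \ref{prop-pullpushlc} then implies that $f_Z^* (j_\ell^* \scrC)$ is locally constant on $Z$. Running this argument over all connected components $Z$ of $X_k - X_{k-1}$ shows $i_k^* f^* \scrC$ is locally constant on $X_k - X_{k-1}$, and since $k$ was arbitrary, $f^* \scrC$ is $S$-constructible.

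There is no real obstacle here: the main content is simply the observation that ``stratum-preserving'' is exactly the condition that lets the pullback be analyzed one stratum at a time, after which Proposition \ref{prop-pullpushlc} finishes the job. The only minor care needed is in the bookkeeping that $\ell$ may depend on the component $Z$, which is why one works component by component rather than attempting a single map of stratum filtrations.
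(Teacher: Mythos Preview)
Your proof is correct and follows essentially the same approach as the paper: both reduce to connected components of each $X_k - X_{k-1}$, use the stratum-preserving hypothesis to factor $f$ restricted to a component through the inclusion of some $Y_\ell - Y_{\ell-1}$, and then invoke Proposition \ref{prop-pullpushlc}.
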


\begin{proof}
We have to show that $f^* \scrC$ is locally constant on $X_k - X_{k-1}$.  It is enough to show it is locally constant on each connected component.  Let $C$ be a component of $X_k - X_{k-1}$, and let $i:C \to X$ be the inclusion.  Then $i^*f^* \scrC \cong (f \circ i)^* \scrC$.  But $f \circ i: C \to Y$ factors
through $j:Y_{\ell} - Y_{\ell -1} \to Y$ for some $\ell$, so $i^* f^* \scrC$ is obtained from pulling back
$j^* \scrC$ on $Y_{\ell} - Y_{\ell -1}$ to $C$.  By proposition \ref{prop-pullpushlc}, this is locally constant on $C$.
\end{proof}

\begin{proposition}
\label{basechangecs}
Let $(X,S)$ be a topologically stratified space, and let $C$ be a connected stratum of $X$.  Let $i:C \hookrightarrow X$ denote the inclusion map.  Let $p:(0,1) \times C \to C$ and $q:(0,1) \times X \to X$ denote the projection maps, and let $j$ denote the inclusion map $(id,i):(0,1) \times C \hookrightarrow X$.
$$\xymatrix{
(0,1) \times C \ar[r]^j \ar[d]_p & (0,1) \times X \ar[d]^q \\
C \ar[r]_i & X}$$
Endow $(0,1) \times X$ with a topological stratification by setting $\big( (0,1) \times X \big)_k = (0,1) \times X_{k-1}$.  Let $\scrC$ be a stack on $(0,1) \times X$ constructible with respect to this stratification.  Then the base-change map $i^* q_*\scrC \to p_* j^*\scrC$ is an equivalence of stacks.
\end{proposition}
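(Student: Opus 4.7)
My plan is to follow the template of Proposition \ref{basechangelc} and check that the base-change map is an equivalence on stalks. Fix a point $c \in C$ and any $t \in (0,1)$; I will identify both $(i^* q_* \scrC)_c$ and $(p_* j^* \scrC)_c$ with the stalk $\scrC_{(t, i(c))}$, with the base-change map inducing this identification.

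For the right-hand stalk, the subspace $(0,1) \times C$ is a union of components of strata of $(0,1) \times X$ under the shifted stratification, so $j^* \scrC$ is locally constant on $(0,1) \times C$. Applying Proposition \ref{basechangelc} to this locally constant stack yields $(p_* j^* \scrC)_c \cong (j^* \scrC)_{(t, c)} \cong \scrC_{(t, i(c))}$. For the left-hand stalk, $(i^* q_* \scrC)_c = \varinjlim_V \scrC\bigl((0,1) \times V\bigr)$ as $V$ ranges over conical open neighborhoods of $i(c)$ in $X$, and I want to match this with $\scrC_{(t, i(c))} = \varinjlim_{I, V} \scrC(I \times V)$ (where $I \ni t$ is an open subinterval of $(0,1)$ and $V$ is conical, forming a cofinal system of neighborhoods of $(t,i(c))$). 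Everything then reduces to the following \emph{key claim}: for every conical open neighborhood $V$ of $i(c)$ in $X$ and every open subinterval $I \subset (0,1)$, the restriction $\scrC\bigl((0,1) \times V\bigr) \to \scrC(I \times V)$ is an equivalence of categories. Given the key claim, the two colimits agree and the base-change map is identified with the natural equivalence of stalks.

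The key claim is proved by induction on the depth of the stratification of $V$. The base case is depth zero, i.e.\ $V \subset C$: then $V \cong \bbR^k$ and $(0,1) \times V$ is a single stratum of $(0,1) \times X$, so $\scrC|_{(0,1) \times V}$ is locally constant and Corollary \ref{corlcshotinv} applies to the homotopy equivalence $I \times V \hookrightarrow (0,1) \times V$. For the inductive step, I write $V \cong CL \times \bbR^k$ with cone point $c_0$, choose some $\tau \in (0,1)$, and cover $V$ by $V^\partial = V \smallsetminus \bigl(\{c_0\} \times \bbR^k\bigr)$ and a smaller open subcone $V'$ around $\{c_0\} \times \bbR^k$, with intersection $V^\partial \cap V' \cong (0, \tau) \times L \times \bbR^k$. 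Both $V^\partial$ and $V^\partial \cap V'$ have strictly smaller depth, so the inductive hypothesis handles those pieces of the stack-descent $2$-limit; the remaining restriction $\scrC\bigl((0,1) \times V'\bigr) \to \scrC(I \times V')$ is treated by iterating the same decomposition with smaller and smaller $\tau$ and passing to the colimit, in which the cone piece collapses onto the stalk $\scrC_{(t, i(c))}$.

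The main obstacle is the $2$-categorical bookkeeping: stack descent yields a $2$-limit rather than a $1$-limit, so one must track not only sections but also the functors and natural transformations comprising the descent data, and verify that the inductive equivalences are compatible with this full $2$-categorical structure. A secondary subtlety arises in the iterated-shrinking argument for the cone piece, where one must check that the $2$-colimit of sections over a decreasing family of conical neighborhoods of $(t, i(c))$ genuinely computes $\scrC_{(t, i(c))}$.
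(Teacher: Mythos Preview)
Your overall strategy---checking the base-change map on stalks and identifying both sides with $\scrC_{(t,i(c))}$---is exactly what the paper does, and your treatment of the right-hand stalk via local constancy of $j^*\scrC$ matches the paper's second displayed line. The divergence is entirely in how you handle the left-hand stalk $(q_*\scrC)_{i(c)}$.

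The paper does not induct on depth at all. It simply asserts that the argument of Proposition~\ref{basechangelc} carries over verbatim to show $(q_*\scrC)_x \to \scrC_{(t,x)}$ is an equivalence for \emph{every} $x\in X$, and then specializes to $x\in C$. Concretely, one runs the same two-step argument: first show that for each open $U\subset X$ there is an interval $I_U\ni t$ with $\scrC\bigl((0,1)\times U\bigr)\to\scrC(I_U\times U)$ an equivalence, then choose the $I_U$ so that $\{I_U\times U\}$ is a fundamental system of neighborhoods of $(t,x)$. No stratification bookkeeping, no descent over a cover of $V$, no shrinking cones.

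Your inductive argument, by contrast, has real gaps beyond the ones you flag. First, your inductive hypothesis is stated for \emph{conical} neighborhoods $V$ of $i(c)$, but $V^\partial = V\smallsetminus(\{c_0\}\times\bbR^k)$ is neither conical nor a neighborhood of $i(c)$, so the hypothesis does not apply to it as written. You would need to reformulate the claim for arbitrary open sets of bounded depth, which changes the base case. Second, and more seriously, the iterated-shrinking step for the cone piece $V'$ is circular: each $V'_\tau$ has the same depth as $V$, so the induction never advances, and your proposal to pass to the colimit over $\tau\to 0$ amounts to asserting that $\twocolim_\tau \scrC\bigl((0,1)\times V'_\tau\bigr)\cong\scrC_{(t,i(c))}$---which is precisely the equivalence $(q_*\scrC)_{i(c)}\cong\scrC_{(t,i(c))}$ you set out to prove. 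So the argument as sketched does not close.

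I would drop the depth induction entirely and instead spell out directly why the proof of Proposition~\ref{basechangelc} adapts to the constructible setting; that is the content the paper is gesturing at, and it avoids all of the $2$-categorical descent bookkeeping you are worried about.
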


\begin{proof}
Let $x$ be a point in $X$ and let $t$ be a point in $(0,1)$.  As in the proof of proposition \ref{basechangelc}, we may show that the natural map $(q_* \scrC)_x \to \scrC_{(t,x)}$ is an equivalence of categories.  If $x$ lies in the stratum $C$, then we have equivalences of categories:
$$
\begin{array}{rcccl}
(i^* q_* \scrC)_x & \cong & (q_* \scrC)_x & \cong & \scrC_{(t,x)} \\
(p_* j^* \scrC)_x & \cong & (j^* \scrC)_{(t,x)} & \cong & \scrC_{(t,x)}
\end{array}
$$
The base-change map commutes with these, proving the proposition.
\end{proof}

\begin{theorem}[Homotopy invariance]
\label{lem-hotinvconstr}
Let $(X,S)$ be a topologically stratified space.  Endow $(0,1) \times X$ with a topological stratification $T$ by setting $\big( (0,1) \times X \big)_i = (0,1) \times X_{i-1}$.  Let $\pi$ be the stratified projection map $(0,1) \times X \to X$.  The adjoint 2-functors $\pi_*$ and $\pi^*$ induce an equivalence between the 2-category of $S$-constructible stacks on $X$ and the 2-category of $T$-constructible stacks on $(0,1) \times X$.
\end{theorem}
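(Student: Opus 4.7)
The plan is to imitate the proof of Theorem \ref{lcshotinv} (the unstratified homotopy invariance), using the stratified base-change Proposition \ref{basechangecs} in place of Proposition \ref{basechangelc}, and reducing each computation to the already-established locally constant case by restricting to strata.

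First I would check that $\pi^*$ and $\pi_*$ are well-defined between the two 2-categories. The strata of $T$ are precisely the products $(0,1) \times \sigma$ for $\sigma$ a stratum of $S$, and $\pi$ restricted to such a stratum is projection onto $\sigma$; hence for an $S$-constructible $\scrC$, the restriction of $\pi^*\scrC$ to a $T$-stratum is the pullback of the locally constant $\scrC\vert_\sigma$, which is locally constant by Proposition \ref{prop-pullpushlc}. Conversely, if $\scrD$ is $T$-constructible and $i : C \hookrightarrow X$ is inclusion of a connected stratum, Proposition \ref{basechangecs} gives $i^* \pi_* \scrD \cong p_* j^* \scrD$; since $j^* \scrD$ is locally constant on $(0,1) \times C$ and $p$ is the projection, Theorem \ref{lcshotinv} shows $p_* j^* \scrD$ is locally constant on $C$, so $\pi_* \scrD$ is $S$-constructible.

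Next I would show the unit $\scrC \to \pi_* \pi^* \scrC$ is an equivalence by checking stalks. Given $x \in C$ with $C$ a stratum, factor $\{x\} \hookrightarrow X$ through $k : \{x\} \hookrightarrow C$ and $i : C \hookrightarrow X$. Applying Proposition \ref{basechangecs} and using $j^* \pi^* \scrC \cong p^* i^* \scrC = p^*(\scrC\vert_C)$, I obtain
\[
(\pi_* \pi^* \scrC)_x \;=\; k^* i^* \pi_* \pi^* \scrC \;\cong\; k^* p_* p^* (\scrC\vert_C).
\]
Since $\scrC\vert_C$ is locally constant on $C$, Theorem \ref{lcshotinv} applied to the projection $(0,1) \times C \to C$ identifies $\scrC\vert_C \to p_* p^* (\scrC\vert_C)$ with an equivalence; passing to stalks at $x$ and verifying compatibility with the adjunction data shows that the unit is a stalkwise equivalence.

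The counit $\pi^* \pi_* \scrD \to \scrD$ is handled symmetrically. For $(t,x) \in (0,1) \times X$ with $x \in C$, one has $(\pi^* \pi_* \scrD)_{(t,x)} \cong (\pi_* \scrD)_x$, and the computation above combined with the locally constant case (applied to $j^* \scrD$ on $(0,1) \times C$) identifies this stalk with $\scrD_{(t,x)}$ via the counit. The one subtlety — and the main obstacle to keep track of — is that Proposition \ref{basechangecs} is stated only for inclusions of strata, not arbitrary point inclusions, so one must always factor $\{x\} \hookrightarrow X$ as $\{x\} \hookrightarrow C \hookrightarrow X$ and combine the stratified base change with the locally constant theorem on the stratum. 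Once this bookkeeping is in place, the argument runs in parallel with Theorem \ref{lcshotinv} and upgrades automatically to an equivalence of 2-categories by the universal property of the stalk condition for stacks.
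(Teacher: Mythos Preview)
Your proposal is correct and follows essentially the same approach as the paper: reduce the unit and counit to the locally constant case on each stratum via Proposition~\ref{basechangecs}, then invoke Theorem~\ref{lcshotinv}. The only cosmetic difference is that the paper checks the maps $\scrC \to \pi_*\pi^*\scrC$ and $\pi^*\pi_*\scrD \to \scrD$ by applying $i_k^*$ (restriction to the whole stratum $X_k - X_{k-1}$) and showing the resulting map of locally constant stacks is an equivalence, whereas you pass one step further to stalks at individual points; since $(i_k^*\scrC)_x \cong \scrC_x$ for $x$ in the stratum, the two verifications are equivalent, and the paper's phrasing is marginally cleaner because it avoids the extra factoring step you flagged.
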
 

\begin{proof}
We have to show the maps $\scrC \to \pi_* \pi^* \scrC$ and $\pi^* \pi_* \scrD \to \scrD$ are equivalences, where $\scrC$ is a constructible stack on $X$ and $\scrD$ is a constructible stack on $(0,1) \times X$.  

For each $k$, let $i_k$ denote the inclusion map $X_k - X_{k-1} \hookrightarrow X$.  To prove that $\scrC \to \pi_* \pi^* \scrC$ is an equivalence it suffices to show that $i_k^* \scrC \to i_k^* \pi_* \pi^* \scrC$ is an equivalence for each $k$.  Let $j_k$ denote the inclusion $(0,1) \times (X_k - X_{k-1}) \hookrightarrow (0,1) \times X$, and let $p_k$ denote the projection map $(0,1) \times (X_k - X_{k-1}) \to X_k - X_{k-1}$.
$$\xymatrix{
(0,1) \times \big(X_k - X_{k-1}\big) \ar[r]^{\qquad j_k} \ar[d]_{p_k} & (0,1) \times X \ar[d]^\pi \\
X_k - X_{k-1} \ar[r]^{\qquad i_k} & X}$$
By proposition \ref{basechangecs}, we have an equivalence
$i_k^* \pi_* \pi^* \scrC \cong p_{k*} p_k^* i_k^* \scrC$.  
Since $i_k^*\scrC$ is locally constant on $X_k - X_{k-1}$, the map $i_k^* \scrC \to p_{k*} p_k^* i^* \scrC$ is an equivalence by theorem \ref{lcshotinv}.

To show $\pi^* \pi_* \scrD \to \scrD$ is an equivalence, it is enough to show that for each $k$ the map $j_k^* \pi^* \pi_* \scrD \to \scrD$ is an equivalence.  By proposition \ref{basechangecs} we have $j_k^* \pi^* \pi_* \scrD \cong p_k^* i_k ^* \pi_* \scrD \cong p_k^* p_{k*} j_k^* \scrD$, and since $j_k^* \scrD$ is locally constant the map $p_k^* p_{k*} j_k^* \scrD \to j_k^* \scrD$ is an equivalence of stacks by theorem \ref{lcshotinv}.
\end{proof}

\begin{corollary}
\label{corcshotinv}
Let $(X,S)$ and $(Y,T)$ be topologically stratified spaces, and let $f:X \to Y$ be a stratified homotopy equivalence.  
\begin{enumerate}
\item The 2-functor $f^*:\St_T(Y) \to \St_S(X)$ is an equivalence of 2-categories.
\item Let $\scrC$ be an $S$-constructible stack on $X$.  The functor $\scrC(X) \to f^* \scrC(Y)$ is an equivalence of categories.
\end{enumerate}
\end{corollary}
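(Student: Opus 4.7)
The plan is to follow the template of Corollary \ref{corlcshotinv} verbatim, replacing each invocation of Theorem \ref{lcshotinv} by its stratified analogue Theorem \ref{lem-hotinvconstr}. The crucial preliminary is that, once we endow $(0,1) \times X$ with the shifted stratification $T'$ for which $\bigl((0,1) \times X\bigr)_i = (0,1) \times X_{i-1}$, each slice inclusion $i_t : X \hookrightarrow (0,1) \times X$, $x \mapsto (t,x)$ (for $t \in (0,1)$), is stratum-preserving and satisfies $\pi \circ i_t = 1_X$, where $\pi$ is the stratified projection. By Theorem \ref{lem-hotinvconstr}, $\pi^*$ is an equivalence with quasi-inverse $\pi_*$, so the identity $i_t^* \circ \pi^* = (\pi \circ i_t)^* = 1$ forces $i_t^* \cong \pi_*$ as a 2-functor $\St_{T'}((0,1) \times X) \to \St_S(X)$. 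In particular, $i_t^*$ is independent of $t \in (0,1)$ up to natural equivalence.

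For part (1), fix a stratum-preserving homotopy inverse $g : Y \to X$. Using the reformulation of stratified homotopy recalled just before Theorem \ref{lem-hotinvconstr}, choose a stratum-preserving map $H : (0,1) \times X \to X$ together with points $t_0, t_1 \in (0,1)$ such that $H \circ i_{t_0} = g \circ f$ and $H \circ i_{t_1} = 1_X$, and similarly a stratum-preserving $G : (0,1) \times Y \to Y$ realizing a stratified homotopy between $f \circ g$ and $1_Y$. Since $H^* \scrC$ is $T'$-constructible for any $S$-constructible $\scrC$ (pullback along a stratum-preserving map preserves constructibility), applying the observation above to $H^*\scrC$ gives
$$f^* \circ g^* = (g \circ f)^* = (H \circ i_{t_0})^* \cong (H \circ i_{t_1})^* = 1_{\St_S(X)},$$
and the symmetric argument with $G$ yields $g^* \circ f^* \cong 1_{\St_T(Y)}$. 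Hence $f^*$ is an equivalence of 2-categories.

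Part (2) follows formally, exactly as in Corollary \ref{corlcshotinv}: the trivial stack $*$ with value the terminal category is locally constant, hence constructible on any topologically stratified space, and $\scrC(Y) \cong \Hom_{\St_T(Y)}(*, \scrC)$, $(f^*\scrC)(X) \cong \Hom_{\St_S(X)}(*, f^*\scrC)$. Since $f^* * \cong *$ and $f^*$ is a 2-equivalence by (1), it induces an equivalence between these Hom-categories. The only real obstacle in the whole argument is the bookkeeping: one must verify that with the dimension-shift convention the $i_t$ land in the declared strata, and that the stratified homotopies $H$ and $G$ really are stratum-preserving after restriction to the open interval. Both are immediate from the observations tabulated just before Theorem \ref{lem-hotinvconstr}.
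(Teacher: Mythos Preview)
Your proof is correct and follows exactly the approach the paper intends: the paper's own proof simply reads ``A proof identical to the one of corollary \ref{corlcshotinv} gives both statements,'' and you have carried this out, correctly substituting Theorem \ref{lem-hotinvconstr} for Theorem \ref{lcshotinv} and using the open-interval reformulation of stratified homotopy recorded just before Theorem \ref{lem-hotinvconstr}. Your extra care in checking that the slice inclusions $i_t$ are stratum-preserving with respect to the shifted stratification is a welcome clarification of a point the paper leaves implicit.
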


\begin{proof}
A proof identical to the one of corollary \ref{corlcshotinv} gives both statements.
\end{proof}

\begin{theorem}
\label{cscrit}
Let $(X,S)$ be a topologically stratified space and let $\scrC$ be a stack on $X$.  
The following are equivalent:
\begin{enumerate}

\item $\scrC$ is constructible with respect to the stratification.

\item If $U$ and $V$ are two open subsets of $X$ with $V \subset U$, and if the inclusion map $V \hookrightarrow U$ is a stratified homotopy equivalence, then the restriction functor $\scrC(U) \to \scrC(V)$ is an equivalence of categories.

\item Whenever $U$ and $V$ are conical open subsets of $X$ such that $V \subset U$ and the inclusion map $V \to U$ is a stratified homotopy equivalence, the restriction functor $\scrC(U) \to \scrC(V)$
is an equivalence of categories.

\end{enumerate}
If $\scrC$ satisfies these conditions then the natural functor $\scrC(U) \to \scrC_x$ is an equivalence
of categories whenever $U$ is a conical open neighborhood of $x$.
\end{theorem}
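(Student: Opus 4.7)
The plan is to prove $(1) \Rightarrow (2) \Rightarrow (3) \Rightarrow (1)$, with the supplementary statement about conical neighborhoods emerging as an intermediate step in the last implication. For $(1) \Rightarrow (2)$, I would apply corollary \ref{corcshotinv} to the inclusion $V \hookrightarrow U$, which by hypothesis is a stratified homotopy equivalence between open subspaces of $X$ equipped with their induced topological stratifications, together with the constructible stack $\scrC|_U$; this yields that $\scrC(U) \to \scrC(V)$ is an equivalence of categories. The implication $(2) \Rightarrow (3)$ is immediate, since conical open subsets are a special case of open subsets.

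For $(3) \Rightarrow (1)$, I would first establish the supplementary statement: $\scrC(U) \to \scrC_x$ is an equivalence whenever $U$ is a conical open neighborhood of $x$. Fix a homeomorphism $U \cong CL \times \bbR^i$ identifying $x$ with the center $(c,0)$, and form a cofinal sequence of conical sub-neighborhoods $U_n$ of $x$ by shrinking the cone coordinate to $[0,1/n)$ and the Euclidean coordinate to the open ball $B_{1/n}(0) \subset \bbR^i$. Each $U_n$ is itself a conical open subset of $X$, and each inclusion $U_{n+1} \hookrightarrow U_n$ is a stratified homotopy equivalence: the obvious rescaling of the cone and ball coordinates is stratum-preserving throughout, since each stratum of a conical chart has the product form $CL_j \times \bbR^i$. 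Condition (3) then forces each restriction $\scrC(U_n) \to \scrC(U_{n+1})$ to be an equivalence of categories, so the induced map $\scrC(U) \to \scrC_x \cong \wtwocolim{n}{\scrC(U_n)}$ is also an equivalence.

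To deduce (1), fix $x \in X_k - X_{k-1}$, a conical neighborhood $U \cong CL \times \bbR^k$ of $x$, and the slice $W = U \cap (X_k - X_{k-1}) \cong \bbR^k$. For every $y = (c,v) \in W$, the open set $U_y := CL \times B_\epsilon(v) \subset U$ is a conical neighborhood of $y$ in $X$, and the inclusion $U_y \hookrightarrow U$ is a stratified homotopy equivalence via contraction in the $\bbR^k$-coordinate. Combining condition (3) with the supplementary statement just proved shows that the composite $\scrC(U) \to \scrC(U_y) \to \scrC_y$ is an equivalence for every $y \in W$. This composite is the stalk at $y$ of the canonical map of stacks on $W$ from the constant stack $\scrC(U)_W$ to $i_k^*\scrC|_W$; thus, after identifying the left-hand stalks with $\scrC(U)$ via proposition \ref{constantstacks}(2), that map is an equivalence on all stalks and therefore an equivalence of stacks. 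Consequently $i_k^*\scrC|_W$ is constant, and varying $W$ yields that $i_k^*\scrC$ is locally constant on $X_k - X_{k-1}$, as desired. The main obstacle I anticipate is the geometric bookkeeping around conical charts: producing cofinal families of conical sub-neighborhoods and checking that each relevant inclusion is a stratified homotopy equivalence. This reduces to routine product manipulations thanks to the $CL_j \times \bbR^i$ form of strata in a conical chart, but must be carried out carefully.
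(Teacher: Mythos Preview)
Your proposal is correct and follows essentially the same approach as the paper: both use corollary \ref{corcshotinv} for $(1)\Rightarrow(2)$, note $(2)\Rightarrow(3)$ is trivial, and for $(3)\Rightarrow(1)$ both exploit the family of conical sub-neighborhoods $C_\epsilon L \times B_\delta(v)$ of a chart $U\cong CL\times\bbR^k$, observe that the inclusions among them are stratified homotopy equivalences, and conclude that $\scrC(U)\to\scrC_y$ is an equivalence for every $y$ in the bottom stratum of $U$, whence $i_k^*\scrC$ is locally constant. The only difference is organizational: you isolate the stalk equivalence at the center as a separate supplementary step and use a countable cofinal sequence, whereas the paper handles all points $y\in Y\cap U$ simultaneously with the full family $\{C_\epsilon L \times B_\delta(v)\}$.
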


\begin{proof}
Suppose $\scrC$ is constructible, and let $U$ and $V$ be as in (2).  Then $\scrC(U) \to \scrC(V)$ is an equivalence by corollary \ref{corcshotinv}, so (1) implies (2).  Clearly (2) implies (3).  

Suppose now that $\scrC$ satisfies the third condition.  Let $Y = X_k - X_{k-1}$ be a stratum, and let $i:Y \hookrightarrow X$ denote the inclusion map.  Let $\{U\}$ be a collection of conical open sets in $X$ that cover $Y$, and such that each $U \cap Y$ is closed in $U$.  To show that $i^* \scrC$ is locally constant on $Y$ it is enough to show that $j^*(\scrC\vert_U)$ is constant on $Y \cap U$, where $j$ denotes the inclusion $Y \cap U \to U$.  

For each $\epsilon > 0$ let $C_\epsilon L \subset CL$ denote the set $[0,\epsilon) \times L / \{0\} \times L$, and let $B_\epsilon(v)$ denote the ball of radius $\epsilon$ around $v \in \bbR^k$.  Let $\{U_i\}$
be the collection of open subsets of $X$ of the form $C_\epsilon L \times B_\delta(v)$ under the homeomorphism $U \cong CL \times \bbR^k$.  Whenever $U_i$ and $U_j$ are of this form and $U_j \subset U_i$, it is easy to directly construct a stratified homotopy inverse to the inclusion map $U_j\hookrightarrow U_i$; thus by assumption the restriction functor $\scrC(U_i) \to \scrC(U_j)$ is an equivalence.  For each $y \in Y \cap U$ the $U_i$ containing $y$ form a fundamental system of neighborhoods of $y$, so the functor $\scrC(U) \to \scrC_y \cong \big(j^* \scrC\big)_y$ is an equivalence.  It follows that $j^*\scrC$ is equivalent to the constant sheaf on $Y \cap U$ with fiber $\scrC(U)$.  This completes the proof.
\end{proof}

\subsection{Direct images}

\begin{proposition}
Let $(X,S)$ and $(Y,T)$ be topologically stratified spaces, and let $f:X \to Y$ be a stratified map (see definition \ref{def-stratifiedmaps}).  Let $\scrC$ be an $S$-constructible stack on $X$.  Then $f_* \scrC$ is $T$-constructible on $Y$.
\end{proposition}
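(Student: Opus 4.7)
The plan is to apply the constructibility criterion of Theorem~\ref{cscrit} to $f_*\scrC$. To verify condition (3), let $V \subset U$ be conical open subsets of $Y$ such that the inclusion $V \hookrightarrow U$ is a stratified homotopy equivalence. I need to show that the restriction functor $f_*\scrC(U) \to f_*\scrC(V)$ is an equivalence of categories. By definition of the direct image, $f_*\scrC(U) = \scrC(f^{-1}(U))$ and $f_*\scrC(V) = \scrC(f^{-1}(V))$, so the problem reduces to showing that $\scrC(f^{-1}(U)) \to \scrC(f^{-1}(V))$ is an equivalence.

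Because $\scrC$ is already $S$-constructible on $X$, Corollary~\ref{corcshotinv}(2) produces the desired equivalence as soon as one knows that the inclusion $f^{-1}(V) \hookrightarrow f^{-1}(U)$ is itself a stratified homotopy equivalence with respect to the restriction of $S$. So the whole argument comes down to the following lifting statement: if $f: X \to Y$ is a stratified map in the strong sense of Definition~\ref{def-stratifiedmaps}, and if $V \hookrightarrow U$ is a stratified homotopy equivalence between conical opens of $Y$, then $f^{-1}(V) \hookrightarrow f^{-1}(U)$ is a stratified homotopy equivalence in $X$.

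To prove this lifting, I would write $U \cong CL \times \bbR^i$ as a conical neighborhood of a point $y \in Y_i - Y_{i-1}$ and, as in the proof of Theorem~\ref{cscrit}, reduce to the case $V \cong C_\epsilon L \times B_\delta(v)$ sitting inside $U$ in the obvious way. The stratified homotopy equivalence $V \hookrightarrow U$ is then the explicit map given by a radial scaling in $CL$ combined with an affine contraction in $\bbR^i$. The local product decomposition $f^{-1}(U_0) \cong F \times U_0$ along $U_0 = U \cap Y_i$, supplied by condition (2) of Definition~\ref{def-stratifiedmaps}, lets me lift the $\bbR^i$-contraction by acting trivially on the fiber $F$. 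The scaling in the cone direction is lifted by using that $f$ is stratified between the transverse strata coming from $L$, which gives a compatible cone-like structure on $f^{-1}$ of each level set of the cone coordinate.

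The main obstacle is this lifting step. Definition~\ref{def-stratifiedmaps} only guarantees local triviality of $f$ over a neighborhood within a single stratum of $Y$, not over the full conical neighborhood $U \subset Y$; combining the tangential trivialization along $Y_i$ with the transverse cone structure to produce an honest stratum-preserving homotopy on all of $f^{-1}(U)$ requires a careful compatibility argument, possibly by induction on the dimension of the link $L$. Once such a lifted homotopy is written down, verifying that it is stratum-preserving at every time is automatic, because the homotopy downstairs is stratum-preserving and the lift is constructed to respect the stratification of $X$.
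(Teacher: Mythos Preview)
Your overall strategy matches the paper's exactly: verify condition~(3) of Theorem~\ref{cscrit} by lifting the stratified homotopy equivalence $V \hookrightarrow U$ to $f^{-1}(V) \hookrightarrow f^{-1}(U)$ and then invoking Corollary~\ref{corcshotinv}.

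The difference is in how the lift is produced. The paper does not split the homotopy into a tangential $\bbR^i$-part and a cone-direction part; instead it simply shrinks $U$ until the stratified-map hypothesis yields a filtration-preserving homeomorphism $f^{-1}(U) \cong F \times U$ over the \emph{entire} conical neighborhood $U$. Once that product structure is in place, any stratified homotopy inverse $\phi: U \to V$ lifts immediately to $(\mathrm{id},\phi): F \times U \to F \times V$, and the required stratified homotopy follows at once. Your ``main obstacle'' --- that Definition~\ref{def-stratifiedmaps} literally provides the trivialization only over a neighborhood in the stratum $Y_i$ --- is a fair textual reading, but the paper (following \cite{ih2}) uses the stronger interpretation that the local triviality holds over a full neighborhood of $y$ in $Y$; the phrase ``in $Y_i$'' in the definition appears to be a slip. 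Under that reading the elaborate induction you sketch is unnecessary, and the lift is one line.
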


\begin{proof}
Let $y$ be a point of $Y$.  Let $U \cong \bbR^k \times CL$ be a conical neighborhood of $y$, and let $V \subset U$ be a smaller conical neighborhood such that the inclusion map $V \hookrightarrow U$ is a stratified homotopy equivalence.  We may assume $U$ is small enough so that there exists a topologically stratified space $F$ and a stratum-preserving homeomorphism $f^{-1}(U) \cong F \times U$ that commutes with the projection to $U$.  Then the inclusion map $f^{-1}(V) \hookrightarrow f^{-1}(U)$ is a stratified homotopy equivalence: if $\phi:U \to V$ is a homotopy inverse, then a homotopy inverse to $f^{-1}(V) \hookrightarrow f^{-1}(U)$ is given by $(id,\phi):F \times U \to F \times V$.  By proposition \ref{corcshotinv} and theorem \ref{cscrit} it follows that $f_* \scrC$ is constructible.
\end{proof}

\section{Example: the stack of perverse sheaves}

Let $(X,S)$ be a topologically stratified space.  Let $D^b_S(X)$ denote the bounded constructible derived category of $(X,S)$.  $D^b_S(X)$ is the full subcategory of the bounded derived category of sheaves of abelian groups on $X$ whose objects are the cohomologically constructible complexes of sheaves on $X$; that is, the complexes whose cohomology sheaves are constructible with respect to the stratification of $X$.  For details and references see \cite{ih2}.

We note the following:

\begin{lemma}
\label{dhotinv}
Let $(X,S)$ be a topologically stratified space, let $T$ be the induced stratification on $(0,1) \times X$, and let $\pi:(0,1) \times X \to X$ denote the projection map.  The pullback functor $\pi^*:D^b_S(X) \to D^b_T\big( (0,1) \times X \big)$ is an equivalence of categories.
\end{lemma}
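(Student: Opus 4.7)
The plan is to mirror the proof of Theorem \ref{lem-hotinvconstr} with sheaves in place of stacks: show that the unit $\mathrm{id} \to R\pi_* \pi^*$ and counit $\pi^* R\pi_* \to \mathrm{id}$ of the adjunction $(\pi^*, R\pi_*)$ are both isomorphisms when restricted to the constructible subcategories $D^b_S(X)$ and $D^b_T((0,1) \times X)$ respectively, which exhibits $\pi^*$ and $R\pi_*$ as mutually inverse equivalences.

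The first step is to establish a base-change analogue of Proposition \ref{basechangecs}: for the inclusion $i_k : X_k - X_{k-1} \hookrightarrow X$ of a stratum, with projection $p_k : (0,1) \times (X_k - X_{k-1}) \to X_k - X_{k-1}$ and inclusion $j_k : (0,1) \times (X_k - X_{k-1}) \hookrightarrow (0,1) \times X$, the natural map $i_k^* R\pi_* \mathcal{F} \to R p_{k*} j_k^* \mathcal{F}$ is an isomorphism for any $T$-constructible $\mathcal{F}$. The argument is the one used for Proposition \ref{basechangelc}: for $x \in X$ and $t \in (0,1)$ one produces a fundamental system of neighborhoods of $(t,x)$ of the form $I_U \times U$, where $U$ ranges over a neighborhood basis of $x$ and $I_U \subset (0,1)$ is chosen using constructibility of $\mathcal{F}$ so that $R\Gamma([0,1] \times U, \mathcal{F}) \to R\Gamma(I_U \times U, \mathcal{F})$ is an isomorphism. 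From this one reads off $(R\pi_* \mathcal{F})_x \cong \mathcal{F}_{(t,x)}$, and likewise for the right-hand side.

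With base change in hand, both the unit and the counit are checked stratum by stratum in exact parallel with Theorem \ref{lem-hotinvconstr}. To see $\mathcal{F} \to R\pi_* \pi^* \mathcal{F}$ is an isomorphism for $\mathcal{F} \in D^b_S(X)$, it suffices to apply $i_k^*$ and use base change to rewrite the target as $R p_{k*} p_k^* (i_k^* \mathcal{F})$. Since $i_k^* \mathcal{F}$ is cohomologically locally constant on $X_k - X_{k-1}$, the classical homotopy invariance for the derived category of locally constant complexes on a locally contractible space (proved by a stalk computation using $R\Gamma((0,1), A) = A$) finishes the job. The counit $\pi^* R\pi_* \mathcal{G} \to \mathcal{G}$ is treated symmetrically: pull back by $j_k^*$, apply base change to obtain $p_k^* R p_{k*} j_k^* \mathcal{G}$, and invoke homotopy invariance for the locally constant complex $j_k^* \mathcal{G}$.

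The main obstacle is the base-change step, since $\pi$ is not proper and proper base change does not apply directly. The workaround is the same as in the proofs of Propositions \ref{basechangelc} and \ref{basechangecs}: one exploits constructibility in the $(0,1)$-direction to control the behavior of $R\Gamma$ on arbitrarily small tubes $I \times U$. Once this ingredient is in place the remainder of the argument is a formal transcription of the proof of Theorem \ref{lem-hotinvconstr}.
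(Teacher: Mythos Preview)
Your approach is correct but structurally quite different from the paper's.  The paper gives a short, direct argument: it invokes the well-known fact that a constructible sheaf $F$ on a conical open set $\bbR^k \times CL$ satisfies $H^i(\bbR^k \times CL;F) = 0$ for $i > 0$.  Since every point of $X$ has a fundamental system of such neighborhoods, and since $(0,1)\times U \cong \bbR^{k+1}\times CL$ whenever $U\cong \bbR^k\times CL$, it follows immediately that $R^i\pi_* F = 0$ for $i>0$ and $F$ constructible; from this both adjunction maps are quasi-isomorphisms.  There is no stratum-by-stratum reduction and no separate base-change lemma.

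Your route instead transplants the entire architecture of the constructible-stack proof (Theorem~\ref{lem-hotinvconstr}): first a base-change statement analogous to Proposition~\ref{basechangecs}, then reduction along each $i_k$ to the locally constant case, then classical homotopy invariance on each stratum.  This works and has the virtue of making the parallel with the stack-theoretic argument explicit.  However, the step you describe as ``chosen using constructibility of $\mathcal{F}$ so that $R\Gamma((0,1)\times U,\mathcal{F})\to R\Gamma(I_U\times U,\mathcal{F})$ is an isomorphism'' is exactly where the acyclicity of conical neighborhoods enters, and it is the real content of the lemma rather than a formality --- you should name it as such.  Once that input is acknowledged, the paper's route is simply the shorter path to the same destination.
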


\begin{proof}
A constructible sheaf $F$ on a topologically stratified space $U$ of the form $U = \bbR^k \times CL$
has the property that $H^i(U;F) = 0$ for $i>0$.  We may use this to show that $R^i \pi_* F$ vanishes for $F$ constructible on $(0,1) \times X$ and $i >0$.  Indeed, $R^i \pi_* F$ is the sheafification of the presheaf $U \mapsto H^i\big( (0,1) \times U; F)\big)$ and since every point of $X$ has a fundamental system of neighborhoods of the form $\bbR^k \times CL$ the stalks of this presheaf vanish; it follows that $R^i \pi_* F$ vanishes.  Thus $F \to \bR \pi_* \pi^* F$ is a quasi-isomorphism for every sheaf $F$ on $X$, and $\pi^* \bR \pi_* F \to F$ is a quasi-isomorphism for every constructible sheaf on $(0,1) \times X$, completing the proof.  
\end{proof}

If $C$ is a connected stratum of $X$ let $i_C$ denote the inclusion map $i_C:C \hookrightarrow X$.  Let $D^b_{lc}(C)$ denote the subcategory of $D^b(C)$ whose objects are the complexes with locally constant cohomology sheaves.  Recall the four functors
$\bR i_{C,*}, i_{C,!}:D^b_{lc}(C) \to D^b_S(X)$ and $\bR i_C^!, i_C^*:D^b_S(X) \to D^b_{lc}(C)$,  and recall the following definition from \cite{bbd}:

\begin{definition}
Let $(X,S)$ be a topologically stratified space, and let $p:C \mapsto p(C)$ be any function from connected strata of $(X,S)$ to $\bbZ$.  For each connected stratum $C$, let $i_C$ denote the inclusion $C \hookrightarrow X$.  A \emph{perverse sheaf of perversity $p$} on $X$, constructible with respect to $S$, is a complex $K \in D^b_S(X)$ such that
\begin{enumerate}
\item The cohomology sheaves of $i_C^* K \in D^b(C)$ vanish above degree $p(C)$ for each $C$.
\item The cohomology sheaves of $\bR i_C^! K \in D^b(C)$ vanish below degree $p(C)$ for each $C$.
\end{enumerate}

Let $\bP(X,S,p)$ denote the full subcategory of $D^b_S(X)$  whose objects are the perverse sheaves of perversity $p$.
\end{definition}

Every open set $U \subset X$ inherits a stratification from $X$, and we may form the category $D^b_S(U)$.  This defines a prestack on $X$: there is a restriction functor $D^b_S(U) \to D^b_S(V)$ defined in the obvious way whenever $V \subset U$ are open sets in $X$.  It is easy to see that if $P$ is a perverse sheaf on $U$ then its restriction to $V$ is also a perverse sheaf.  We obtain a prestack $U\mapsto \bP(U,S,p)$.  Write $\scrP_{X,S,p}$ for this prestack.  The following theorem is a result of \cite{bbd}:

\begin{theorem}
Let $X$ be a topologically stratified space with stratification $S$.  Let $p$ be any function from connected strata of $X$ to integers.  The prestack $\scrP_{X,S,p}$ is a stack.
\end{theorem}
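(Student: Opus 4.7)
The plan is to verify the two stack axioms for $\scrP_{X,S,p}$: for every open $U \subset X$ and every open cover $\{U_\alpha\}$ of $U$, (i) the presheaf $V \mapsto \Hom_{\bP(V,S,p)}(P|_V, Q|_V)$ is a sheaf for any perverse sheaves $P, Q$ on $U$, and (ii) any compatible family of perverse sheaves on the $U_\alpha$ glues to a perverse sheaf on $U$. Since $\bP(U,S,p)$ sits as a full subcategory of $D^b_S(U)$ and the perversity conditions are local (stalk and costalk conditions along strata, which commute with restriction to open sets), the strategy for both axioms is to work inside the ambient derived category and observe that the perverse conditions carry over automatically to any object produced there.

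For axiom (i), I would run the Mayer--Vietoris (equivalently, \v{C}ech-to-derived-functor) spectral sequence for $R\shHom(P,Q)$. The sheaf axiom for $\Hom$ translates into exactness of
\[ 0 \to \Hom(P|_W, Q|_W) \to \prod_\alpha \Hom(P|_{W_\alpha}, Q|_{W_\alpha}) \rightrightarrows \prod_{\alpha,\beta} \Hom(P|_{W_{\alpha\beta}}, Q|_{W_{\alpha\beta}}) \]
for any open $W \subset U$ with cover $\{W_\alpha\}$. The connecting maps from negative Ext groups vanish because $\Hom_{D^b(V)}(P|_V, Q|_V[n]) = 0$ for $n < 0$: this is precisely the statement that perverse sheaves form the heart of a t-structure, and the vanishing holds over every open $V \subset U$ since the perversity conditions restrict along open embeddings.

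For axiom (ii), I would assemble descent data $\{P_\alpha, \phi_{\alpha\beta}\}$ into a single object of $D^b_S(U)$ and then verify perversity. The derived-category gluing is the delicate point: in general, a $1$-cocycle is insufficient to glue complexes because higher coherence data on triple and higher intersections is required. However, the vanishing of negative Ext groups from step (i) kills precisely those higher obstructions, so in the heart of the perverse t-structure a $1$-cocycle suffices. The perversity, boundedness, and $S$-constructibility of the resulting object are local conditions inherited from the $P_\alpha$.

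The main obstacle is axiom (ii), specifically making rigorous the passage from $1$-cocycle descent data to an actual object of the derived category. The clean way to do this is via the recollement and glued-t-structure machinery of \cite{bbd}: the perverse t-structure on $U$ is characterized as the unique t-structure compatible with restriction to each $U_\alpha$, and its heart is naturally equivalent to the $2$-limit of the hearts over the cover. Establishing this equivalence --- \emph{heart of the glued t-structure equals descent category of perverse sheaves} --- is essentially the content of the theorem, and handling multi-fold intersections while controlling the combinatorics of higher coherences is the most technical aspect of a full proof.
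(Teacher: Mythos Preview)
The paper does not supply its own proof of this theorem; it simply attributes the result to \cite{bbd} and moves on to the consequence it actually needs (constructibility of $\scrP_{X,S,p}$, Theorem~\ref{pshconstructible}). Your sketch is a faithful outline of the argument one finds in \cite{bbd} (see \S3.2 there, particularly 3.2.2--3.2.4): the Hom-sheaf axiom follows because perverse sheaves form the heart of a t-structure, so $\Hom_{D^b(V)}(P\vert_V,Q\vert_V[n])=0$ for $n<0$ on every open $V$, and the gluing axiom holds because this same vanishing eliminates the higher coherence obstructions that normally prevent 1-cocycle descent in a derived category.

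Your identification of the main difficulty---that descent data in a triangulated category is in general insufficient without higher coherences, and that the negative-Ext vanishing is exactly what rescues the situation in the heart---is correct and is the essential content of the BBD argument. One small refinement: rather than invoking recollement and glued t-structures to handle gluing, BBD argues more directly by showing that the vanishing of $\mathrm{Ext}^{-1}$ forces uniqueness of the glued object up to unique isomorphism at each stage of an inductive \v{C}ech-style construction, so the ``combinatorics of higher coherences'' collapses automatically. The recollement formalism is used in \cite{bbd} to \emph{construct} the perverse t-structure in the first place, not to prove the stack property.
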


We may easily prove, using the criterion in theorem \ref{cscrit}:

\begin{theorem}
\label{pshconstructible}
Let $(X,S)$ be a topologically stratified space.  Let $p$ be any function from connected strata of $X$ to integers.  The stack $\scrP_{X,S,p}$ is constructible.
\end{theorem}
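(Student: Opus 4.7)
The approach is to verify condition (3) of Theorem \ref{cscrit}. We must show that whenever $V \subset U$ are conical open subsets of $X$ and the inclusion $V \hookrightarrow U$ is a stratified homotopy equivalence, the restriction functor $\scrP_{X,S,p}(U) \to \scrP_{X,S,p}(V)$ is an equivalence of categories.

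The first step is to establish the derived-category analogue: the restriction $D^b_S(U) \to D^b_S(V)$ is an equivalence. This is the $D^b_S$-version of Corollary \ref{corcshotinv}, and its proof parallels the one given there, with Lemma \ref{dhotinv} playing the role of Theorem \ref{lem-hotinvconstr}. Concretely, pick a stratum-preserving retraction $s: U \to V$ together with stratified homotopies exhibiting $\iota \circ s \sim \mathrm{id}_U$ and $s \circ \iota \sim \mathrm{id}_V$; Lemma \ref{dhotinv} then implies that pulling back along the two endpoints of each homotopy yields naturally isomorphic functors on the relevant constructible derived categories, so that $s^* \iota^* \cong \mathrm{id}$ and $\iota^* s^* \cong \mathrm{id}$.

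It remains to show that this derived equivalence restricts to an equivalence of perverse subcategories. Restriction preserves perversity, as noted in the excerpt; for the converse, suppose $K \in D^b_S(U)$ has $K|_V$ perverse. For each stratum $C$ of $X$, write $C_U = C \cap U$ and $C_V = C \cap V$, and observe that because the stratified homotopy equivalence is stratum-preserving, every connected component of $C_U$ meets $C_V$ (indeed, $s$ sends each component $W$ of $C_U$ into $W \cap V$). The cohomology sheaf $\scrH^j(i_{C_U}^* K)$ is locally constant on $C_U$ since $K$ is $S$-constructible, and its restriction to $C_V$ agrees with $\scrH^j(i_{C_V}^*(K|_V))$, which vanishes for $j > p(C)$ by hypothesis; hence $\scrH^j(i_{C_U}^* K)$ vanishes identically on $C_U$. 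The analogous argument applied to $\bR i_{C_U}^! K$, using base change along the open immersion $V \hookrightarrow U$, yields the lower-degree vanishing, so $K$ is perverse.

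The main technical obstacle is the first step, since Lemma \ref{dhotinv} as stated only handles the special case of projecting away an $(0,1)$ factor. Upgrading it to invariance under arbitrary stratified homotopy equivalences of conical sets requires re-running the argument of Corollaries \ref{corlcshotinv} and \ref{corcshotinv} in the derived setting; once this adaptation is in place, the preservation-of-perversity part is a direct cohomology-sheaf computation.
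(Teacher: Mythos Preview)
Your approach is essentially the same as the paper's: verify the criterion of Theorem~\ref{cscrit} by first establishing that $D^b_S(U) \to D^b_S(V)$ is an equivalence (using Lemma~\ref{dhotinv}) and then deducing the same for the full subcategory of perverse sheaves. The paper's proof is extremely terse---it simply asserts both steps---whereas you have correctly identified and filled in the two points that require elaboration: that Lemma~\ref{dhotinv} as stated only covers the projection $(0,1)\times X \to X$ and must be bootstrapped to arbitrary stratified homotopy equivalences via the argument pattern of Corollary~\ref{corcshotinv}, and that essential surjectivity of $\scrP(U)\to\scrP(V)$ needs the connectedness/local-constancy argument you supply.
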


\begin{proof}
Let $U$ and $V$ be open sets in $X$, and suppose $V \subset U$ and that the inclusion map $V \hookrightarrow U$ is a stratified homotopy equivalence.  By lemma \ref{dhotinv}, the restriction map $D^b_S(U) \to D^b_S(V)$ is an equivalence of categories.  It follows that $\scrP(U) \to \scrP(V)$ is also an equivalence.  Thus $\scrP$ is constructible by theorem \ref{cscrit}.
\end{proof}

\section{The fundamental 2-groupoid and 2-monodromy}

In this section we review the unstratified version of our main theorem \ref{maintheorem}: we introduce the fundamental 2-groupoid $\pi_{\leq 2}(X)$ of a space $X$ and prove that the 2-category of locally constant stacks
$\St_{lc}(X)$ is equivalent to the 2-category of $\Cat$-valued functors on $\pi_{\leq 2}(X)$.  Let us call the latter objects ``2-monodromy functors,'' and write $\twomon(X)$ for the 2-category of 2-monodromy functors $F:\pi_{\leq 2}(X) \to \Cat$.  We define a 2-functor
$$\sfN:\twomon(X) \to \St_{lc}(X)$$
and prove that it is essentially fully faithful and essentially surjective.  The most important ingredient is an analog for $\pi_{\leq 2}(X)$ of the classical van Kampen theorem; this is theorem \ref{vankampen}.  The results of this section are essentially contained in \cite{pursuingstacks} and \cite{polesellowaschkies}.

\subsection{The fundamental 2-groupoid}
\label{fun2gpd}

Let $X$ be a compactly generated Hausdorff space, and let $x$ and $y$ be two points of $X$.  A \emph{Moore path} from $x$ to $y$ is a pair $(\lambda,\gamma)$ where $\lambda$ is a nonnegative real number and $\gamma:[0,\lambda] \to X$ is a path with $\gamma(0) = x$ and $\gamma(\lambda) = y$.
Let us write $P(x,y)$ for the space of Moore paths from $x$ to $y$, given the compact-open topology.  We have a concatenation map $P(y,z) \times P(x,y) \to P(x,z)$ defined by the formula 
$$
(\lambda,\gamma) \cdot (\kappa,\beta) = (\lambda + \kappa, \alpha) \text{ where } 
\alpha(t) = \bigg\{
\begin{array}{cl}
\beta(t) & \text{ if }t \leq \kappa \\
\gamma(t-\kappa) & \text{ if }t \geq \kappa
\end{array}
$$
If we give the product $P(y,z) \times P(x,y)$ the Kelly topology (the categorical product in the category of compactly generated Hausdorff spaces), this concatenation map is continuous.  It is strictly associative and the constant paths from $[0,0]$ are strict units.

\begin{definition}
Let $\pi_{\leq 2}(X)$ denote the 2-category whose objects are points of $X$, and whose hom categories $\Hom(x,y)$ are the fundamental groupoids of the spaces $P(x,y)$.  (The discussion above shows that this is a strict 2-category.)
\end{definition}

\begin{remark}
\label{moorehot}
The 2-morphisms in $\pi_{\leq 2}(X)$ are technically equivalence classes of paths $[0,1] \to P(x,y)$.  A path $[0,1] \to P(x,y)$ between $\alpha$ and $\beta$ is given by a pair $(b,H)$ where $b$ is a map $[0,1] \to \bbR_{\geq 0}$, and $H$ is a map from the closed region in $[0,1] \times \bbR_{\geq 0}$ under the graph of $b$:
\begin{center}
\setlength{\unitlength}{.1cm}
\begin{picture}(20,30)
\linethickness{.5mm}

\put(0,0){\line(1,0){20}}
\put(0,0){\line(0,1){30}}
\put(20,0){\line(0,1){25}}
\put(20,25){\qbezier(0,0)(-5,-15)(-20,5)}

\put(10,21){$b$}

\end{picture}
\end{center}
$H$ is required to take the top curve to $y$, the bottom curve to $x$, and to map the left and right intervals into $X$ by $\alpha$ and $\beta$.  It is inconvenient and unnecessary to keep track of the function $b$: there is a reparameterization map from Moore paths to ordinary (length 1) paths which takes $(\lambda, \gamma)$ to the path $t \mapsto \gamma(\lambda \cdot t)$.  This map is a homotopy equivalence, so it induces an equivalence of fundamental groupoids.  Thus, 2-morphisms from $\alpha$ to $\beta$ may be represented by homotopy classes of maps $H:[0,1] \times [0,1] \to X$ with the properties
\begin{enumerate}
\item $H(0,u) = \alpha (s \cdot u)$, where $s$ is the length of the path $\alpha$.
\item $H(1,u) = \beta( t \cdot u)$, where $t$ is the length of the path $\beta$.
\item $H(u,0) = x$ and $H(u,1) = y$.
\end{enumerate}
\end{remark}

\subsection{Two-monodromy and locally constant stacks}

\begin{definition}
Let $X$ be a compactly generated Hausdorff space.  Let $\twomon(X)$ denote the 2-category of 2-functors from $\pi_{\leq 2}(X)$ to the 2-category of categories:
$$\twomon(X) := \twofunct\big(\pi_{\leq 2}(X),\Cat\big)$$
\end{definition}

Let $U \subset X$ be an open set.  The inclusion morphism $U \hookrightarrow X$ induces a strict 2-functor $\pi_{\leq 2}(U) \to \pi_{\leq 2}(X)$; let $j_U$ denote this 2-functor.  If $F: \pi_{\leq 2}(X) \to \Cat$ is a 2-monodromy functor on $X$ set $F \vert_U := F \circ j_U$.

\begin{definition}
\label{defsfN}
Let $X$ be a compactly generated Hausdorff space.  Let $\sfN:\twomon(X) \to \Prest(X)$ denote the 2-functor which assigns to a 2-monodromy functor $F:\pi_{\leq 2}(X) \to \Cat$ the prestack
$$\sfN F: U \mapsto \wtwolim{\pi_{\leq 2}(U)}{F\vert_U}$$
\end{definition}

Our goal is to prove that when $X$ is locally contractible $\sfN$ gives an equivalence of 2-categories between $\twomon(X)$ and $\St_{lc}(X)$; this is theorem \ref{twomonod}.

\subsection{A van Kampen theorem for the fundamental 2-groupoid}
\label{vkfun2gpd}

Let $X$ be a compactly generated Hausdorff space.  Let $\{U_i\}_{i \in I}$ be a $d$-cover of $X$.  (By this we just mean that $\{U_i\}_{i \in I}$ is an open cover of $X$ closed under finite intersections; then $I$ is partially ordered by inclusion.  See appendix A.)  An ideal van Kampen theorem would state that the 2-category $\pi_{\leq 2}(X)$ is the direct limit (or ``direct 3-limit'') of the 2-categories $\pi_{\leq 2}(U_i)$.  We do not wish to develop the relevant definitions here.  Instead, we will relate the 2-category $\pi_{\leq 2}(X)$ to the 2-categories $\pi_{\leq 2}(U_i)$ by studying 2-monodromy functors.  We will define a 2-category $\twomon\big(\{U_i\}_{i \in I}\big)$ of ``2-monodromy functors on the $d$-cover,'' and our van Kampen theorem will state that this 2-category is equivalent to $\twomon(X)$.

If $U$ is an open subset of $X$, the inclusion morphism $U \hookrightarrow X$ induces a 2-functor $\pi_{\leq 2}(U) \to \pi_{\leq 2}(X)$.  Let us denote by $(-)\vert_U$ the 2-functor $\twomon(X) \to \twomon(U)$ obtained by composing with $\pi_{\leq 2}(U) \to \pi_{\leq 2}(X)$.

\begin{definition}
Let $\{U_i\}_{i \in I}$ be a $d$-cover of $X$.  A \emph{2-monodromy functor} on $\{U_i\}_{i \in I}$ consists of the following data:
\begin{enumerate}
\item[(0)] For each $i \in I$, a 2-monodromy functor $F_i \in \twomon(U_i)$.
\item[(1)] For each $i,j \in I$ with $U_j \subset U_i$, an equivalence of 2-monodromy functors $F_i \vert_{U_j} \stackrel{\sim}{\to} F_j$.
\item[(2)] For each $i,j,k \in I$ with $U_k \subset U_j \subset U_i$, an isomorphism between the composite equivalence $F_i \vert_{U_j} \vert_{U_k} \stackrel{\sim}{\to} F_j \vert_{U_k} \stackrel{\sim}{\to} F_k$ and the equivalence $F_i \vert_{U_k} \stackrel{\sim}{\to} F_k$
\end{enumerate}
such that the following condition holds:
\begin{enumerate}
\item[(3)] For each $i,j,k,\ell \in I$ with $U_\ell \subset U_k \subset U_j \subset U_i$, the tetrahedron commutes:
$$\tetrah{F_i\vert_{U_j}\vert_{U_k}\vert_{U_\ell}}{F_j\vert_{U_k}\vert_{U_\ell}}{F_k\vert_{U_\ell}}{F_\ell}$$
\end{enumerate}
The 2-monodromy functors on $\{U_i\}_{i \in I}$ form the objects of a 2-category in a natural way.
\end{definition}

If $F$ is a 2-monodromy functor on $X$ then we may form a 2-monodromy functor on $\{U_i\}_{i \in I}$ by setting $F_i = F\vert_{U_i}$, and taking all the 1-morphisms and 2-morphisms to be identities.  This defines a 2-functor $\twomon(X) \to \twomon(\{U_i\})$; let us denote it by $\res$.

\begin{theorem}[van Kampen]
\label{vankampen}
Let $X$ be a compactly generated Hausdorff space, and let $\{U_i\}_{i \in I}$ be a $d$-cover of $X$.  The natural 2-functor $\res:\twomon(X) \to \twomon(\{U_i\}_{i \in I})$ is an equivalence of 2-categories.
\end{theorem}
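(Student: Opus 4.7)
The plan is to establish the equivalence in two parts: essential surjectivity up to equivalence, and, for each pair $F,G\in\twomon(X)$, the induced functor on hom-categories $\twonat(F,G)\to\twonat(\res F,\res G)$ being an equivalence. Both parts rest on a Lebesgue-type subdivision argument: using compactness of $[0,\lambda]$ or $[0,1]^2$, every Moore path $\gamma$ and every representative homotopy $H$ can be chopped into finitely many pieces each of which factors through some $U_i$, and any two such subdivisions admit a common refinement. Throughout, the assumption that $\{U_i\}$ is a $d$-cover is essential: the identifications at breakpoints are indexed by intersections, which themselves lie in the cover.

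For essential surjectivity, given a 2-monodromy functor $\{F_i,\phi_{ij},\psi_{ijk}\}$ on $\{U_i\}_{i\in I}$, I would assemble $F\in\twomon(X)$ as follows. On a point $x$, pick any $i$ with $x\in U_i$ and set $F(x):=F_i(x)$; the $\phi$'s and $\psi$'s make this canonically independent of the choice. On a Moore path $\gamma:[0,\lambda]\to X$, pick a subdivision $0=t_0<\cdots<t_n=\lambda$ and indices $i_1,\dots,i_n$ with $\gamma([t_{k-1},t_k])\subset U_{i_k}$, and define $F(\gamma)$ as the composite $F_{i_n}(\gamma_n)\circ\cdots\circ F_{i_1}(\gamma_1)$, inserting the equivalences $\phi$ at the breakpoints to match sources with targets. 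On a 2-morphism represented by $H:[0,1]^2\to X$, subdivide the square into a grid fine enough that $H$ factors through some $U_{i_{j,k}}$ on each rectangle, apply the local $F_{i_{j,k}}$, and paste the resulting 2-cells using $\psi$ along the grid edges. The tautological equivalences $F|_{U_i}\simeq F_i$ then assemble into an equivalence $\res F\simeq\{F_i\}$ in $\twomon(\{U_i\})$.

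For the hom-categories, a pseudo-natural transformation $\eta:F\to G$ restricts to a compatible family $\{\eta_i\}$ with coherence data, and modifications are similarly determined locally; full faithfulness is then routine since modifications are pointwise. For essential surjectivity on 1-morphisms, a compatible family $\{\eta_i\}$ glues to some $\eta:F\to G$ by the same subdivision-and-paste procedure: on a path $\gamma$, the component 2-cell $\eta_\gamma$ is pasted from the local $\eta_{i_k}$ on the pieces of a chosen subdivision, with independence again verified via common refinements. The main obstacle will be the coherence bookkeeping: one must verify that the assembled $F$ is really a 2-functor (composition preserved up to canonical iso, satisfying the pseudo-functor coherence pentagons), that it is well-defined up to canonical 2-isomorphism under change of subdivision and of indexing choices, and that the glued $\eta$ satisfies the naturality squares of a pseudo-natural transformation. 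The key technical observation is that the tetrahedron axiom in clause (3) of the definition is precisely what is needed to make any refinement of a subdivision induce a canonical 2-cell which is the identity on a trivial refinement; once this is established, independence of all the choices reduces to standard if lengthy diagram chases.
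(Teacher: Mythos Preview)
Your strategy is correct and matches the paper's in outline: prove that $\res$ is essentially surjective and essentially fully faithful, and do both by subdividing paths and homotopies into pieces lying in single charts. Two implementation choices differ from the paper's, and one point is underspecified.

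First, where you define $F(x):=F_i(x)$ for a chosen $i$, the paper instead sets $F(x):=\twocolim_{i\mid x\in U_i}F_i(x)$; since the indexing poset is filtered and the transition functors are equivalences, the colimit exists and each $F_i(x)\to F(x)$ is an equivalence. This buys canonicity for free and avoids the bookkeeping of tracking your choices of $i$ through all later constructions. Your choice-based variant works, but the colimit is cleaner.

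Second, and more interestingly, the paper does \emph{not} use a grid subdivision of $[0,1]^2$. Instead it introduces \emph{elementary homotopies}: a homotopy $h$ is $i$-elementary if there is a strip $[0,1]\times[a,b]$ outside of which $h$ is constant in the first variable, and whose image lies in $U_i$; pictorially, $h=1_{\gamma_1}\cdot h'\cdot 1_{\gamma_0}$ with $h'$ landing in $U_i$. The paper then proves (by triangulating the square and peeling off one triangle at a time) that any homotopy factors, up to homotopy, as a vertical composite of elementary ones. This reduces all the 2-dimensional gluing to a \emph{linear} sequence of small moves, so the pasting is just an ordinary composition in the hom-category rather than a 2-dimensional pasting diagram. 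Your grid approach is correct but forces you to manage a genuine 2-dimensional pasting and to check its coherence; the elementary-homotopy device sidesteps that.

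Finally, one point you do not explicitly address: a 2-morphism in $\pi_{\leq 2}(X)$ is a \emph{homotopy class} of homotopies, so you must check that two representatives $H,H'$ of the same class yield the same value of $F$. This needs a 3-dimensional subdivision argument (the paper's Proposition on elementary 3-homotopies). Your ``common refinement'' remark handles independence of the subdivision of a fixed $H$, but not independence of the representative; be sure to include this step.
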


We will prove this in section \ref{vksubsec}.  Let us first use this result to derive our 2-monodromy theorem.

\subsection{The 2-monodromy theorem}

\begin{theorem}
\label{twomonod}
Let $X$ be a compactly generated Hausdorff space, and let $F$ be a 2-monodromy functor on $X$.
The prestack $\sfN F$ is a stack.  Furthermore, if $X$ is locally contractible, the stack $\sfN F$ is locally constant, each stalk category $(\sfN F)_x$ is naturally equivalent to $F(x)$, and the 2-functor $\sfN: \twomon(X) \to \St_{lc}(X)$ is an equivalence of 2-categories.  
\end{theorem}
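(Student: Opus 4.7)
The plan is to establish the four assertions --- that $\sfN F$ is a stack, that the stalks are $(\sfN F)_x \simeq F(x)$, that $\sfN F$ is locally constant, and that $\sfN$ is an equivalence of 2-categories --- in order, using the van Kampen theorem \ref{vankampen} as the principal tool together with Theorem \ref{lcscrit} to recognize local constancy at the end. The stack property follows once one identifies $\sfN F(V) = \wtwolim{\pi_{\leq 2}(V)}{F|_V}$ with the category of 2-natural transformations from the constant 2-functor $*$ (with value the terminal category) to $F|_V$: for a d-cover $\{U_i\}_{i \in I}$ of $V$, the descent data for a section --- objects of $\sfN F(U_i)$, compatible transitions on $U_j \subset U_i$, and tetrahedral coherence --- matches exactly the data of a 1-morphism in $\twomon(\{U_i\})$ from $\res(*)$ to $\res(F|_V)$, and Theorem \ref{vankampen} identifies this Hom category with $\sfN F(V)$.

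For the stalk computation, assume $X$ is locally contractible and pick a fundamental system $\{U\}$ of contractible open neighborhoods of $x$. For each contractible $U$ the inclusion of the object $x$ induces an equivalence from the terminal 2-category to $\pi_{\leq 2}(U)$ --- every point of $U$ is connected to $x$ by a Moore path, and the Moore path spaces $P(x',y')$ in $U$ are contractible so their fundamental groupoids are trivial --- hence $\sfN F(U) = \wtwolim{\pi_{\leq 2}(U)}{F|_U} \simeq F(x)$. Passing to the 2-colimit over shrinking $U$ yields $(\sfN F)_x \simeq F(x)$, and for contractible $V \subset U$ the restriction $\sfN F(U) \to \sfN F(V)$ is an equivalence (both being equivalent to $F(x)$), so Theorem \ref{lcscrit} shows $\sfN F$ is locally constant.

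Finally, to prove $\sfN$ is an equivalence of 2-categories, I treat fully faithfulness and essential surjectivity separately, working over a d-cover $\{U_i\}$ of $X$ by contractible opens. For fully faithfulness, Theorem \ref{vankampen} writes 1- and 2-morphisms in $\twomon(X)$ as compatible families over $\{U_i\}$; contractibility of each $U_i$ reduces the local piece to a single functor $F(x_i) \to G(x_i)$ (respectively a natural transformation), which assembles into exactly the Hom data in $\St_{lc}(X)$ produced by the stalk computation and Proposition \ref{constantstacks2}. For essential surjectivity, given a locally constant stack $\scrC$, refine $\{U_i\}$ so that each $\scrC|_{U_i}$ is equivalent to a constant stack $(\scrC_{x_i})_{U_i}$ for a chosen basepoint $x_i$ (possible on contractible $U_i$ by Corollary \ref{corlcshotinv}); the transition equivalences on $U_j \subset U_i$ and their coherence on nested chains assemble into a 2-monodromy functor on the d-cover, and Theorem \ref{vankampen} lifts this to some $F \in \twomon(X)$ with $\sfN F \simeq \scrC$ (the comparison is an equivalence on stalks by the computation above, hence of stacks). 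I expect the main obstacle to be this essential-surjectivity step: carefully extracting the 2-isomorphisms and tetrahedral coherence data for the cover-level 2-monodromy functor from the stack structure of $\scrC$ --- exactly the bookkeeping that Theorem \ref{vankampen} is engineered to handle.
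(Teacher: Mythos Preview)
Your proposal is correct and follows essentially the same route as the paper: van Kampen (Theorem~\ref{vankampen}) for the stack property via the identification $\sfN F(V)\simeq\Hom_{\twomon(V)}(*,F|_V)$, triviality of $\pi_{\leq 2}(U)$ on contractible $U$ for the stalk computation and local constancy via Theorem~\ref{lcscrit}, and van Kampen again for essential surjectivity by assembling trivializations over a $d$-cover into a cover-level 2-monodromy functor. The only organizational difference is that the paper packages the fully-faithfulness argument by introducing the auxiliary prestack $\sfN(G,F):U\mapsto\Hom_{\twomon(U)}(G|_U,F|_U)$, proving in one stroke that it is a locally constant stack with stalk $\Funct(G(x),F(x))$, and then checking that $\sfN(G,F)\to\shHom(\sfN G,\sfN F)$ is an equivalence on stalks; this is slightly slicker than your direct matching of Hom data over the cover, but the content is the same.
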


\begin{proof}
Let $G$ be another 2-monodromy functor on $X$, and let $\sfN(G,F)$ be the prestack
$U \mapsto \Hom_{\twomon(U)}(G\vert_U,F\vert_U)$.  It is useful to show that $\sfN(G,F)$ is a stack; we obtain that $\sfN F = \sfN(*,F)$ is a stack as a special case.

Let $U \subset X$ be an open set, and let $\{U_i\}_{i \in I}$ be a $d$-cover of $U$.  To see that the natural functor 
$$\sfN(G,F)(U) \to \wtwolim{I}{\sfN(G,F)(U_i)}$$
is an equivalence of categories note that $\twolim_I{\sfN(G,F)(U_i)}$ is equivalent to the category of 1-morphisms from $\res(G\vert_U)$ to $\res(F\vert_U)$.  Here $\res(G\vert_U)$ and $\res(F\vert_U)$ denote the 2-monodromy functors on the $d$-cover $\{U_i\}$ induced by the 2-monodromy functors $G\vert_U$ and $F\vert_U$ on $U$.  By theorem \ref{vankampen}, $\res$ induces an equivalence on hom categories.  Thus $\sfN(G,F)$ is a stack.

Let $U$ and $V$ be contractible open subsets of $X$ with $V \subset U$.  Then both $\pi_{\leq 2}(V)$ and $\pi_{\leq 2}(U)$ are trivial, so $\pi_{\leq 2}(V) \to \pi_{\leq 2}(U)$ in an equivalence.  It follows that $\sfN (G,F)(U) \to \sfN (G,F)(V)$ is an equivalence of categories.  If $X$ is locally contractible then by theorem \ref{lcscrit} $\sfN (G,F)$ is locally constant.  In fact if $U$ is contractible and $x \in U$, the triviality of the 2-category $\pi_{\leq 2}(U)$ shows that $\sfN(G,F)(U)$ is naturally equivalent to the category of functors from $G(x)$ to $F(x)$, thus the stalk $\sfN(G,F)_x$ is equivalent to $\Funct(G(x),F(x))$. 

Now suppose $X$ is locally contractible, and let us show that $\sfN: \twomon(X) \to \St_{lc}(X)$ is essentially fully faithful: we have to show that $\Hom_{\twomon(X)}(G,F) \to \Hom_{\St(X)}(\sfN G, \sfN F)$ is an equivalence of categories.  In fact we will show that the morphism of stacks $\sfN(G,F) \to \shHom(\sfN G,\sfN F)$ is an equivalence.  (Here $\shHom(\sfN G,\sfN F)$ is the stack on $X$ that takes an open set $U$ to the category of 2-natural transformations $\shHom(\sfN G(U), \sfN F(U))$.)  It suffices to show that each of the functors $\sfN(G,F)_x \to \shHom(\sfN G,\sfN F)_x$ between stalks is an equivalence of categories; both these categories are naturally equivalent to the category of functors $\Funct\big(G(x),F(x)\big)$.

Finally let us show that $\sfN:\twomon(X) \to \St_{lc}(X)$ is essentially surjective.  For each 1-category $\bC$, if $F$ is the constant $\bC$-valued 2-monodromy functor on $X$ then $\sfN F$ is the constant stack with fiber $\bC$: the obvious map from the constant prestack $\bC_{p;X}$ to $\sfN F$ induces an equivalence on stalks.  Thus every constant stack is in the essential image of $\sfN$.  Let $\scrC$ be a locally constant stack on $X$, and let $\{U_i\}_{i \in I}$ be a $d$-cover of $X$ over which $\scrC$ trivializes.  Then we may form a 2-monodromy functor on the $d$-cover as follows: for each $i\in I$ we may find an $F_i$ (a constant functor) and an equivalence $\sfN F_i \cong \scrC\vert_{U_i}$; then for each $i,j \in I$ we may form the composite equivalence $F_i \vert_{U_j} \cong \scrC \vert_{U_i} \vert_{U_j} = \scrC \vert_{U_j} \cong F_j$; etc.  By theorem \ref{vankampen}, this descends to a 2-monodromy functor $F$ on $X$, and $\sfN F$ is equivalent to $\scrC$.  This completes the proof.
\end{proof}

\subsection{The proof of the van Kampen theorem}
\label{vksubsec}

Before proving theorem \ref{vankampen} let us discuss homotopies in more detail.  We wish to show that any 2-morphism in $\pi_{\leq 2}(X)$ may be factored into smaller 2-morphisms, where ``small'' is interpreted in terms of an open cover of $X$.  

\begin{definition}
Let $X$ be a compactly generated Hausdorff space.  Let $\{U_i\}_{i \in I}$ be a $d$-cover of $X$.  A homotopy $h:[0,1] \times [0,1] \to X$ is \emph{$i$-elementary} if there is a subinterval $[a,b] \subset [0,1]$ such that $h(s,t)$ is independent of $s$ so long as $t \notin [a,b]$, and such that the image of $[0,1] \times [a,b] \subset [0,1] \times [0,1]$ under $h$ is contained in $U_i$.  If a homotopy $h$ is $i$-elementary for some unspecified $i \in I$ then we will simply call $h$ \emph{elementary}.
\end{definition}

Let $X$ and $\{U_i\}$ be as in the definition.  Let $x,y \in X$ be points, $\alpha,\beta \in P(x,y)$ be Moore paths, and let $h:[0,1] \times [0,1] \to X$ be a homotopy from $\alpha$ to $\beta$.  (See remark \ref{moorehot}.)  Suppose we have paths $\gamma_0,\gamma_1,\alpha'$ and $\beta'$, and a homotopy $h':\alpha' \to \beta'$, such that $\alpha = \gamma_1 \cdot \alpha' \cdot \gamma_0$, $\beta = \gamma_1 \cdot \beta' \cdot \gamma_0$, and $h = 1_{\gamma_1} \cdot h' \cdot 1_{\gamma_0}$.  

\begin{center}
\setlength{\unitlength}{.05cm}
\begin{picture}(0,70)
\label{bead}
\linethickness{.5mm}

\put(0,21){\qbezier(0,0)(-18,10)(0,30)}
\put(0,21){\qbezier(0,0)(15,9)(0,30)}

\put(0,4){\line(0,1){17}}
\put(0,51){\line(0,1){17}}
\put(-3,32){$h'$}
\put(-17,34){$\alpha'$}
\put(10,32){$\beta'$}

\put(-9,10){$\gamma_0$}
\put(-9,59){$\gamma_1$}

\end{picture}
\end{center}

Then $h$ is an $i$-elementary homotopy if and only if the image of $h'$ lies in $U_i$.  Any $i$-elementary homotopy may be written as $\gamma_1 \cdot h' \cdot \gamma_0$ for some $\gamma_0$, $h'$, $\gamma_1$.  

\begin{proposition}
\label{elhotfactor}
Let $X$ be a compactly generated Hausdorff space, and let $\{U_i\}_{i \in I}$ be a $d$-cover of $X$.
Let $\alpha$ and $\beta$ be two Moore paths from $x$ to $y$, and let $h:[0,1] \times [0,1] \to X$ be a homotopy from $\alpha$ to $\beta$.  Then there is a finite list $\alpha = \alpha_0, \alpha_1, \ldots, \alpha_n = \beta$ of Moore paths from $x$ to $y$, and of homotopies $h_1:\alpha_0 \to \alpha_1$, $h_2:\alpha_1 \to \alpha_2$, \ldots $h_n:\alpha_{n-1} \to \alpha_n$ such that $h$ is homotopic to $h_n \circ h_{n-1} \circ \ldots \circ h_1$, and such that each $h_i$ is elementary.
\end{proposition}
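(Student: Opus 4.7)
The plan is to subdivide $[0,1]^2$ along a grid compatible with the cover and decompose $h$ into elementary homotopies that each act only on a single grid cell; the final and hardest step is to show that the resulting composition is homotopic to $h$ as a 2-morphism in $\pi_{\leq 2}(X)$.

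First, I apply the Lebesgue number lemma to the open cover $\{h^{-1}(U_i)\}_{i\in I}$ of the compact square $[0,1]^2$ to obtain subdivisions $0=s_0<s_1<\cdots<s_n=1$ and $0=t_0<t_1<\cdots<t_m=1$ fine enough that each rectangle $R_{k,j}=[s_k,s_{k+1}]\times[t_j,t_{j+1}]$ satisfies $h(R_{k,j})\subset U_{i(k,j)}$ for some $i(k,j)\in I$. For each $0\leq k\leq n-1$ and $0\leq j\leq m$, let $p_{k,j}$ be the L-shaped curve in $[0,1]^2$ that runs from $(s_{k+1},0)$ up to $(s_{k+1},t_j)$, then left to $(s_k,t_j)$, then up to $(s_k,1)$, and set $\alpha_{k,j}=h\circ p_{k,j}$. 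The extreme paths $\alpha_{k,0}$ and $\alpha_{k,m}$ agree with $h(s_k,-)$ and $h(s_{k+1},-)$ up to constant segments at $x$ or $y$, and the mismatch between $\alpha_{k,m}$ and $\alpha_{k+1,0}$ is of the same nature.

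For fixed $k$, the two paths $\alpha_{k,j}$ and $\alpha_{k,j+1}$ share a common initial portion up the line $s=s_{k+1}$ as far as $(s_{k+1},t_j)$ and a common final portion from $(s_k,t_{j+1})$ up to $(s_k,1)$. They differ only on a middle arc from $(s_{k+1},t_j)$ to $(s_k,t_{j+1})$, which traverses one or the other of the two half-boundaries of $R_{k,j}$. Because $R_{k,j}$ is contractible, these two arcs are homotopic rel endpoints inside $R_{k,j}$, and composing such a homotopy with $h$ produces an elementary homotopy $h_{k,j}\colon\alpha_{k,j}\to\alpha_{k,j+1}$ with image in $U_{i(k,j)}$. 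The transitional steps $\alpha_{k,m}\to\alpha_{k+1,0}$ only adjust constant segments at $x$ and $y$, so they can be realized by elementary homotopies whose image is a single point (contained in any chart of the cover that meets $x$ or $y$). Concatenating all of these pieces yields a composition $h'\colon\alpha\to\beta$ of elementary homotopies.

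It remains to show that $h$ and $h'$ represent the same 2-morphism. The crucial point is that by construction each $h_{k,j}$ and each transitional piece may be chosen of the form $h\circ\eta$ for a rel-endpoint homotopy $\eta$ of arcs taking values inside the relevant cell $R_{k,j}$, or along the top or bottom edge of $[0,1]^2$. Assembling these pieces gives a continuous map $\psi\colon[0,1]^2\to[0,1]^2$ such that $h'=h\circ\psi$, with $\psi$ fixing the left and right edges pointwise and sending the top and bottom edges into themselves. Since $[0,1]^2$ is convex, the straight-line homotopy $\Psi(s,t,u)=(1-u)(s,t)+u\psi(s,t)$ from $\mathrm{id}_{[0,1]^2}$ to $\psi$ stays inside $[0,1]^2$ and respects all of these boundary conditions, so $\Phi=h\circ\Psi$ is the desired 3-parameter homotopy between $h$ and $h'$ rel boundary. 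The main obstacle I expect is the bookkeeping: parameterizing the Moore paths $\alpha_{k,j}$ so that concatenation at the column transitions really makes sense, and explicitly assembling $\psi$ with the stated boundary behavior; the underlying geometry reduces to the contractibility of each $R_{k,j}$ and the convexity of $[0,1]^2$.
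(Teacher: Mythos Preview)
Your argument is correct and is a genuine alternative to the paper's proof. The paper triangulates $[0,1]^2$ finely with respect to the cover and then peels off one triangle at a time: by induction on the number of triangles, a boundary triangle $\sigma$ is removed via a homeomorphism $\eta$ between the complement of $\sigma$ and a new square, the piece $\sigma\cup(\{0\}\times[0,1])$ is explicitly parameterized as a single elementary homotopy, and the homotopy between $h$ and the composed factorization is obtained from the mapping cylinders of these reparameterizations. You instead use a rectangular grid from the Lebesgue number lemma, interpolate by L-shaped staircase paths that cross one cell at a time, and---most cleanly---observe that the entire composite $h'$ factors as $h\circ\psi$ for some self-map $\psi$ of the square with the right boundary behavior, so that convexity of $[0,1]^2$ gives the 3-homotopy by straight lines.

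Your approach is arguably more elementary here: no triangulation is needed, and the straight-line argument for $h\simeq h'$ is slicker than tracking mapping cylinders through an induction. The paper's triangulation approach, on the other hand, is what actually gets reused later in the stratified setting (Proposition~\ref{prop97}), where one needs simplices whose interiors land in single strata and where the barycentric-subdivision trick guarantees a parameterization of each simplex by exit paths. Your grid-and-staircase argument does not adapt as directly to that situation, since a rectangle of a fine grid need not meet the stratification in a controlled way. So the trade-off is: your route is cleaner for the unstratified proposition at hand, while the paper's route is chosen with the stratified analogue in mind.
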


\begin{proof}
Pick a continuous triangulation of $[0,1] \times [0,1]$ with the property that each triangle is mapped by $h$ into one of the $U_i$.  Let $n$ be the number of triangles, and suppose we have constructed an appropriate factorization whenever the square may be triangulated with fewer than $n$ triangles.  Pick
an edge along $\{0\} \times [0,1]$; this edge is incident with a unique triangle $\sigma$, as in the diagram
\begin{center}
\setlength{\unitlength}{.12cm}

\begin{picture}(0,30)
\linethickness{.5mm}

\put(-10,5){\line(0,1){20}}
\put(10,5){\line(0,1){20}}
\put(-10,25){\line(1,0){20}}
\put(-10,5){\line(1,0){20}}

\put(-9.5,17){$\sigma$}

\put(-10,22){\line(1,-1){4.1}}
\put(-10,14){\line(1,1){4.1}}

\end{picture}
\end{center}
We may find a homeomorphism $\eta$ between the complement of $\sigma$ in this square with another square such that the composition  
$$[0,1] \times [0,1] \stackrel{\eta}{\cong} \text{closure}\big([0,1] \times [0,1] - \sigma\big) \to X$$
may be triangulated with $n-1$ triangles.  Let us denote this composition by $g$.  On the other hand it is clear how to parameterize the union of $\sigma$ and $\{0\} \times [0,1]$ by an elementary homotopy:
\begin{center}
\setlength{\unitlength}{.04cm}
\begin{picture}(0,70)
\linethickness{.5mm}

\put(-40,21){\qbezier(0,0)(-18,10)(0,30)}
\put(-40,21){\qbezier(0,0)(15,9)(0,30)}

\put(-40,4){\line(0,1){17}}
\put(-40,51){\line(0,1){17}}

\put(30,5){\line(0,1){60}}
\put(30,51){\line(1,-1){15}}
\put(30,21){\line(1,1){15}}

\put(-10,35){$\longrightarrow$}

\end{picture}
\end{center}
Let us write $k:[0,1] \times [0,1] \to X$ for the composition of this parameterization with $h$.  Now $k$ is an elementary homotopy and $g$ may be factored into elementary homotopies by induction.  The mapping cylinders on the homeomorphism $\eta$ and the parameterization of $\sigma \cup \{0\} \times [0,1]$ form a homotopy between $h$ and $g \circ k$.
\end{proof}

We also need a notion of elementary 3-dimensional homotopy.

\begin{definition}
\label{3delhot}
Let $X$ be a compactly generated Hausdorff space, and let $\{U_i\}_{i \in I}$ be a $d$-cover of $X$.  Let $x, y \in X$, $\alpha, \beta \in P(x,y)$, and let $h_0,h_1:[0,1] \times [0,1] \to X$ be homotopies from $\alpha$ to $\beta$.  A homotopy $t \mapsto h_t$ between $h_0$ and $h_1$ is called \emph{$i$-elementary} if there is a closed rectangle $[a,b] \times [c,d] \subset [0,1] \times [0,1]$ such that 
\begin{enumerate}
\item $h_t(u,v)$ is independent of $t$ for $(u,v) \notin [a,b] \times [c,d]$
\item For each $t$, $h_t([a,b] \times [c,d]) \subset U_i$.
\end{enumerate}
\end{definition}

\begin{proposition}
\label{3delhotfactor}
Let $X$, $\{U_i\}_{i \in I}$, $x,y,\alpha,\beta$ be as in definition \ref{3delhot}.  Let $h$ and $g$ be homotopies from $\alpha$ to $\beta$.  Suppose that $h$ and $g$ are homotopic.  Then there is a sequence $h = k_0,k_1,\ldots,k_n = g$ of homotopies from $\alpha$ to $\beta$ such that $k_i$ is homotopic to $k_{i+1}$ via an elementary homotopy.
\end{proposition}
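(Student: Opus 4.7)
The strategy parallels the proof of Proposition \ref{elhotfactor}, one dimension higher. Fix a 3-homotopy $H:[0,1]^2\times[0,1]\to X$ from $h$ to $g$, with the last coordinate being the time of the 3-homotopy. By compactness of $[0,1]^3$ and continuity of $H$, pick a continuous cubical subdivision of $[0,1]^3$ fine enough that each closed cube is mapped by $H$ into some $U_i$. A cubical (rather than simplicial) subdivision is used because the support of an elementary 3-homotopy in the $(u,v)$-plane is required to be a rectangle.

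I would then induct on the number of cubes. The base case of one cube is immediate: $H$ is itself elementary, with support rectangle $[0,1]^2$. For the inductive step, pick a cube $\sigma$ incident to the bottom face $[0,1]^2\times\{0\}$; its bottom face has the form $R\times\{0\}$ for some rectangle $R\subset [0,1]^2$. Following the 2-dimensional argument, choose a homeomorphism $\eta$ from $[0,1]^3$ to the closure of $[0,1]^3\setminus\sigma$ that is the identity on the top face and on the lateral faces $\partial[0,1]^2\times[0,1]$. The latter is possible because $H$ is already constant in $t$ on those lateral faces, being determined by the fixed boundary data $\alpha,\beta,x,y$. The composition $H\circ\eta$ is a 3-homotopy from a new 2-homotopy $h_1:[0,1]^2\to X$ to $g$, where $h_1$ agrees with $h$ on $[0,1]^2\setminus R$ and reparameterizes the top and side faces of $\sigma$ on $R$. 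The induced cubical subdivision has one fewer cube, so the inductive hypothesis produces a sequence $h_1=k_1,k_2,\ldots,k_n=g$ of 2-homotopies with each consecutive pair related by an elementary 3-homotopy.

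It then remains to construct an elementary 3-homotopy $K:[0,1]^3\to X$ from $h$ to $h_1$. Parameterize $\sigma$ together with $([0,1]^2\setminus R)\times\{0\}$ as follows: outside $R$, declare $K$ to be constant in $t$ and equal to the common value of $h$ and $h_1$; inside $R$, identify $\sigma$ with $R\times[0,1]$ and use this identification to interpolate from $h|_R$ at $t=0$ to $h_1|_R$ at $t=1$. By construction the support of $K$ in the $(u,v)$-plane is the rectangle $R$, and $K(R\times[0,1])=H(\sigma)\subset U_i$, so $K$ is $i$-elementary. Setting $k_0=h$ yields the desired sequence $h=k_0,k_1,\ldots,k_n=g$.

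The main obstacle is ensuring that the cellular structure transforms compatibly under the reparameterization $\eta$ so that the induction can proceed: on the new bottom face $\eta^{-1}([0,1]^2\times\{0\})$, the cells coming from $[0,1]^2\setminus R$ and from the top and side faces of $\sigma$ must all be rectangles of $[0,1]^2$, so that later peelings will again yield elementary 3-homotopies with rectangular support. This is the analog of the continuous re-triangulation used in the proof of Proposition \ref{elhotfactor} and can be arranged by choosing $\eta$ to map the faces on the new bottom surface to axis-aligned rectangles in the new $[0,1]^2$, using that a disk decomposed into finitely many rectangles admits such a straightening homeomorphism.
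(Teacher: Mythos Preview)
Your approach differs from the paper's. The paper first observes that the homotopy between $h$ and an elementary factorization $h_m\circ\cdots\circ h_1$ produced in the proof of Proposition~\ref{elhotfactor} is itself realized by a sequence of elementary 3-homotopies, and similarly for $g$. This reduces the problem to the case where $h$ and $g$ are each single elementary 2-homotopies; only then does the paper triangulate $[0,1]^3$ and peel off simplices ``just as in Proposition~\ref{elhotfactor}.'' You instead attack $[0,1]^3$ directly with a cubical subdivision, motivated by the rectangular-support requirement in Definition~\ref{3delhot}. That motivation is reasonable and in fact highlights a point the paper's sketch leaves implicit.

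The obstacle you flag in your last paragraph is the genuine difficulty, and your proposed resolution is incomplete. Straightening the new bottom surface so that its cells become axis-aligned rectangles controls only the 2-dimensional data; it does not ensure that the remaining 3-cells become boxes in the new coordinates. Concretely, a cube $\sigma'$ sharing a lateral face with $\sigma$ now has \emph{two} of its faces lying on the new bottom (its original bottom face together with the shared wall), so $\eta^{-1}(\sigma')$ meets $[0,1]^2\times\{0\}$ in a union of two rectangles rather than one. Whether that union is itself a single rectangle depends on the particular straightening chosen, and even when it is, the same bookkeeping must be propagated through every subsequent peel. If you restrict to a genuine product grid and fix a definite peeling order this can be made to work, but it requires a sharper inductive invariant than ``the bottom face carries a rectangular decomposition.'' The paper's preliminary reduction via Proposition~\ref{elhotfactor} is one way of sidestepping this 3-dimensional bookkeeping.
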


\begin{proof}
Note that the homotopy between $h$ and a factorization $h_m \circ \ldots \circ h_1$ constructed in propostion \ref{elhotfactor} is given by a sequence of elementary 3-dimensional homotopies.  Thus we may assume that $h$ is of the form $h_m \circ \ldots \circ h_1$, where each $h_i$ is an elementary homotopy, and that $g$ is of the form $g_\ell \circ \ldots \circ g_1$ where each $g_i$ is an elementary homotopy.  By induction we may reduce to the case where $m = \ell = 1$ so that $h$ and $g$ are both elementary.  Suppose that $H:[0,1] \times [0,1] \times [0,1]$ is a homotopy between $h$ and $g$.
We may triangulate $[0,1] \times [0,1] \times [0,1]$ in such a way that each simplex $\sigma$ is carried by $H$ into one of the charts $U_i$.  We may use these simplices to factor $H$ just as in proposition \ref{elhotfactor}.
\end{proof}

Now we may prove theorem \ref{vankampen}.  To show that $\res:\twomon(X) \to \twomon\big(\{U_i\}\big)$ is an equivalence of 2-categories it suffices to show that $\res$ is essentially fully faithful and essentially surjective. This is the content of the following three propositions.  

\begin{proposition}
\label{res2full}
Let $X$ be a compactly generated Hausdorff space, and let $\{U_i\}_{i \in I}$ be a $d$-cover of $X$.  Let $F$ and $G$ be two 2-monodromy functors on $X$.  The functor $\Hom(F,G) \to \Hom\big(\res(F),\res(G)\big)$ induced by $\res$ is fully faithful.
\end{proposition}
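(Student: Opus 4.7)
The plan is to unwind both hom categories to pointwise data and then verify the modification axiom via a Lebesgue subdivision of paths. A 2-morphism $\Xi:\eta\Rightarrow\mu$ between 2-natural transformations $\eta,\mu:F\to G$ in $\twomon(X)$ is a modification, i.e.\ a family of natural transformations $\Xi_x:\eta_x\to\mu_x$ indexed by points $x\in X$, subject to the modification axiom for every Moore path $\alpha:x\to y$. Because $\res F$ and $\res G$ have identity transition data, a 2-morphism $\Xi^{\mathrm{cov}}:\res(\eta)\Rightarrow\res(\mu)$ in $\twomon(\{U_i\}_{i\in I})$ is simply a family of modifications $\Xi_i:\eta\vert_{U_i}\Rightarrow\mu\vert_{U_i}$ satisfying $\Xi_j = \Xi_i\vert_{U_j}$ whenever $U_j\subset U_i$.

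With this description in hand, faithfulness is immediate: if $\res(\Xi) = \res(\Xi')$, then $\Xi_x = \Xi'_x$ at every point of every $U_i$, and hence at every point of $X$ since $\{U_i\}$ covers $X$. For fullness, given a compatible family $\{\Xi_i\}$ as above, I would define $\tilde\Xi_x := (\Xi_{i})_x$ for any $i$ with $x\in U_i$. This is well-defined precisely because $\{U_i\}_{i\in I}$ is a $d$-cover (closed under finite intersections): for any two indices $i, i'$ with $x\in U_i\cap U_{i'}$, the intersection $U_j = U_i\cap U_{i'}$ is itself in the cover, and the compatibility gives $(\Xi_i)_x = (\Xi_j)_x = (\Xi_{i'})_x$.

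It then remains to verify that $\tilde\Xi$ satisfies the modification axiom for an arbitrary Moore path $\alpha:x\to y$ in $X$. Using compactness of the parameterizing interval and a Lebesgue-number argument applied to the open cover $\{\alpha^{-1}(U_i)\}$, I would subdivide $\alpha$ as a composition $\alpha = \alpha_n\cdot\cdots\cdot\alpha_1$ of Moore sub-paths, each whose image is contained in some $U_{i_k}$. Since the compositors of the 2-functors $F$ and $G$ paste the naturality 2-cells $\eta_{\alpha_k}$ (resp.\ $\mu_{\alpha_k}$) into $\eta_\alpha$ (resp.\ $\mu_\alpha$), the modification axiom for $\tilde\Xi$ at $\alpha$ reduces by a diagram chase to the modification axioms for $\tilde\Xi$ at each piece $\alpha_k$; but these hold since $\Xi_{i_k}$ is a modification on $U_{i_k}$ and $\alpha_k$ lies there. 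The main obstacle is this last pasting argument: one needs to verify carefully that the modification axiom is genuinely multiplicative under composition of 1-morphisms, taking proper account of the coherence 2-cells of $F$ and $G$. This is routine but notationally heavy, and I expect it to occupy the bulk of the write-up.
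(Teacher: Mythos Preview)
Your proposal is correct and follows essentially the same route as the paper's proof: faithfulness is immediate from the pointwise nature of modifications, and fullness is obtained by setting $\tilde\Xi_x = (\Xi_i)_x$, checking independence of $i$ via the compatibility conditions, and then verifying the modification axiom on an arbitrary path by Lebesgue-subdividing it into pieces each lying in a single chart. The pasting step you flag as the ``main obstacle'' is in fact routine and is dispatched in a single sentence in the paper --- once each small square commutes, the big square for the concatenated path commutes by horizontal stacking --- so you should not expect it to dominate the write-up.
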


\begin{proof}
Let $n$ and $m$ be two 2-natural transformations $F \to G$, and let $\phi$ and $\psi$ be two modifications $n \to m$.  Since we have $\phi = \psi$ if and only if $\phi_x = \psi_x$ for each $x \in X$, and since $\phi_x = \res(\phi)_{x,i}$ and $\psi_x = \res(\psi)_{x,i}$ whenever $x \in U_i$, we have $\res(\phi) = \res(\psi)$ if and only if $\phi = \psi$.  This proves that the functor induced by $\res$ is faithful.

Let $n$ and $m$ be as before, and now let $\{\phi_{x,i}\}$ be a 2-morphism between $\res(n)$ and $\res(m)$.  The 2-morphisms $\phi_{x,i}:n(x) \to m(x)$ are necessarily independent of $i$, since whenever $x \in U_j \subset U_i$ we have $\phi_{x,j} \circ 1_{n(x)} = 1_{m(x)} \circ \phi_{x,i}$.  Write $\phi_x = \phi_{x,i}$ for this common value.  Since $\{\phi_{x,i}\}$ is a 2-morphism in $\twomon\big(\{U_i\}_{i \in I}\big)$, every path $\gamma:x \to y$ whose image is contained in one of the $U_i$ induces a commutative diagram
$$
\xymatrix{
n(x) \ar[r]^{n(\gamma)} \ar[d]_{\phi_x} & n(y) \ar[d]^{\phi_y} \\
m(x) \ar[r]_{m(\gamma)} & m(y)
}
$$
It follows that this diagram commutes for every path $\gamma$, since every $\gamma$ may be written as a concatenation $\gamma_N \cdot \ldots \cdot \gamma_1$ of paths $\gamma_k$ with the property that for each $k$ there is an $i$ such that the image of $\gamma_k$ is contained in $U_i$.  Thus, $x \mapsto \phi_x$ is a 2-morphism $n \to m$, and $\res\big( \{\phi_x\}\big) = \{\phi_{x,i}\}$, so the functor induced by $\res$ is full.
\end{proof}

\begin{proposition}
\label{resessfull}
Let $X$, $\{U_i\}_{i \in I}$, $F$, and $G$ be as in proposition \ref{res2full}.  The functor $\Hom(F,G) \to \Hom\big( \res(F),\res(G)\big)$ induced by $\res$ is essentially surjective.
\end{proposition}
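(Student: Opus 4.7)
The plan is to construct, from a given 1-morphism $\nu$ from $\res(F)$ to $\res(G)$, a 2-natural transformation $n: F \to G$ with $\res(n) \cong \nu$. The data of $\nu$ consists of a 2-natural transformation $n_i: F|_{U_i} \to G|_{U_i}$ for each $i \in I$, together with compatibility modifications $\eta_{ji}: n_i|_{U_j} \cong n_j$ whenever $U_j \subset U_i$, satisfying a tetrahedral cocycle condition analogous to axiom (3) of the definition.

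First I would fix a choice function $\iota: X \to I$ with $x \in U_{\iota(x)}$, and set $n(x) := n_{\iota(x)}(x)$. The data $\{\eta_{ji}\}$ then produce canonical isomorphisms $n_i(x) \cong n(x)$ for every $i$ with $x \in U_i$, compatible under further specialization by the tetrahedral axiom. To define $n$ on a 1-morphism $\gamma: x \to y$ of $\pi_{\leq 2}(X)$, I would factor $\gamma = \gamma_N \cdots \gamma_1$ as a concatenation of paths each contained in some chart $U_{i_k}$ (possible by compactness of the parameter interval), and splice the local 2-morphisms $n_{i_k}(\gamma_k)$ together using the canonical change-of-chart isomorphisms to produce a single 2-morphism $n(\gamma): G(\gamma) \circ n(x) \Rightarrow n(y) \circ F(\gamma)$. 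Independence from the chosen factorization reduces, via common refinements, to the 2-naturality of a single $n_i$ on a subpath lying inside $U_i$; compatibility with composition of 1-morphisms is then immediate.

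For compatibility with 2-morphisms $h: \gamma \Rightarrow \gamma'$, I would invoke Proposition \ref{elhotfactor} to factor $h$ into elementary pieces. An $i$-elementary homotopy can be written as $1_{\gamma_R} \cdot h' \cdot 1_{\gamma_L}$ with the image of $h'$ contained in $U_i$, and the required equation reduces to the 2-naturality of $n_i$ applied to $h'$. Proposition \ref{3delhotfactor} guarantees that this value is independent of the chosen elementary factorization. The equivalence $\res(n) \cong \nu$ is finally provided by a modification whose component at $x \in U_i$ is the canonical isomorphism $n_i(x) \cong n(x)$, with compatibility on triple overlaps supplied by the tetrahedral axiom.

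The main obstacle will be the 2-morphism coherence step: Propositions \ref{elhotfactor} and \ref{3delhotfactor} must dovetail so that neither the factorization of $\gamma$ and $\gamma'$ into chart-sized segments nor the factorization of $h$ into elementary homotopies affects the final value of $n(h)$. Everything else is essentially bookkeeping with the structural isomorphisms $\eta_{ji}$, in the same spirit as the fullness argument in Proposition \ref{res2full}.
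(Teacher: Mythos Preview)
Your proposal is correct and follows essentially the same route as the paper's proof: define $n(x)$ pointwise from the local data, define $n(\gamma)$ by factoring paths into chart-sized pieces, and verify the 2-naturality square using Proposition~\ref{elhotfactor} to reduce to elementary homotopies. The only cosmetic difference is that the paper sets $n(x) := \varinjlim_{i \mid x \in U_i} n_i(x)$ as a filtered colimit rather than fixing a choice function $\iota$; since all transition maps $\rho_{ij;x}$ are isomorphisms this amounts to the same thing, and the colimit formulation makes the canonical comparison isomorphisms $n_i(x) \to n(x)$ automatic without appealing to the tetrahedral axiom. One small remark: your invocation of Proposition~\ref{3delhotfactor} is unnecessary here, since the 2-naturality square is an equation to be checked rather than a value to be defined---once it commutes for one elementary factorization of $h$ you are done, and there is nothing whose well-definedness depends on the choice of factorization.
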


\begin{proof}

Let $\{n_i\}$ be a 1-morphism $\res(F) \to \res(G)$.  For each $i$ and each $x$ with $x \in U_i$ we are given a functor $n_i(x):F(x) \to G(x)$, and for each $j$ with $x \in U_j \subset U_i$ we are given a natural isomorphism $\rho_{ij;x}:n_i(x) \stackrel{\sim}{\to} n_j(x)$ which makes certain diagrams commute.  In particular, if we have $x \in U_k \subset U_j \subset U_i$, then $\rho_{jk;x} \circ \rho_{ij;x} = \rho_{ik;x}$.  Let us take
$$n(x) := \varinjlim_{i \in I \, \mid \, U_i \owns x} n_i(x)$$
Since the limit is filtered and each $n_i(x) \to n_j(x)$ is an isomorphism, the limit exists and all the natural maps $n_i(x) \to n(x)$ are isomorphisms. 

To show that $\{n_i\}$ is in the essential image of $\res$ we will extend the assignment $x \mapsto n(x)$
to a 1-morphism $F \to G$.  To do this we need to define an isomorphism $n(\gamma):G(\gamma) \circ n(x) \stackrel{\sim}{\to} n(y) \circ F(\gamma)$ for every path $\gamma$ starting at $x$ and ending at $y$.  In case the image of $\gamma$ is entirely contained in $U_i$ for some $i$, define $n(\gamma)$ to be the composition
$$G(\gamma) \circ n(x) \stackrel{\sim}{\leftarrow} G(\gamma) \circ n_i(x) \stackrel{n_i(\gamma)}{\to} n_i(y) \circ F(\gamma) \stackrel{\sim}{\to} n(y) \circ F(\gamma)$$
By naturality of the morphisms $\rho_{ij;x}$, this map is independent of $i$.  For general $\gamma$ we may find a factorization $\gamma = \gamma_1 \cdot \ldots \cdot \gamma_N$ where each $\gamma_k$ is contained in some $U_\ell$, and define $n(\gamma) = n(\gamma_1)  n(\gamma_2) \ldots n(\gamma_k)$.  

Let $x$ and $y$ be points in $X$, let $\alpha$ and $\beta$ be two paths from $x$ to $y$, and let $h$ be a homotopy from $\alpha$ to $\beta$.  To show that the assignments $x \mapsto n(x)$ and $\gamma \mapsto n(\gamma)$ form a 1-morphism $F \to G$, we have to show that $n(\alpha)$ and $n(\beta)$ make the following square commute:
$$\xymatrix{
{n(y) \circ F(\alpha)} \ar[r]^{n(y)F(h)} 
\ar[d]_{n(\alpha)} & {n(y) \circ F(\beta)} 
\ar[d]^{n(\beta)} \\
{G(\alpha) \circ n(x)}
\ar[r]_{G(h)n(x)} & {G(\beta) \circ n(x)}
}
$$
By proposition \ref{elhotfactor} we may assume $h$ is elementary.  An elementary homotopy may be factored as $h = 1_\gamma \cdot h' \cdot 1_\delta$ where the image of $h'$ lies in $U_i$, so we may as well assume the image of $h$ lies in $U_i$.  In that case the diagram above is equivalent to 
$$\xymatrix{
{n_i(y) \circ F\vert_i(\alpha)} \ar[rr]^{n_i(y) F\vert_i(h)} \ar[d]_{n_i(\alpha)} & & {n_i(y) \circ F\vert_i(\beta)}
\ar[d]^{n_i(\beta)} \\
{G\vert_i(\alpha) \circ n_i(x)} \ar[rr]_{G\vert_i(h) n_i(x)} & & {G\vert_i(\beta) \circ n_i(x)}
}
$$
which commutes by assumption.  (Here $F\vert_i$ and $G\vert_i$ denote the restrictions of $F$ and $G$ to $\pi_{\leq 2}(U_i)$.)  The natural isomorphisms $n_i(x) \to n(x)$ assemble to an isomorphism between $\res(n)$ and $\{n_i\}$, completing the proof.

\end{proof}

\begin{proposition}
\label{resesssurj}
Let $X$ be a compactly generated Hausdorff space, and let $\{U_i\}$ be a $d$-cover of $X$.  The natural 2-functor $\res:\twomon(X) \to \twomon\big(\{U_i\}\big)$ is essentially surjective.
\end{proposition}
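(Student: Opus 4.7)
The plan is to construct directly, out of 2-monodromy data $\{F_i, \rho_{ij}, \tau_{ijk}\}$ on the cover $\{U_i\}_{i \in I}$, a global 2-monodromy functor $F \in \twomon(X)$ together with an equivalence $\res(F) \simeq \{F_i\}$. The two main technical ingredients will be proposition \ref{elhotfactor} and proposition \ref{3delhotfactor}, which let us reduce arbitrary homotopies and arbitrary homotopies-between-homotopies to elementary ones, together with the tetrahedron coherence condition (3), which makes iterated compositions of transition equivalences independent of the chosen path through the poset $I$.

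First I would define $F$ on objects by $F(x) = \varinjlim_{i \, \mid \, U_i \ni x} F_i(x)$, exactly as in the proof of proposition \ref{resessfull}: the diagram is filtered (since $I$ is closed under intersections and $\{U_i\}$ is a $d$-cover) and all its transition maps are equivalences by condition (1), so each $F_i(x) \to F(x)$ is an equivalence. Next, for a Moore path $\gamma$ from $x$ to $y$ whose image lies inside a single $U_i$, I would define $F(\gamma):F(x) \to F(y)$ as the composition $F(x) \stackrel{\sim}{\leftarrow} F_i(x) \xrightarrow{F_i(\gamma)} F_i(y) \stackrel{\sim}{\to} F(y)$, and check using the 2-isomorphisms $\tau_{ijk}$ that this is independent of the chosen $U_i$. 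For a general path I would pick a factorization $\gamma = \gamma_N \cdots \gamma_1$ with each $\gamma_k$ contained in some $U_{i(k)}$ and set $F(\gamma) = F(\gamma_N) \circ \cdots \circ F(\gamma_1)$; the usual common-refinement argument, based on the fact that two such factorizations admit a common refinement lying inside intersections of the $U_i$'s and again invoking condition (3), shows that $F(\gamma)$ is independent of the factorization.

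For a 2-morphism $h:\alpha \to \beta$ in $\pi_{\leq 2}(X)$ I would first define $F(h)$ for $i$-elementary $h$: writing $h = 1_{\gamma_1} \cdot h' \cdot 1_{\gamma_0}$ with the image of $h'$ contained in $U_i$, set $F(h) = 1_{F(\gamma_1)} \cdot F_i(h') \cdot 1_{F(\gamma_0)}$, transported via the transition equivalences. Proposition \ref{elhotfactor} lets me define $F(h)$ for arbitrary $h$ by factoring into elementary pieces, and proposition \ref{3delhotfactor} gives the well-definedness: any two such factorizations are linked by a chain of elementary 3-dimensional homotopies, and each such elementary 3-homotopy is supported inside a single $U_i$, where the claim reduces to the 2-functoriality of $F_i$. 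The verification that $F$ so defined is a 2-functor --- strict associativity of path composition, the interchange/horizontal composition law, and the identity axioms --- is local on $X$ at each step and thus reduces to the corresponding axioms for the $F_i$; this bookkeeping is the main obstacle, but each piece is routine once the reductions above are in place.

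Finally, to produce the equivalence $\res(F) \stackrel{\sim}{\to} \{F_i\}$ in $\twomon(\{U_i\})$, I would take the 1-morphisms $F|_{U_i} \to F_i$ given on objects by the canonical equivalences $F(x) \stackrel{\sim}{\leftarrow} F_i(x)$, and on paths $\gamma \subset U_i$ by the defining formula for $F(\gamma)$, which already factors through $F_i(\gamma)$. The required coherence 2-isomorphisms for each pair $U_j \subset U_i$ come from $\rho_{ij}$, and the commutativity of the resulting tetrahedron is exactly condition (3) imposed on the original data. By proposition \ref{res2full} and proposition \ref{resessfull} applied to this 1-morphism, we recover $\{F_i\}$ up to equivalence, so $\res$ is essentially surjective and hence an equivalence of 2-categories.
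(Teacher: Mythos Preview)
Your proposal is correct and follows essentially the same route as the paper: define $F(x)$ as the filtered 2-colimit of the $F_i(x)$, define $F(\gamma)$ chart-by-chart on a factorization of the path, define $F(h)$ on elementary homotopies and extend via propositions \ref{elhotfactor} and \ref{3delhotfactor}, and then observe that the canonical equivalences $F_i(x)\simeq F(x)$ assemble to an equivalence between $\{F_i\}$ and $\res(F)$. The only quibble is your final sentence: you do not need propositions \ref{res2full} and \ref{resessfull} here --- the equivalence $\res(F)\simeq\{F_i\}$ follows directly from the fact that each $F_i(x)\to F(x)$ is an equivalence (a 1-morphism in $\twomon(\{U_i\})$ is an equivalence iff it is so pointwise), and that alone yields essential surjectivity.
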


\begin{proof}
Let $\{F_i\}$ be an object of $\twomon\big( \{U_i\}_{i \in I}\big)$.  For each point $x \in X$ let $F(x)$ denote the category
$$F(x) := \wtwocolim{i \in I \, \mid \, x \in U_i}{F_i(x)}$$
Since $I$ is filtered and each of the maps $F_i(x) \to F_j(x)$ is an equivalence, the natural map $F_i(x) \to F(x)$ is an equivalence of categories for each $i$.  

We wish to extend the assignment $x \mapsto F(x)$ to a 2-functor $\pi_{\leq 2}(X) \to \Cat$.  Let $\gamma$ be a path between points $x$ and $y$ in $X$.  If the image of $\gamma$ is contained in some $U_i$ then we may form $F(\gamma):F(x) \to F(y)$ by taking the direct limit over $i$ of the functors $c_{i,\gamma}:F(x) \to F(y)$, where $c_{i,\gamma}$ is the composition 
$$F(x) \stackrel{\sim}{\leftarrow} F_i(x) \stackrel{F_i(\gamma)}{\to} F_i(y) \stackrel{\sim}{\to} F(y)$$
Whenever $U_j \subset U_i$ the natural transformation $c_{i,\gamma} \to c_{j,\gamma}$ induced by the commutative square
$$\xymatrix{
F_i(x) \ar[r]^{F_i(\gamma)} \ar[d] & F_i(y) \ar[d] \\
F_j(x) \ar[r]_{F_j(\gamma)} & F_j(y)}
$$
is an isomorphism, and the limit is filtered, so each of the maps $c_{i,\gamma} \to F(\gamma)$ is an isomorphism.

Now for each path $\gamma$ not necessarily contained in one chart,  pick a factorization $\gamma = \gamma_1 \cdot \ldots \cdot \gamma_N$ with the property that for each $\ell$ there is a $k$ such that the image of $\gamma_\ell$ lies in $U_k$.  If $x_\ell$ and $x_{\ell+1}$ denote the endpoints of $\gamma_\ell$, let $F(\gamma):F(x) \to F(y)$ be the functor given by the composition 
$$F(x) = F(x_1) \stackrel{F(\gamma_1)}{\to} F(x_2) \stackrel{F(\gamma_2)}{\to} \ldots \stackrel{F(\gamma_N)}{\to} F(x_{N+1}) = F(y)$$

Suppose $h$ is a homotopy between paths $\alpha$ and $\beta$ with the property that the images of $\alpha$, $\beta$, and $h$ lie in a single chart $U_i$.  Then define a natural transformation $F(h):F(\alpha) \to F(\beta)$ to be the composition
$$F(\alpha) \stackrel{\sim}{\leftarrow} F_i(\alpha)  \stackrel{F_i(h)}{\to} F_i(\beta) \stackrel{\sim}{\to} F(\beta)$$
If $h = 1_{\gamma_1} \cdot h' \cdot 1_{\gamma_0}$ is an elementary homotopy, such that the image of $h'$ lies in some $U_i$, define $F(h) = 1_{F(\gamma_1)} \cdot F(h') \cdot 1_{F(\gamma_0)}$.  If $g$ is an arbitrary homotopy, let $g_n \circ g_{n-1} \cdots \circ g_1$ be a composition of elementary homotopies that is homotopic to $g$, and define $F(g) = F(g_n) \circ \cdots \circ F(g_1)$.  The $g_i$ exist by proposition \ref{elhotfactor}, and the formula for $F(g)$ is independent of the factorization by proposition \ref{3delhotfactor}.

We may extend $F$ to all elementary homotopies, since any elementary homotopy can be written as $1_{\gamma_1} \cdot h' \cdot 1_{\gamma_0}$ where the image of $h'$ lies in some $U_i$; it follows that if $h$ and $g$ are elementary homotopies that are themselves homotopic by an elementary homotopy, then $F(h) = F(g)$.  
By propositions \ref{elhotfactor} and \ref{3delhotfactor} this is well-defined.

The maps $F_i(x) \to F(x)$ assemble to a map $\{F_i\} \to \res(F)$ in $\twomon\big( \{U_i\}\big)$.  As each $F_i(x) \to F(x)$ is an equivalence by construction, this shows that $\{F_i\}$ is equivalent to $\res(F)$, so that $\res$ is essentially surjective.
\end{proof}

\section{Stratified 2-truncations and 2-monodromy}

In this section we develop an abstract version of our main theorem.  We introduce the notion of a \emph{stratified 2-truncation}.  A stratified 2-truncation $\orpi_{\leq 2}$ is a strict functorial assignment from topologically stratified spaces to 2-categories satisfying a few axioms.  We show that these axioms guarantee that the 2-category of 2-functors from $\orpi_{\leq 2}(X,S)$ to $\Cat$ is equivalent to the 2-category of $S$-constructible stacks on $X$.

Let $\Strat$ denote the category of topologically stratified spaces and stratum-preserving maps between them.  We will consider functors from $\Strat$ to the category (that is, 1-category) of 2-categories and strict 2-functors; we will denote the latter category by $\twocat$.  Thus, such a functor $\orpi_{\leq 2}$ consists of
\begin{enumerate}
\item an assignment $(X,S) \mapsto \orpi_{\leq 2}(X,S)$ that takes a topologically stratified space to a 2-category.

\item an assignment $f \mapsto \orpi_{\leq 2}(f)$ that takes a stratum-preserving map $f:X \to Y$ to a strict 2-functor $\orpi_{\leq 2}(f):\orpi_{\leq 2}(X) \to \orpi_{\leq 2}(Y)$.
\end{enumerate}
such that for any pair of composable stratum-preserving maps $X \stackrel{f}{\to} Y \stackrel{g}{\to} Z$, we have $\orpi_{\leq 2}(g \circ f) = \orpi_{\leq 2}(g) \circ \orpi_{\leq 2}(f)$.

A functor $\orpi_{\leq 2}$ is called a \emph{stratified 2-truncation} if it satisfies the four axioms below. Two of these axioms require some more discussion, but we will state them here first somewhat imprecisely:

\begin{definition}
\label{2truncaxioms}
Let $\orpi_{\leq 2}$ be a functor $\Strat \to \twocat$.  We will say that $\orpi_{\leq 2}$ is a \emph{stratified 2-truncation} if it satisfies the following axioms:
\begin{enumerate}
\item[(N)] Normalization.  If $\emptyset$ denotes the empty topologically stratified space, then $\orpi_{\leq 2}(\emptyset)$ is the empty 2-category.
\item[(H)] Homotopy invariance.  For each topologically stratified space $(X,S)$, the 2-functor
$\orpi_{\leq 2}(f):\orpi\big( (0,1) \times X,S' \big) \to \orpi(X,S)$ induced by the projection map $f:(0,1) \times X \to X$ is an equivalence of 2-categories.  Here $S'$ denotes the stratification on $(0,1) \times X$ induced by $S$.
\item[(C)] Cones.  Roughly, for each compact topologically stratified space $L$, $\orpi_{\leq 2}(CL)$ may be identified with the cone on the 2-category $\orpi_{\leq 2}(L)$.  See section \ref{axiomc} below.

\item[(vK)] van Kampen.  Roughly, for every topologically stratified space $X$ and every $d$-cover $\{U_i\}_{i \in I}$ of $X$, the 2-category $\orpi_{\leq 2}(X)$ is naturally equivalent to the direct limit (or ``3-limit'') over $i \in I$ of the 2-categories $\orpi_{\leq 2}(U_i)$.  See section \ref{axiomvk} below.
\end{enumerate}

\end{definition}

\subsection{Cones on 2-categories and axiom (C)}
\label{axiomc}

If $\bC$ is a 2-category, let 
$\big(* \downarrow \bC\big)$ denote the 
2-category whose objects are the objects of $\bC$ together with one new object 
$*$, and where 
the hom categories $\Hom_{*\downarrow \bC}(x,y)$ are as follows:
\begin{enumerate}
\item $\Hom(x, y) = \Hom_{\bC}(x, y)$ if both $x$ and $y$ are in $\bC$. 
\item $\Hom(x, y)$ is the trivial category if $x = *$. 
\item $\Hom(x, y)$ is the empty category if $y = *$ and $x \neq *$.
\end{enumerate}

\begin{definition}
Let $\orpi_{\leq 2}:\Strat \to \twocat$ be a functor satisfying axioms (N) and (H) above.  For each compact topologically stratified space $L$, let us endow $(0,1) \times L$ and $CL$ with the naturally induced topological stratification.  Let us say that
$\orpi_{\leq 2}$ \emph{satisfies axiom (C)} if for each compact topologically stratified space $L$ there is
an equivalence of 2-categories $\orpi_{\leq 2}(CL) \stackrel{\sim}{\to} \big(* \downarrow \orpi_{\leq 2}(L)\big)$ such that
\begin{enumerate}
\item the following square commutes up to equivalence of 2-functors:
$$
\xymatrix{
\orpi_{\leq 2}\big( (0,1) \times L \big) \ar[r] \ar[d] & \orpi_{\leq 2}(CL) \ar[d] \\
\orpi_{\leq 2}(L) \ar[r] & \big({*} \downarrow \orpi_{\leq 2}(L)\big)
}
$$

\item The composition $\orpi_{\leq 2}\big(\{\text{cone point}\}\big) \to \orpi_{\leq 2}(CL) \to \big(* \downarrow \orpi_{\leq 2}(L)\big)$ is equivalent to the natural inclusion 
$\orpi_{\leq 2}\big(\{\text{cone point}\}\big) \cong * \to \big(* \downarrow \orpi_{\leq 2}(L)\big)$
\end{enumerate}
\end{definition}

\subsection{Exit 2-monodromy functors and axiom (vK)}
\label{axiomvk}

Morally, the van Kampen axiom states that $\orpi_{\leq 2}$ preserves direct limits (at least in a diagram of open immersions).  We find it inconvenient to define a direct 3-limit of 2-categories directly; we will instead formulate it in terms of category-valued 2-functors on the 2-categores $\orpi_{\leq 2}(X)$, as in section \ref{vkfun2gpd}.

In this section, fix a functor $\orpi_{\leq 2}:\Strat \to \twocat$.

\begin{definition}
Let $(X,S)$ be a topologically stratified space.  An \emph{exit 2-monodromy functor} on $(X,S)$ with respect to $\orpi_{\leq 2}$ is a 2-functor 
$\orpi_{\leq 2}(X,S) \to \Cat$.  Write $\twoexit(X,S) = \twoexit(X,S;\orpi_{\leq 2})$ for the 2-category of exit 2-monodromy functors on $(X,S)$ with respect to $\orpi_{\leq 2}$.
\end{definition}

\begin{definition}
Let $(X,S)$ be a topologically stratified space.  Let $\{U_i\}_{i \in I}$ be a $d$-cover of $X$.  Endow each $U_i$ with the topological stratification $S_i$ inherited from $S$.  An \emph{exit 2-monodromy functor} on $\{U_i\}_{i \in I}$, with respect to $\orpi_{\leq 2}$ consists of the following data:
\begin{enumerate}
\item[(0)] For each $i \in I $ a 2-monodromy functor $F_i \in \twoexit(U_i, \orpi_{\leq 2})$.
\item[(1)] For each $i,j \in I$ with $U_j \subset U_i$ an equivalence of exit 2-monodromy functors $F_i \vert_{U_j} \stackrel{\sim}{\to} F_j$.
\item[(2)] For each $i,j,k \in I$ with $U_k \subset U_j \subset U_i$ an isomorphism between the composite equivalence $F_i \vert_{U_j} \vert_{U_k} \stackrel{\sim}{\to} F_j \vert_{U_k} \stackrel{\sim}{\to} F_k$ and the equivalence $F_i \vert_{U_k} \stackrel{\sim}{\to} F_k$
\end{enumerate}
such that the following condition holds:
\begin{enumerate}
\item[(3)] For each $i,j,k,\ell \in I$ with $U_\ell \subset U_k \subset U_j \subset U_i$, the tetrahedron commutes:
$$\tetrah{F_i\vert_{U_j}\vert_{U_k}\vert_{U_\ell}}{F_j\vert_{U_k}\vert_{U_\ell}}{F_k\vert_{U_\ell}}{F_\ell}$$
\end{enumerate}
Write $\twoexit\big( \{U_i\}_{i \in I}, \orpi_{\leq 2}\big)$ for the 2-category of exit 2-monodromy functors on $\{U_i\}$.
\end{definition}

Let $X$ be a topologically stratified space, and let $\{U_i\}_{i \in I}$ be a $d$-cover of $X$.  Denote by $\res$ the natural strict 2-functor $\twoexit(X) \to \twoexit\big( \{U_i\}_{i \in I}\big)$.

\begin{definition}
Let $\orpi_{\leq 2}$ be a 2-functor $\Strat \to \twocat$.  We say that $\orpi_{\leq 2}(X)$ \emph{satisfies axiom (vK)} if $\res:\twoexit(X) \to \twoexit\big( \{U_i\}_{i \in I}\big)$ is an equivalence of 2-categories for every topologically stratified space $X$ and every $d$-cover $\{U_i\}_{i \in I}$ of $X$.
\end{definition}

\subsection{The exit 2-monodromy theorem}

In this section fix a stratified 2-truncation $\orpi_{\leq 2}$.

\begin{definition}
Let $(X,S)$ be a topologically stratified space.  For each open set $U \subset X$, let $S_U$ denote the induced stratification of $U$ and let $j_U$ denote the inclusion map $U \hookrightarrow X$.  
Let $\sfN:\twoexit(X,S) \to \Prest(X)$ denote the 2-functor which assigns to an exit 2-monodromy functor $F: \orpi_{\leq 2}(X,S) \to \Cat$ the prestack 
$$\sfN F: U \mapsto \wtwolim{\orpi_{\leq 2}(U,S_U)}{F \circ \orpi_{\leq 2}(j_U)} $$
\end{definition}

We wish to prove that $\sfN$ is an equivalence of $\twoexit(X,S)$ onto the 2-category $\St_S(X)$.

We need a preliminary result about constructible stacks on cones.

\begin{definition}
Let $(L,S)$ be a topologically stratified space.
Let $\big(\Cat \downarrow \St_S(L)\big)$ denote the 2-category whose objects are
triples $(\bC,\scrC,\phi)$, where
\begin{enumerate}
\item $\bC$ is a 1-category.
\item $\scrC$ is a constructible stack on $L$
\item $\phi$ is a 1-morphism $\bC_L \to \scrC$, where $\bC_L$ denotes the constant stack on $L$.
\end{enumerate}
\end{definition}

If $(L,S)$ is a compact topologically stratified space, let $S'$ denote the induced stratification on $(0,1) \times L$ and $S''$ the induced stratification on $CL$.  There is a 2-functor
$$\St_{S''}(CL) \to \left(\Cat \downarrow \St_{S'}\left( \left(0,1\right) \times L \right) \right) \cong \left(\Cat \downarrow \St_S\left(L\right)\right)$$
which associates to a stack $\scrC$ the triple $\big(\scrC(X), \scrC\vert_{(0,1) \times L}, \phi\big)$, where $\phi$ is the evident restriction map.

\begin{definition}
Let $(L,S)$ be a topologically stratified space.
Let $\big(\Cat \downarrow \twoexit(L)\big)$ denote the 2-category whose objects are
triples $(\bC, F, \phi)$ where 
\begin{enumerate}
\item $\bC$ is a 1-category.
\item $F$ is an exit 2-monodromy functor on $L$.
\item $\phi$ is a 1-morphism from the constant $\bC$-valued functor to $F$.
\end{enumerate}
\end{definition}

Note that the equivalence 
$$\orpi_{\leq 2}(CL) \stackrel{\sim}{\to} \big(* \downarrow \orpi_{\leq 2}(L)\big)$$
gives an equivalence 
$$\left(\Cat \downarrow \twoexit(L)\right) \stackrel{\sim}{\to} \twoexit(CL)$$.

\begin{proposition}
\label{twomonforcones}
Let $L$ be a compact topologically stratified space, and let $CL$ be the open cone on $L$.
The 2-functor $\St_{S''}(CL) \to \big(\Cat \downarrow \St_S(L)\big)$ is an equivalence of 2-categories.  Furthermore, the square
$$
\xymatrix{
\twoexit(CL,S'') \ar[r]^{\quad \sfN} \ar[d] & \St_{S''}(CL) \ar[d] \\
\big(\Cat \downarrow \twoexit(L,S)\big) \ar[r]_{ \quad \sfN} & \big(\Cat \downarrow \St_S(L)\big)
}
$$
commutes up to an equivalence of 2-functors.  
\end{proposition}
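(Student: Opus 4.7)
The plan is to establish both assertions by constructing an explicit quasi-inverse to the forgetful 2-functor in the first claim, and by tracing universal properties through the axioms in the second.

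For the equivalence, write $\Phi\colon \St_{S''}(CL)\to(\Cat\downarrow\St_S(L))$ for the forgetful 2-functor. I would build a quasi-inverse $\Psi$ by stack gluing. Given $(\bC,\scrC,\phi)$ with $\phi\colon\bC_L\to\scrC$, let $\pi\colon(0,1)\times L\to L$ denote the projection; by Theorem~\ref{lem-hotinvconstr} the pullback $\pi^*\scrC$ is $S'$-constructible on $(0,1)\times L$. Define $\Psi(\bC,\scrC,\phi)$ to be the stack on $CL$ whose value on a cone-free open $U\subset(0,1)\times L$ is $\pi^*\scrC(U)$, and whose value on an open $U$ containing the cone point is the 2-fibre product of $\bC\to\pi^*\scrC(U\setminus\{*\})$ (sending $c$ to the image under $\pi^*\phi$ of the constant section with fibre $c$) with $\mathrm{id}\colon\pi^*\scrC(U\setminus\{*\})\to\pi^*\scrC(U\setminus\{*\})$. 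The descent property reduces to the stack property of $\pi^*\scrC$ on cone-free opens together with the observation that the 2-fibre product at a cone-containing open is determined by the cone-point datum $\bC$ and the punctured-neighborhood datum $\pi^*\scrC(U\setminus\{*\})$, both of which respect gluing. Theorem~\ref{cscrit} then verifies $S''$-constructibility of $\Psi(\bC,\scrC,\phi)$: on each conical neighborhood of the cone point the fibre product collapses to $\bC$, and on each cone-free conical open we recover $\pi^*\scrC$. The identity $\Phi\circ\Psi\simeq\mathrm{id}$ is tautological; for $\Psi\circ\Phi(\scrD)\simeq\scrD$, Theorem~\ref{cscrit} gives $\scrD(CL)\simeq\scrD_{\mathrm{cone}}$, so $\scrD$ is determined up to equivalence by the triple $(\scrD(CL),\scrD|_{(0,1)\times L},\psi)$, and this triple reconstructs $\scrD$ precisely through the $\Psi$ recipe. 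Full faithfulness of $\Phi$ follows analogously by computing Hom-categories via Theorem~\ref{cscrit}.

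For the commutativity of the square, let $F\in\twoexit(CL)$. The left vertical, induced by axiom~(C), sends $F$ to $(F(*),F|_L,\phi_F)$, where $\phi_F$ is the 1-morphism from the constant $F(*)$-valued functor to $F|_L$ encoding the unique 1-morphisms $*\to x$ in $(*\downarrow\orpi_{\leq 2}(L))$. Applying $\sfN$ then the right vertical produces the triple $(\sfN F(CL),\sfN F|_{(0,1)\times L},\psi_{\sfN F})$. Axiom~(C) makes the cone point an initial object of $\orpi_{\leq 2}(CL)$, so the defining 2-limit collapses to $\sfN F(CL)\simeq F(*)$. Axiom~(H) gives $\orpi_{\leq 2}(U)\simeq\orpi_{\leq 2}(\pi(U))$ for product-form opens $U\subset(0,1)\times L$, and by taking a $d$-cover by such opens one deduces $\sfN F|_{(0,1)\times L}\simeq\pi^*\sfN(F|_L)$, i.e.\ the image of $\sfN(F|_L)$ under homotopy invariance. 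Finally, the restriction morphism $\psi_{\sfN F}$ matches $\sfN(\phi_F)$ because both arise from the same universal 1-morphisms $*\to x$ in the cone 2-category, one interpreted as restriction-of-sections and the other as the image under $\sfN$ of a 2-natural transformation. Naturality in $F$ is automatic from functoriality of 2-limits and of the axiom-(C) equivalence.

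The main obstacle is Part~1: verifying that the 2-fibre product formula for $\Psi$ defines a genuine stack on $CL$ and correctly realizes the inverse of $\Phi$. This amounts to a careful descent calculation, which can be organized around the $d$-cover of $CL$ consisting of shrinking conical neighborhoods $C_\epsilon L$ together with open subsets of $(0,1)\times L$: on each piece the value of $\Psi$ is either $\pi^*\scrC$ or, for a conical neighborhood of the cone point, the category $\bC$. Once this is established, the commutativity of the square in Part~2 is essentially a bookkeeping exercise with axioms~(C) and~(H).
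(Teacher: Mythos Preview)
Your overall strategy for Part~1 matches the paper's: construct an explicit quasi-inverse $\Psi$ and check the two composites. The paper is far more terse---it simply writes down the formula
\[
U \longmapsto
\begin{cases}
\bC & \text{if } U \text{ is of the form } C_\epsilon L,\\
\scrC(U) & \text{if } U \text{ does not contain the cone point,}
\end{cases}
\]
and declares this the inverse. Your final paragraph arrives at exactly this description, so the content is the same.

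However, your intermediate formulation via a 2-fibre product is misstated. You take the 2-pullback of $\bC \to \pi^*\scrC(U\setminus\{*\})$ along the \emph{identity} on $\pi^*\scrC(U\setminus\{*\})$; but the 2-fibre product of any functor $f\colon X\to Z$ along $\mathrm{id}_Z$ is canonically equivalent to $X$ itself. So as written your formula assigns $\bC$ to \emph{every} open containing the cone point, not just the conical ones, and then the prestack is not a stack. The intended gluing should fiber $\bC$ (the value on some $C_\epsilon L\subset U$) and $\pi^*\scrC(U\setminus\{*\})$ over the common overlap $\pi^*\scrC(C_\epsilon L\setminus\{*\})$, via the restriction map rather than the identity. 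Once corrected, this collapses to the paper's formula on the basic opens $C_\epsilon L$, and the stackification takes care of the rest.

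For Part~2 the paper gives no argument at all. Your outline---using axiom~(C) to identify $\sfN F(CL)\simeq F(*)$ via initiality of the cone point, axiom~(H) to match $\sfN F$ on $(0,1)\times L$ with $\pi^*\sfN(F\vert_L)$, and then identifying the restriction morphism with $\sfN(\phi_F)$---is the right shape and supplies what the paper omits.
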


\begin{proof}
The 2-functor $\St_{S''}(CL) \to \big(\Cat \downarrow \St_{S'}\big((0,1) \times L\big)\big)$  is inverse to the 2-functor that takes an object $(\bC,\scrC,\phi)$ to the unique stack given by the formula
$$U \mapsto
\bigg\{
\begin{array}{ll}
\bC & \text{if } U \text{ is of the form } C_\epsilon L = [0,\epsilon) \times L / \{0\} \times L \\
\scrC(U) & \text{if $U$ does not contain the cone point}
\end{array}
$$
\end{proof}

\begin{theorem}
\label{orpimonodromy}
Let $(X,S)$ be a topologically stratified space, and let $F$ be an exit 2-monodromy functor on $(X,S)$.  The prestack $\sfN F$ is an $S$-constructible stack, and the 2-functor $\sfN:\twoexit(X,S) \to \St_S(X)$ is an equivalence of 2-categories.
\end{theorem}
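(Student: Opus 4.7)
The plan is to parallel the proof of Theorem \ref{twomonod}, substituting the axioms (N), (H), (C), (vK) of a stratified 2-truncation wherever the analogous properties of $\pi_{\leq 2}$ were used in the unstratified proof, and replacing Theorem \ref{lcscrit} by Theorem \ref{cscrit}. As in the unstratified case, I would first introduce the auxiliary hom-prestack $\sfN(G,F):U \mapsto \Hom_{\twoexit(U,S_U)}(G\vert_U, F\vert_U)$, of which $\sfN F$ is the special case $G = *$. Showing $\sfN(G,F)$ is a stack is then immediate: for a $d$-cover $\{U_i\}$ of an open $U \subset X$, unwinding definitions identifies $\twolim_i \sfN(G,F)(U_i)$ with the hom-category in $\twoexit(\{U_i\})$ between $\res(G\vert_U)$ and $\res(F\vert_U)$, and axiom (vK) identifies this with $\Hom(G\vert_U, F\vert_U) = \sfN(G,F)(U)$.

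Next I would verify $\sfN F$ is $S$-constructible by checking condition (3) of Theorem \ref{cscrit}. If $V \subset U$ are conical opens and $V \hookrightarrow U$ is a stratified homotopy equivalence, then an argument modeled on Corollary \ref{corcshotinv} (using axiom (H) together with the stratified-homotopies-preserve-everything principle) shows $\orpi_{\leq 2}(V) \to \orpi_{\leq 2}(U)$ is an equivalence of 2-categories, and hence the restriction map $\sfN F(U) \to \sfN F(V)$, a map between 2-limits of $F$ along equivalent 2-categories, is an equivalence.

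To prove $\sfN$ is essentially fully faithful, I would show that the stack morphism $\sfN(G,F) \to \shHom(\sfN G, \sfN F)$ is an equivalence. Both sides are $S$-constructible stacks, so by Theorem \ref{cscrit} it is enough to verify the statement on each conical open $U \cong CL \times \bbR^k$. Axiom (C) identifies $\twoexit(U)$ with $(\Cat \downarrow \twoexit(L))$, and Proposition \ref{twomonforcones} identifies $\St_{S''}(U)$ with $(\Cat \downarrow \St_S(L))$; under these identifications the claim at level $U$ reduces via induction on the depth of $S$ to the claim at level $L$, with base case $L = \emptyset$ (where $U$ is locally constant and everything is pinned down by Theorem \ref{twomonod}).

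For essential surjectivity, I would likewise induct on the depth $n$ of $(X,S)$. Given an $S$-constructible $\scrC$, cover $X$ by conical opens $U_i \cong CL_i \times \bbR^{k_i}$ forming a $d$-cover, where each $L_i$ has strictly smaller depth. By the inductive hypothesis on $L_i$ combined with Proposition \ref{twomonforcones} and axiom (C), each $\scrC\vert_{U_i}$ is equivalent to $\sfN F_i$ for some exit 2-monodromy functor $F_i$ on $U_i$; the equivalences on intersections, supplied by the already-established fully faithfulness on smaller opens, assemble into the coherence data of an object of $\twoexit(\{U_i\})$, which by axiom (vK) descends to an $F$ on $X$ with $\sfN F \simeq \scrC$. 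The base case $n=0$ is a disjoint union of points, where both sides consist of functions from the point set to $\Cat$.

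The hard part will be the essential surjectivity: executing the induction cleanly requires keeping track of the compatibility between Proposition \ref{twomonforcones}, axiom (C), and axiom (vK) on double and triple overlaps $U_i \cap U_j$ and $U_i \cap U_j \cap U_k$, where the gluing data for $\{F_i\}$ must be constructed from the gluing data of $\{\scrC\vert_{U_i}\}$ by inverting the essentially-fully-faithful 2-functor $\sfN$ on intersections (which are smaller stratified spaces). A secondary technical point is the upgrade from axiom (H), which is only stated for the special homotopy equivalence $(0,1) \times X \to X$, to invariance of $\orpi_{\leq 2}$ under arbitrary stratified homotopy equivalences between conical opens; this requires an argument modeled on Corollary \ref{corcshotinv}.
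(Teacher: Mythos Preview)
Your proposal is correct and follows essentially the same approach as the paper: introduce $\sfN(G,F)$, use (vK) to get the stack property, use (H) plus Theorem~\ref{cscrit} for constructibility, then prove $\sfN(G,F)\to\shHom(\sfN G,\sfN F)$ is an equivalence by induction on dimension via Proposition~\ref{twomonforcones} on conical neighborhoods, and finally obtain essential surjectivity by the same induction combined with (vK) on a $d$-cover by conical opens. Your identification of the two technical wrinkles (the coherence bookkeeping for the $\{F_i\}$ and the upgrade from (H) to arbitrary stratified homotopy equivalences) is apt; the paper handles both somewhat tersely.
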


\begin{proof}
We will follow the proof of theorem \ref{twomonod}.  
Let $G$ be another exit 2-monodromy functor on $X$, and once again let $\sfN(G,F)$ be the prestack $U \mapsto \Hom_{\twoexit(U)}(G\vert_U,F\vert_U)$.  As in the proof of theorem \ref{twomonod}, the van Kampen property of $\orpi_{\leq 2}$ (axiom (vK)) implies $\sfN(G,F)$ is a stack.  By the homotopy axiom (H), $\orpi_{\leq 2}(V) \to \orpi_{\leq 2}(U)$ is an equivalence of 2-categories whenever $V \subset U$ are open sets and $V \hookrightarrow U$ is a loose stratified homotopy equivalence.  It follows that the stacks $\sfN(G,F)$ are constructible by theorem \ref{cscrit}.  In particular $\sfN F$ is a constructible stack.

To see that $\sfN:\twoexit(X,S) \to \St_S(X)$ is essentially fully faithful, it suffices to show that
$\sfN(G,F) \to \shHom(\sfN G, \sfN F)$ is an equivalence of stacks, and we may check this on stalks.  We will induct on the dimension of $X$: it is clear that this morphism is an equivalence of stacks when $X$ is 0-dimensional, so suppose we have proven it an equivalence for $X$ of dimension $\leq d$.  Let $x \in X$ and let $U$ be a conical neighborhood of $x$.  The morphism $\sfN(G,F)_x \to \shHom(\sfN G, \sfN F)_x$ is equivalent to the morphism $\sfN(G,F)(U) \to \Hom(\sfN G \vert_U, \sfN F\vert_U)$, and by the stratified homotopy equivalence $U \simeq CL$ we may as well assume $U = CL$.  Let $T$ denote the stratification on $L$.  By proposition \ref{twomonforcones}, we have to show that the 2-functor $\big(\Cat \downarrow \twoexit(L,T)\big) \to \big(\Cat \downarrow \St_T(L)\big)$ is an equivalence, but this map is induced by $\sfN:\twoexit(L,T) \to \St_T(L)$ which is an equivalence by induction.

Finally let us show that $\sfN:\twoexit(X,S) \to \St_S(X)$ is essentially surjective.  Again let us induct on the dimension of $X$.  Let $\scrC$ be a constructible stack on $X$.  The restriction of $\scrC$ to a conical open set $U \cong \bbR^d \times CL$ is in the essential image of $\sfN:\twoexit(U,S_U) \to \St_{S_U}(U)$ by induction and proposition \ref{twomonforcones}.  We may find a $d$-cover $\{U_i\}_{i \in I}$ of $X$ generated by conical open sets, so that for each $i$ there is an $F_i \in \twoexit(U_i)$ such that $\scrC\vert_{U_i}$ is equivalent to $\sfN F_i$.  These $F_i$ assemble to an exit 2-monodromy functor on the $d$-cover, which by axiom (vK) comes from an exit 2-monodromy functor $F$ on $X$ with $\sfN F \cong \scrC$.  

This completes the proof.

\end{proof}

\section{Exit paths in a stratified space}
\label{sec7}

In this section we identify a particular stratified 2-truncation: the exit-path 2-category $EP_{\leq 2}$.  If $(X,S)$ is a topologically stratified space, then the objects of $EP_{\leq 2}(X,S)$ are the points of $X$, the morphisms are Moore paths with the ``exit property'' described in the introduction, and the 2-morphisms are homotopy classes of homotopies between exit paths, subject to a tameness condition.  The purpose of this section is to give a precise definition of the functor $EP_{\leq 2}$, and to check the axioms \ref{2truncaxioms}.

\begin{definition}
Let $X$ be a topologically stratified space.  A path $\gamma:[a,b] \to X$ is called an \emph{exit path} if for each $t_1, t_2 \in [a,b]$ with $t_1 \leq t_2$, the point $\gamma(t_1)$ is in the closure of the stratum containing $\gamma(t_2)$; equivalently, if the dimension of the stratum containing $\gamma(t_1)$ is not larger than the dimension of the stratum containing $\gamma(t_2)$.  For each pair of points $x,y \in X$ let $EP(x,y)$ denote the subspace of the space $P(x,y)$ of Moore paths (section \ref{fun2gpd}) with the exit property, starting at $x$ and ending at $y$.  
\end{definition}

\begin{remark}
If we wish to emphasize the space $X$ we will sometimes write $EP(X;x,y)$ for $EP(x,y)$.
\end{remark}

\subsection{Tame homotopies}

Let $(X,S)$ be a topologically stratified space.  Let us call a map $[0,1]^n \to X$ \emph{tame} with respect to $S$ if there is a continuous triangulation of $[0,1]^n$ such that the interior of every simplex maps into a stratum of $X$.  Note that the composition of a tame map $[0,1]^n \to X$ with a stratum-preserving map $(X,S) \to (Y,T)$ is again tame.

If $x$ and $y$ are two points of $X$, call a path $h:[0,1] \to EP(x,y)$ tame if the associated homotopy $[0,1] \times [0,1] \to X$ is tame with respect to $S$.  (See remark \ref{moorehot} for how to associate an ordinary ``square'' homotopy to a homotopy between Moore paths.)  Finally if $H:[0,1] \times [0,1]$ is a homotopy between paths $h$ and $g$ in $EP(x,y)$, we call $H$ tame if the associated map $[0,1] \times [0,1] \times [0,1] \to X$ is tame with respect to $S$.  

\begin{definition}
Let $(X,S)$ be a topologically stratified space, and let $x$ and $y$ be points of $X$.  Let $\tame(x,y)$ be the
groupoid whose objects are the points of $EP(x,y)$ and whose hom sets $\Hom_{\tame(x,y)}(\alpha,\beta)$ are tame homotopy classes of tame paths $h:[0,1] \to EP(x,y)$ starting at $\alpha$ and ending at $\beta$.
\end{definition}

The concatenation map $EP(y,z) \times EP(x,y) \to EP(x,z)$ takes a pair of tame homotopies $h:[0,1] \to EP(x,y)$ and $k:[0,1] \to EP(y,z)$ to a tame homotopy $k\cdot h:[0,1] \to EP(x,z)$, and this gives a well-defined functor
$\tame(y,z) \times \tame(x,y) \to \tame(x,z)$.  It follows we may define a 2-category:

\begin{definition}
Let $(X,S)$ be a topologically stratified space.  Let $EP_{\leq 2}(X,S)$ denote the 2-category whose objects are points of $X$ and whose hom categories $\Hom_{EP_{\leq 2}(X,S)}(x,y)$ are the groupoids $\tame(x,y)$.
\end{definition}

\begin{remark}
\label{remtameness}
The tameness condition is necessary for our proof of the van Kampen property of $EP_{\leq 2}(X,S)$ (which follows the proof given in section \ref{vksubsec}) -- it allows us to subdivide our homotopies indefinitely.  We can define a similar 2-category $EP^{\mathrm{naive}}_{\leq 2}(X,S)$ whose hom categories are the fundamental groupoids $\pi_{\leq 1}\big(EP(x,y)\big)$.  I \emph{believe} this 2-category to be naturally equivalent to $EP_{\leq 2}(X,S)$, and that $EP_{\leq 2}^{\mathrm{naive}}$ could be used in place of $EP_{\leq 2}$ in our main theorem.  To prove this one would have to show that the natural functor $\tame(x,y) \to \pi_{\leq 1}\big( EP(x,y)\big)$ is an equivalence of groupoids.  I have been unable to obtain such a ``tame approximation'' result.
\end{remark}

\subsection{The exit path 2-category is a stratified 2-truncation}

As a stratum-preserving map $f:(X,S) \to (Y,T)$ preserves tameness of maps $[0,1]^n \to X$, it induces a functor $f_*:\tame(x,y) \to \tame\big( f(x), f(y)\big)$ and a strict 2-functor $f_*:EP_{\leq 2}(X,S) \to EP_{\leq 2}(Y,T)$.  Thus, $EP_{\leq 2}$ is a functor $\Strat \to \twocat$.  The remainder of this section is devoted to showing that $EP_{\leq 2}$ satisfies the axioms \ref{2truncaxioms} for a stratified 2-truncation.

\begin{theorem}
\label{epnh}
The 2-functor $EP_{\leq 2}:\Strat \to \twocat$ satisfies axioms (N) and (H) of \ref{2truncaxioms}.
\end{theorem}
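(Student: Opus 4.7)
The plan is as follows. Axiom (N) is immediate: the empty stratified space has no points, so $EP_{\leq 2}(\emptyset)$ has no objects and is the empty 2-category.

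For axiom (H), I would exhibit an explicit pseudo-inverse. Write $p:(0,1)\times X \to X$ for the projection, fix any $t_0 \in (0,1)$, and let $s:X \to (0,1)\times X$ be the section $s(x) = (t_0,x)$, which is stratum-preserving because the stratum containing $(t_0,x)$ is $(0,1)\times C$ when $C$ is the stratum of $x$. Since $p \circ s = \mathrm{id}_X$ on the nose, strict functoriality of $EP_{\leq 2}$ gives $EP_{\leq 2}(p) \circ EP_{\leq 2}(s) = \mathrm{id}$; in particular $EP_{\leq 2}(p)$ is strictly surjective on objects. What remains is to show, for every pair of objects $(t_1,x_1),(t_2,x_2)$, that the induced hom-functor
$$ p_*:\tame\bigl((t_1,x_1),(t_2,x_2)\bigr) \longrightarrow \tame(x_1,x_2) $$
is an equivalence of groupoids.

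The structural fact that makes this check essentially formal is that every stratum of $\bigl((0,1)\times X, S'\bigr)$ has the form $(0,1)\times C$ for a stratum $C$ of $(X,S)$, of dimension $\dim C + 1$. Consequently a path $\tilde\gamma(s) = (\ell(s),\gamma(s))$ in $(0,1)\times X$ is an exit path if and only if $\gamma = p\circ\tilde\gamma$ is an exit path in $X$, and a triangulation witnessing tameness of $\gamma$ also witnesses tameness of $\tilde\gamma$, provided $\ell$ is chosen affine on each simplex. With this in hand, essential surjectivity of $p_*$ on objects reduces to lifting $\gamma \in EP(x_1,x_2)$ as $\tilde\gamma(s) = (\ell(s),\gamma(s))$ for $\ell$ any piecewise-affine path from $t_1$ to $t_2$ (in the degenerate case $\lambda = 0$ one lifts to a positive-length constant path and appeals to the Moore reparametrization equivalence of remark \ref{moorehot}). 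Fullness on morphisms reduces to lifting a tame homotopy $h:[0,1]\times[0,\lambda]\to X$ between $p\circ\tilde\alpha$ and $p\circ\tilde\beta$ as $\tilde h(r,s) = (H(r,s), h(r,s))$, where $H:[0,1]\times[0,\lambda]\to(0,1)$ is any continuous filler of the prescribed boundary data (matching the $(0,1)$-coordinates of $\tilde\alpha,\tilde\beta$ on $r=0,1$ and constant at $t_1,t_2$ on $s=0,\lambda$); such an $H$ exists because $(0,1)$ is convex, and a simplex-wise affine choice preserves tameness. Faithfulness on morphisms is proved identically one dimension higher, by filling in a $(0,1)$-component of a tame 3-dimensional homotopy on $[0,1]^3$ between two tame lifts.

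The main obstacle, such as it is, is purely bookkeeping: tracking Moore-path lengths and boundary data, and verifying that the naive convex-interpolation fillers are compatible with tameness. Since the $(0,1)$-factor contributes no stratification beyond its single ambient stratum, tameness of any lift reduces entirely to tameness of its $X$-projection, and the interpolating maps into $(0,1)$ can always be constructed simplex by simplex by convexity. No deeper ingredient is required.
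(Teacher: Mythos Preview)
Your proof is correct, and the underlying idea is the same as the paper's---the $(0,1)$-factor contributes nothing homotopically---but the packaging differs. The paper observes directly that
\[
\tame\bigl((s,x),(t,y)\bigr)\;\simeq\;\tame(s,t)\times\tame(x,y),
\]
since a path (or tame homotopy, or tame 3-homotopy) into $(0,1)\times X$ is exactly a pair of such into $(0,1)$ and into $X$, and the exit and tameness conditions depend only on the $X$-component. Then $\tame(s,t)$ coincides with $\pi_{\leq 1}\bigl(EP(s,t)\bigr)$ because $(0,1)$ has a single stratum, and $EP(s,t)$ is contractible, so this factor is trivial and the projection to $\tame(x,y)$ is an equivalence.

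Your argument unpacks this product decomposition into explicit lifts: you verify essential surjectivity, fullness, and faithfulness of $p_*$ by filling in the $(0,1)$-component of paths, homotopies, and 3-homotopies using convexity of $(0,1)$. This is correct and makes the existence of lifts completely concrete, at the cost of some boundary-data bookkeeping (Moore lengths, matching endpoints on faces of cubes) that the product formulation sidesteps entirely. The paper's route is shorter and generalizes immediately to any product with a trivially stratified contractible factor; yours has the virtue of being self-contained and not relying on the reader supplying the product identification.
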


\begin{proof}
Clearly $EP_{\leq 2}(\emptyset)$ is empty, so $EP_{\leq 2}$ satisfies axiom (N).

Let us now verify axiom (H).  Let $(X,S)$ be a topologically stratified space, and let $\pi:(0,1) \times X \to X$ denote the projection map.  
The 2-functor $\pi_*:EP_{\leq 2}\big( (0,1) \times X \big) \to EP_{\leq 2}(X)$ is clearly essentially surjective.  Let $(s,x)$ and $(t,y)$ be two points in $(0,1) \times X$.  To show that $\pi_*$ is essentially fully faithful we have to show that 
$tame\big( (s,x),(t,y) \big) \to \tame(x,y)$ is an equivalence of groupoids.  In fact this map is equivalent to the projection $\tame(s,t) \times \tame(x,y) \to \tame(x,y)$, and the groupoid $\tame(s,t)$ coincides with the fundamental groupoid $\pi_{\leq 1}\big(EP(s,t)\big)$ as $(0,1)$ has a single stratum.  Since $EP(s,t)$ is contractible, this groupoid is equivalent to the trivial groupoid, so the projection $\tame(s,t) \times \tame(x,y) \to \tame(x,y)$ is an equivalence.

\end{proof}

\begin{theorem}
\label{epc}
The 2-functor $EP_{\leq 2}:\Strat \to \twocat$ satisfies axiom (C) of \ref{2truncaxioms}
\end{theorem}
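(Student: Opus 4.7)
The plan is to construct a strict 2-functor $\Phi\colon EP_{\leq 2}(CL) \to \big(*\downarrow EP_{\leq 2}(L)\big)$ using the projection $\pi\colon(0,1)\times L\to L$ on points away from the cone vertex $v$, and to verify both that $\Phi$ satisfies conditions (1) and (2) of axiom (C) and that $\Phi$ is an equivalence of 2-categories. On objects, $\Phi$ sends $v$ to $*$ and $(t,\ell)\in(0,1)\times L$ to $\ell$. The key geometric observation is that because $\{v\}$ is the unique $0$-dimensional stratum of $CL$ while all other strata have dimension $\geq 1$, an exit path in $CL$ can visit $v$ only on an initial closed subinterval (once the path has entered a higher-dimensional stratum, the exit property prevents it from returning to $\{v\}$). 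Consequently, an exit path between two points of $(0,1)\times L$ stays entirely in $(0,1)\times L$, where $\pi$ carries it to an exit path in $L$. This lets us define $\Phi$ on 1- and 2-morphisms by $\pi$-composition when the endpoints avoid $v$, and by the unique morphism/2-morphism into the terminal hom-category when the source is $v$. The commutativity of the square in (1) and the cone-point compatibility in (2) are immediate from the definition, and essential surjectivity is obvious since $\Phi\big(\tfrac{1}{2},\ell\big)=\ell$.

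For essentially fully faithful we must check that $\Phi$ induces an equivalence on each hom category $\tame(CL;x,y)\to\Hom(\Phi(x),\Phi(y))$. When $x,y\in(0,1)\times L$, the observation above gives $\tame(CL;x,y)=\tame((0,1)\times L;x,y)$, which is equivalent to $\tame(L;\pi(x),\pi(y))$ by the homotopy invariance already established in Theorem \ref{epnh}. When $x=y=v$, exit paths from $v$ to $v$ must remain at $v$ throughout (applying the initial-segment observation at the other endpoint), so they are all constant Moore paths at $v$, all tame-homotopic, and $\tame(CL;v,v)$ is the terminal groupoid. When $x\neq v$ and $y=v$, both sides are empty. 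The critical remaining case is $x=v$, $y=(t,\ell)$: we must show $\tame(CL;v,y)$ is the terminal groupoid.

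For this case the main tool is the radial contraction $r_s\colon CL\to CL$ defined by $r_s(t',\ell')=(st',\ell')$ and $r_s(v)=v$, which is stratum-preserving for every $s\in[0,1]$. A canonical exit path from $v$ to $y$ is the radial path $\rho_y(u)=(u,\ell)$ for $u\in[0,t]$. Given any tame exit path $\alpha$ of Moore length $\lambda$ from $v$ to $y$, we build a tame homotopy from $\alpha$ to $\rho_y$ as the $s\in[0,1]$-family of Moore paths obtained by concatenating $r_s\circ\alpha$ (from $v$ to $(st,\ell)$) with the straight radial segment from $(st,\ell)$ to $y$: at $s=1$ this recovers $\alpha$ up to a length-zero tail, and at $s=0$ it is a constant-at-$v$ path concatenated with $\rho_y$, Moore-reparameterization-equivalent to $\rho_y$ alone. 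Uniqueness of tame homotopies between such paths follows from an entirely analogous construction in one dimension higher, applied to a pair of tame homotopies $H_0,H_1$ between exit paths.

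The main obstacle is the tameness bookkeeping: the triangulation of $[0,\lambda]$ witnessing tameness of $\alpha$ must extend to a triangulation of $[0,1]\times[0,\lambda]$ whose simplex interiors each map into a single stratum of $CL$, with particular care at the seam where $r_s\circ\alpha$ is concatenated with the radial extension. The preimage of $\{v\}$ under the constructed homotopy is $\{s=0\}\cup\big([0,1]\times\alpha^{-1}(v)\big)$, and $\alpha^{-1}(v)$ is a closed initial subinterval of $[0,\lambda]$ by the exit property together with tameness, so this preimage is a polygonal region that admits a triangulation compatible with the product triangulation coming from $\alpha$. The verification is a routine though tedious extension of the cell-by-cell factorization used in Proposition \ref{elhotfactor}, now one dimension higher for the 2-morphism case and two dimensions higher for the uniqueness of 2-morphisms.
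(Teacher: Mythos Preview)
Your approach is correct and the overall architecture (defining $\Phi$, reducing the hom-category analysis to the single case $\tame(CL;v,y)$, and contracting that groupoid to a point) matches the paper's. The difference lies in how the contraction of $\tame(CL;v,y)$ is carried out.

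The paper first passes to the full subgroupoid $\tame'$ of length-$1$ exit paths that touch $v$ only at time $0$, identifies its objects with a subspace $W\subset EP(v,y)$, and then uses the product decomposition $W\cong W_1\times W_2$ where $W_1$ is a space of paths in $(0,1)$ (the cone coordinate) and $W_2$ is a space of half-open exit paths $(0,1]\to L$. Each factor is contracted separately by an explicit formula: $W_1$ by straight-line interpolation to the linear path, and $W_2$ by the reparameterization $\mu_t(\beta)(s)=\beta(t+s-ts)$ pushing toward the endpoint. Because these contractions are given by linear interpolation and reparameterization respectively, tameness is preserved on the nose and both connectedness and simple-connectedness of $\tame'$ follow in one stroke.

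Your radial contraction $r_s$ on $CL$ followed by a compensating radial segment also gives a tameness-preserving deformation of the entire path space $EP(v,y)$ to a single point, so it yields the same conclusion. The trade-off is that your contraction moves the terminal point (hence the need for the appended segment), which forces you to track tameness across the concatenation seam and to argue separately, one dimension up, for uniqueness of $2$-morphisms. The paper's product decomposition avoids both of these by keeping the endpoint fixed from the outset and handling the cone and link directions independently. One small correction: the tameness bookkeeping you need is not really an ``extension of the cell-by-cell factorization used in Proposition~\ref{elhotfactor}''; that proposition is about subdividing a given homotopy into elementary pieces with respect to an open cover, whereas here you simply need to exhibit a triangulation of the homotopy domain whose open simplices land in single strata, which follows directly from the polygonal description of $H^{-1}(v)$ you already gave.
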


\begin{proof}
Let $(L,S)$ be a compact topologically stratified space.  Let $CL$ be the open cone on $L$, and let $* \in CL$ be the cone point.  We have to show that for each $x \in CL$, the groupoid $\tame(*,x)$ is equivalent to the trivial groupoid.  This is clear when $x$ is the cone point, so suppose $x  = (u,y) \in (0,1) \times L \subset CL$.  Let $\tame' \subset \tame(*,x)$ denote the full subgroupoid whose objects are the exit paths $\alpha$ of Moore length 1 (i.e. $\alpha:[0,1] \to CL$) with $\alpha(t) \neq *$ for $t>0$.  Every exit path $\gamma \in \tame(x,y)$ is clearly tamely homotopic to one in $\tame'$; it follows that $\tame'$ is equivalent $\tame(*,x)$.

Let $W \subset EP(*,x)$ be the subspace of exit paths $\alpha$ with Moore length 1 and with $\alpha(t) \neq *$ for $t >0$.  $W$ is homeomorphic to the space of paths $\beta:(0,1] \to (0,1) \times L$ with the property that $\beta$ is an exit path, that $\beta(1) = x$, and that for all $\epsilon>0$, there is a $\delta>0$ such that $\beta^{-1}\big( (0,\epsilon) \times L \big) \supset (0,\delta)$.  This space may be expressed as a product $W \cong W_1 \times W_2$, where
\begin{enumerate}
\item $W_1$ is the space of paths $\alpha:(0,1] \to (0,1)$ with $\alpha(1) = u$ and $\forall \epsilon \exists \delta$ such that $\alpha^{-1}(0,\epsilon) \supset (0,\delta)$
\item $W_2$ is the space of paths $\beta:(0,1] \to L$ with $\beta(1) = y$ and $\beta$ has the exit property.
\end{enumerate}
The first factor $W_1$ is contractible via $\kappa_t:W_1 \to W_1$, where $\kappa_t(\alpha)(s) = t \cdot s \cdot u + (1-t) \cdot \alpha(s)$.  The second factor is contractible via $\mu_t:W_2 \to W_2$ where $\mu_t(\beta)(s) = \beta(t+s - ts)$.  These contractions preserve tameness, and therefore they induce an equivalence between $\tame'$ and the trivial groupoid.
\end{proof}

Finally we have to prove that $EP_{\leq 2}$ satisfies the van Kampen axiom.  Let us first discuss elementary tame homotopies, analogous to the elementary homotopies used in the proof of the van Kampen theorem for $\pi_{\leq 2}$ in section \ref{vksubsec}.

\begin{definition}
Let $(X,S)$ be a topologically stratified space, and let $\{U_i\}_{i \in I}$ be a $d$-cover of $X$.  Let $x$ and $y$ be points of $X$, and let $\alpha$ and $\beta$ be exit paths from $x$ to $y$.  A homotopy $h:[0,1] \times [0,1] \to X$ between $\alpha$ and $\beta$ is \emph{$i$-elementary} if there is a subinterval $[a,b] \subset [0,1]$ such that $h(s,t)$ is independent of $s$ so long as $t \notin [a,b]$, and such that the image of $[0,1] \times [a,b] \subset [0,1] \times [0,1]$ under $h$ is contained in $U_i$.
\end{definition}

\begin{remark}
Elementary homotopies between exit paths may be pictured in the same way as ordinary homotopies, as in figure \ref{bead}.
\end{remark}

\begin{proposition}
\label{prop97}
Let $(X,S)$ be a topologically stratified space, and let $\{U_i\}_{i \in I}$ be a $d$-cover of $X$.  Let $\alpha$ and $\beta$ be exit paths from $x$ to $y$, and let $h:[0,1] \times [0,1] \to X$ be a homotopy from $\alpha$ to $\beta$.  Then there is a finite list $\alpha = \alpha_0$,$\alpha_1$,$\ldots$,$\alpha_n = \beta$ of exit paths from $x$ to $y$, and of homotopies $h_1:\alpha_0 \to \alpha_1$, $h_2:\alpha_1 \to \alpha_2$, $\ldots$, $h_n:\alpha_{n-1} \to \alpha_n$ such that $h$ is homotopic to $h_n \circ \ldots \circ h_1$, and such that each $h_i$ is elementary.
\end{proposition}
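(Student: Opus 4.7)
The plan is to adapt the proof of Proposition~\ref{elhotfactor} to the stratified setting. By the tameness of $h$, I would first choose a continuous triangulation $\Delta$ of $[0,1] \times [0,1]$ such that the interior of every simplex is mapped by $h$ into a single stratum of $X$, and every closed simplex is contained in $h^{-1}(U_i)$ for some $i \in I$; such $\Delta$ exist by compactness of $[0,1]^2$ together with sufficient subdivision. I would then induct on the number $n$ of 2-simplices of $\Delta$, with the base case $n=1$ immediate since $h$ itself then has image in a single chart.

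For the inductive step I would mimic the peeling of Proposition~\ref{elhotfactor}: pick a 2-simplex $\sigma$ with an edge on $\{0\}\times[0,1]$ and try to factor $h$, up to a 3-dimensional tame homotopy, as $g\circ k$, where $k$ is the elementary tame homotopy parameterizing $\sigma \cup (\{0\}\times[0,1])$ and $g$ parameterizes the closure of the complement, now triangulated with $n-1$ triangles. The mapping cylinder of the reparameterization homeomorphism between $[0,1]^2$ and its complement-minus-$\sigma$ supplies the 3-dimensional homotopy between $h$ and $g\circ k$, and the induction on $g$ produces the desired factorization of $h$ into elementary homotopies.

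The main obstacle, absent in the unstratified argument, is showing that the intermediate path $\alpha_1 := k(1,-)$, obtained from $\alpha$ by detouring around $\sigma$, remains an exit path. This is not automatic: by tameness the interior of $\sigma$ maps into a single stratum $T_\sigma$ whose closure contains the strata to which the edges and vertices of $\sigma$ map, but the apex of $\sigma$ may lie on a stratum of lower dimension than the interiors of the two slanted edges of $\sigma$ meeting there, violating the monotone-dimension condition along $\alpha_1$. To resolve this I would insist that $\Delta$ be a simplicial refinement of a rectangular subdivision whose vertical lines $\{s_k\}\times[0,1]$ are chosen so that each restricted slice $h(s_k,-)$ is an exit path compatible with the triangulation, and so that each 2-simplex to be peeled has its non-left boundary coinciding up to reparameterization with such a vertical slice. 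Since each $h(s_k,-)$ is an exit path by hypothesis, $\alpha_1$ inherits the exit property. With this refinement the induction closes, completing the proof.
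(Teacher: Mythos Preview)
You correctly identify the obstacle that is absent in the unstratified case---that the intermediate path obtained by detouring around a peeled triangle need not be an exit path---but your proposed repair does not work.  When you peel a single 2-simplex $\sigma$ having one edge on $\{0\}\times[0,1]$, the replacement for that edge is the broken path along the \emph{other two} edges of $\sigma$, meeting at the apex.  This bent path cannot coincide (even up to reparameterization) with a vertical slice $\{s_k\}\times[0,1]$ of a rectangular grid, so the hypothesis you want to impose on the triangulation is vacuous.  If instead you try to peel whole rectangles of a grid one at a time, the intermediate paths acquire a horizontal connector $h(-,t_j)\vert_{[s_{k-1},s_k]}$ traversed backwards, and nothing forces that to respect the exit condition.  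So the global vertical slices of $h$, while themselves exit paths, do not help you control the intermediate paths produced by the peeling.

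The paper's fix is local rather than global: take the barycentric subdivision of a tame triangulation that is fine with respect to $\{U_i\}$.  Each resulting 2-simplex $\sigma$ then has vertices $v_1,v_2,v_3$ (an original vertex, an edge midpoint, and a barycenter) with the property that $h$ carries $\overline{v_1v_2}-\{v_1\}$ into a single stratum and all of $\sigma-\overline{v_1v_2}$ into a single (possibly higher) stratum.  This two-stratum structure on $\sigma$ lets you parameterize $\sigma$ by a square so that \emph{every} vertical slice of the parameterization is an exit path; the inductive peeling of Proposition~\ref{elhotfactor} then goes through unchanged.  The key idea you are missing is that barycentric subdivision converts the arbitrary stratum pattern on a tame simplex into this controlled two-piece pattern.
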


\begin{proof}
As in the proof of proposition \ref{elhotfactor}, it suffices to find a suitable triangulation of $[0,1] \times [0,1]$.  In our case a triangulation is ``suitable'' if each triangle is mapped into one of the charts $U_i$, and if furthermore for each triangle $\sigma$ we may order the vertices $v_1, v_2, v_3$ in such a way that $h$ carries the half-open line segment $\overline{v_1 v_2} - v_1$ into a stratum $X_k$, and the third-open triangle $\overline{v_1 v_2 v_3} - \overline{v_1 v_2}$ into a stratum $X_\ell$.  In that case we may find a parameterization $g:[0,1] \times [0,1] \to \sigma$ of $\sigma$ with the property that for each $t$ the path $[0,1] \to \{t\} \times [0,1] \to \sigma \to X$ has the exit property in $X$.  We may find a triangulation with these properties by picking a triangulation that is fine enough with respect to $\{U_i\}$, and taking its barycentric subdivision.
\end{proof}

We also may discuss elementary 3-dimensional homotopies between homotopies between exit paths, and a version of proposition \ref{3delhotfactor} holds.

\begin{definition}
\label{def98}
Let $(X,S)$ be a topologically stratified space, and let $\{U_i\}_{i \in I}$ be a $d$-cover of $X$.  Let $x,y \in X$, $\alpha,\beta \in EP^M(x,y)$, and let $h_0,h_1:[0,1] \times [0,1] \to X$ be homotopies from $\alpha$ to $\beta$.  A homotopy $t \mapsto h_t$ between $h_0$ and $h_1$ is called \emph{$i$-elementary} if there is a closed rectangle $[a,b] \times [c,d] \subset [0,1] \times [0,1]$ such that
\begin{enumerate}
\item $h_t(u,v)$ is independent of $t$ for $(u,v) \notin [a,b] \times [c,d]$.
\item For each $t$, $h_t\big([a,b] \times [c,d]\big) \subset U_i$.
\end{enumerate}
\end{definition}

\begin{proposition}
\label{prop99}
Let $(X,S)$, $\{U_i\}$, $x,y,\alpha,\beta$ be as in definition \ref{def98}.  Let $h$ and $g$ be homotopies from $\alpha$ to $\beta$.  Suppose that $h$ and $g$ are homotopic.  Then there is a sequence $h = k_0, k_1,\ldots,k_n = g$ of homotopies from $\alpha$ to $\beta$ such that, for each $i$, $k_i$ is homotopic to $k_{i+1}$ via an elementary homotopy.
\end{proposition}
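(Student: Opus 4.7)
The plan is to mirror the proof of Proposition \ref{3delhotfactor}, but with the triangulation refinement used in Proposition \ref{prop97} so that tameness and the exit property are preserved at every stage. First I would use Proposition \ref{prop97} to factor $h$ as a composition $h_m \circ \cdots \circ h_1$ of elementary tame homotopies and $g$ as $g_\ell \circ \cdots \circ g_1$; the proof of Proposition \ref{prop97} actually exhibits $h$ and its factorization as linked by a sequence of elementary 3-dimensional tame homotopies (the mapping cylinders on the successive peelings are themselves elementary in the sense of Definition \ref{def98}). This lets me splice those 3-dimensional elementary homotopies into the desired sequence $k_0, k_1, \ldots$ and reduce, by a straightforward induction on $m + \ell$, to the case $m = \ell = 1$, i.e.\ the case in which $h$ and $g$ are themselves elementary.

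Next I would assume given a tame homotopy $H \colon [0,1]^3 \to X$ from an elementary $h$ to an elementary $g$, and produce a suitable triangulation of the cube $[0,1]^3$. By compactness one may choose a continuous triangulation so fine that (i) each closed $3$-simplex $\sigma$ is carried by $H$ into some $U_{i(\sigma)}$, and (ii) the interior of every simplex of every dimension maps into a single stratum of $X$ (so that $H$ restricted to the triangulation is tame). As in Proposition \ref{prop97}, I would then pass to the barycentric subdivision in order to obtain an ordering of the vertices of each simplex compatible with the dimensions of the strata they hit; this guarantees that for each $3$-simplex $\sigma$ there is a parameterization $[0,1] \times [0,1]^2 \to \sigma$ under which, for every fixed value of the first coordinate, the restriction to $[0,1]^2$ is a tame homotopy between exit paths.

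I would then iterate the peeling procedure from the proof of Proposition \ref{3delhotfactor}: starting from a $3$-simplex $\sigma$ adjacent to the face $\{0\} \times [0,1]^2$, remove $\sigma$, reparameterize the closure of the complement as another cube $[0,1]^3$, and apply induction on the number of $3$-simplices. Each peeled $\sigma$ gives rise to an elementary $3$-dimensional tame homotopy between consecutive $k_i$'s (by construction its image lies in a single $U_{i(\sigma)}$ and only modifies the map on a rectangle), and the mapping cylinder glues the two halves of the factorization. The base case, when only one $3$-simplex remains, is handled directly: its parameterization is itself an elementary $3$-dimensional homotopy.

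The main obstacle is the last tameness bookkeeping: one must check that every intermediate $k_i$ produced by the peeling is genuinely a tame homotopy between exit paths (an object of the $\tame$-groupoids), rather than merely a continuous one, and likewise that each elementary homotopy $k_i \rightsquigarrow k_{i+1}$ witnesses this in the sense of Definition \ref{def98}. Both requirements reduce to the same condition on the triangulation, namely that the vertices of each simplex can be ordered so that the dimensions of the ambient strata increase along the order; the barycentric-subdivision trick of Proposition \ref{prop97} supplies precisely this, and once it is in place the proof reduces to the same combinatorial peeling already used in Proposition \ref{3delhotfactor}.
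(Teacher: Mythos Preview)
Your proposal is correct and follows essentially the same approach as the paper: the paper's proof is simply ``Similar to proposition \ref{3delhotfactor},'' and you have fleshed out exactly that, importing the barycentric-subdivision refinement from Proposition \ref{prop97} to handle the tameness and exit-path conditions. Your additional bookkeeping about vertex orderings and tameness of the intermediate $k_i$ is the right elaboration of what the paper leaves implicit.
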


\begin{proof}
Similar to proposition \ref{3delhotfactor}.
\end{proof}

\begin{theorem}
\label{epvk}
The functor $EP_{\leq 2}:\Strat \to \twocat$ satisfies axiom (vK).
\end{theorem}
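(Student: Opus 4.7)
The plan is to mirror the three-part proof of the van Kampen theorem for $\pi_{\leq 2}$ from section \ref{vksubsec}, substituting tame exit paths for arbitrary paths and using propositions \ref{prop97} and \ref{prop99} wherever propositions \ref{elhotfactor} and \ref{3delhotfactor} are invoked. Concretely, I would establish three lemmas, analogous to propositions \ref{res2full}, \ref{resessfull}, and \ref{resesssurj}: that $\res:\twoexit(X) \to \twoexit(\{U_i\}_{i \in I})$ is faithful and full on hom categories, essentially surjective on hom categories, and essentially surjective on objects.

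The faithful-and-full statement is essentially formal and passes through unchanged: a modification is determined by its components at each object, and a 2-morphism of an exit-monodromy functor on the cover assembles to a 2-morphism on $X$ provided the required naturality square commutes for every exit path, which reduces to the case of an exit path contained in a single chart by concatenation. The essential surjectivity on hom categories proceeds as in proposition \ref{resessfull}: given a 1-morphism $\{n_i\}$ of $\res(F) \to \res(G)$, set $n(x) = \varinjlim_i n_i(x)$, define $n(\gamma)$ first for exit paths contained in a single $U_i$ and then extend to arbitrary exit paths $\gamma$ by choosing a factorization $\gamma = \gamma_N \cdots \gamma_1$ with each $\gamma_k$ contained in some chart (using that a subpath of an exit path is still an exit path). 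Showing independence of the factorization and compatibility with 2-morphisms (tame homotopies) uses proposition \ref{prop97} to reduce to elementary tame homotopies, each of which has the form $1_{\gamma_1} \cdot h' \cdot 1_{\gamma_0}$ with $h'$ contained in a chart, where commutativity holds by assumption.

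Essential surjectivity on objects is the main technical step and mirrors proposition \ref{resesssurj}: given $\{F_i\}$ on the cover, define $F(x) := \wtwocolim_{i \mid x \in U_i} F_i(x)$, define $F(\gamma)$ on exit paths first within a chart and then by factorization, and define $F(h)$ on tame homotopies first for elementary tame homotopies (in a chart) and then via proposition \ref{prop97} for arbitrary tame homotopies, with well-definedness on homotopy classes guaranteed by proposition \ref{prop99}. The fact that the factorizations in propositions \ref{prop97} and \ref{prop99} use barycentric subdivisions that respect the stratification on each simplex ensures that the intermediate exit paths and homotopies are themselves exit paths and tame homotopies. Finally, the natural maps $F_i(x) \to F(x)$ are equivalences (filtered colimits of equivalences), assembling to the desired equivalence $\{F_i\} \simeq \res(F)$.

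The hard part will be checking that the construction of $F$ on objects and morphisms actually yields a strict 2-functor on $EP_{\leq 2}(X,S)$ rather than merely a 2-functor compatible with each chart: one must verify functoriality of $F$ on 2-morphisms against both the ordering of elementary pieces in a factorization (appealing to proposition \ref{prop99}) and against the choice of chart containing each piece. The tameness condition is precisely what makes the inductive subdivision arguments of propositions \ref{prop97} and \ref{prop99} applicable, and this is where remark \ref{remtameness} becomes relevant; all other steps are bookkeeping that parallels the unstratified case.
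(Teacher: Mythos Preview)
Your proposal is correct and follows essentially the same approach as the paper: the paper's proof simply states that propositions \ref{res2full}, \ref{resessfull}, and \ref{resesssurj} may be followed almost verbatim after substituting propositions \ref{prop97} and \ref{prop99} for propositions \ref{elhotfactor} and \ref{3delhotfactor}. You have in fact spelled out considerably more detail than the paper does, including the relevant subtleties about well-definedness and the role of tameness, all of which is accurate.
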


\begin{proof}
Let $X$ be a topologically stratified space, and let $\{U_i\}_{i \in I}$ be a $d$-cover of $X$.  We have to show that $\res:\twoexit(X) \to \twoexit\big(\{U_i\}\big)$ is an equivalence of 2-categories.  It suffices to show that $\res$ is essentially fully faithful and essentially surjective.  The proofs of these facts in the unstratified case -- propositions \ref{res2full}, \ref{resessfull}, and \ref{resesssurj} -- may be followed almost verbatim to obtain the same results, after substituting propositions \ref{prop97} and \ref{prop99} for propositions \ref{elhotfactor} and \ref{3delhotfactor}.
\end{proof}

\subsection{Proof of the main theorem}

We may now prove the main theorem stated in the introduction.

\begin{theorem}
\label{finaltheorem}
The 2-functor $EP_{\leq 2}$ is a stratified 2-truncation.  Because of this, for any topologically stratified space $(X,S)$ the 2-category of $S$-constructible stacks on $X$ is naturally equivalent to the 2-category of $\Cat$-valued 2-functors on $EP_{\leq 2}(X,S)$.
\end{theorem}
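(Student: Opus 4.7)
The proof is essentially bookkeeping: all the genuine work has been done in the preceding sections, so my plan is simply to assemble the pieces.

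First I would verify that $EP_{\leq 2}$ is a stratified 2-truncation. By construction (see the discussion preceding Theorem \ref{epnh}), $EP_{\leq 2}$ is a functor $\Strat \to \twocat$: a stratum-preserving map $f$ preserves the tameness of cubes $[0,1]^n \to X$, hence induces a strict 2-functor $f_*$, and this assignment is strictly functorial in $f$. To check the four axioms of Definition \ref{2truncaxioms}, I would invoke the three theorems just proved: axioms (N) and (H) are exactly the content of Theorem \ref{epnh}, axiom (C) is Theorem \ref{epc}, and the van Kampen axiom (vK) is Theorem \ref{epvk}. This establishes the first assertion of the theorem.

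Second, with $EP_{\leq 2}$ now known to be a stratified 2-truncation, the second assertion follows from the abstract exit 2-monodromy theorem, Theorem \ref{orpimonodromy}, applied to $\orpi_{\leq 2} := EP_{\leq 2}$. That theorem provides, for any topologically stratified space $(X,S)$, a 2-functor
\[
\sfN : \twoexit(X,S;EP_{\leq 2}) \longrightarrow \St_S(X)
\]
and shows it is an equivalence of 2-categories. Unwinding the definition of $\twoexit$, this 2-category is precisely $\twofunct(EP_{\leq 2}(X,S),\Cat)$, which yields the asserted equivalence.

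I do not anticipate any genuine obstacle in this final step: every nontrivial ingredient (the stack property for $\sfN F$, its $S$-constructibility, essentially fully faithfulness via the inductive argument on $\dim X$ using the cone proposition \ref{twomonforcones}, and essential surjectivity via the $d$-cover construction) has already been carried out in the proof of Theorem \ref{orpimonodromy}. The only item that required real analytic content specific to $EP_{\leq 2}$ — namely the factorization of tame homotopies and tame 3-homotopies into elementary pieces (Propositions \ref{prop97} and \ref{prop99}) — was needed to verify axiom (vK), and that verification is already complete. Thus the proof reduces to a one-line citation of Theorems \ref{epnh}, \ref{epc}, \ref{epvk}, and \ref{orpimonodromy}.
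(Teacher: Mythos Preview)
Your proposal is correct and matches the paper's own proof essentially verbatim: the paper's argument is precisely the one-line citation of Theorems \ref{epnh}, \ref{epc}, \ref{epvk} for the axioms and Theorem \ref{orpimonodromy} for the conclusion. Your additional remarks (that $EP_{\leq 2}$ is a functor $\Strat\to\twocat$, and that $\twoexit(X,S;EP_{\leq 2})=\twofunct(EP_{\leq 2}(X,S),\Cat)$) are helpful glosses but add nothing beyond what the paper already takes for granted.
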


\begin{proof}
That $EP_{\leq 2}$ satisfies the axioms of a stratified 2-truncation is the content of theorems \ref{epnh}, \ref{epc}, and \ref{epvk}.  The conclusion that the main theorem holds is implied by theorem \ref{orpimonodromy}.
\end{proof}

\emph{Acknowledgements:}  I would like to thank Mark Goresky, Andrew Snowden, and Zhiwei Yun for many helpful comments and conversations.  I would especially like to thank my advisor, Bob MacPherson.  This paper is adapted from a Ph.D. dissertation written under his direction.

\appendix

\section{Stacks}

The word ``stack'' has at at least two different and related meanings in mathematics.  Maybe most frequently it refers to some kind of geometric object that represents a groupoid-valued, rather than a set-valued, functor.  But it may also refer to a sheaf of categories, where the sheaf structure and axioms have been modified to take account of the ``two-dimensional'' nature of categories -- that is, to take account of the fact that categories are most naturally viewed as the objects of a 2-category.  In this appendix we develop some basic properties of stacks in the second sense.

We will proceed in a way that emphasizes the similarity with sheaves.  It requires generalizing the basic definitions of category theory, such as limits and adjoint functors, to 2-categories.  We summarize what we need from the theory of 2-categories in appendix B.

\subsection{Prestacks}

\begin{definition}
A \emph{prestack} on a 2-category $\bI$ is a 2-functor $\scrC:\bI^{op} \to \Cat$.  A prestack on a topological
space $X$ is a prestack on the partially ordered set of open subsets of $X$, regarded as a 2-category whose objects are the open sets, whose 1-morphisms are the inclusion maps, and with trivial 2-morphisms.  In detail, a prestack $\scrC$ on a space $X$ consists of:
\begin{enumerate}
\item[(0)] An assignment that takes an open set $U$ to a category $\scrC(U)$.
\item[(1)] A contravariant assignment that takes an inclusion $V \subset U$ to a restriction functor
	    $\scrC(U) \to \scrC(V)$
\item[(2)] For each triple of open subsets $W \subset V \subset U$ an isomorphism between 
              $\scrC(U) \to \scrC(V) \to \scrC(W)$ and $\scrC(U) \to \scrC(W)$
\item[(3)] Such that the tetrahedron associated to each
              quadruple $Y \subset W \subset V \subset U$ commutes:
$$\tetrah{\scrC(U)}{\scrC(V)}{\scrC(W)}{\scrC(Y)}$$
\end{enumerate}
\end{definition}

Write $\Prest(\bI)$ for the 2-category of prestacks on a 2-category $\bI$, and $\Prest(X)$ for the 2-category of prestacks on a space $X$.

\begin{definition}
Let $\scrC$ be a prestack on $X$.  The \emph{stalk} of $\scrC$ at $x \in X$ is the category
$$\scrC_x =_{\mathit{def}} \wtwocolim{U \mid U \owns x} \scrC(U)$$
\end{definition}

\begin{remark}
Let $\bI$ be a 1-category.  The 2-category of 2-functors $\bI^{op} \to \Cat$ is equivalent to
the 2-category of so-called \emph{fibered categories} over $\bI$ (\cite{sga1}).  The theory of stacks is usually (\cite{sga1}, \cite{giraud}, \cite{vistoli}) developed using fibered categories rather than 2-functors.
\end{remark}

\subsection{Stacks}

A \emph{stack} is a prestack on $X$ that satisfies a kind of sheaf condition.  We find it convenient to phrase this condition in terms of 2-limits over an open cover; in order to make this precise we need our open covers to be closed under finite intersections.  We will call these ``descent covers'' or $d$-covers.

\begin{definition}
\label{def-dcover}
A \emph{$d$-cover} of a space $U$ is a subset $I \subset \text{Open}(U)$ of the set of open subsets of $U$ that is closed under finite intersections, and that covers $U$.
\end{definition}

Let $\scrC$ be a prestack on $X$.  If $U$ is an open subset of $X$ and $\{U_i\}_{i \in I}$ is a $d$-cover of $U$, then the restriction functors $\scrC(U) \to \scrC(U_i)$ assemble to a functor
$$\scrC(U) \to \wtwolim{I}{\scrC(U_i)}$$

\begin{definition}
Let $\scrC$ be a prestack on a space $X$.  Then $\scrC$ is a \emph{stack} if for each open set $U \subset X$ and each $d$-cover $\{U_i\}_{i \in I}$ of $U$, the restriction functor
$$\scrC(U) \to \wtwolim{I}{\scrC(U_i)}$$
is an equivalence of categories.  Let $\St(X) \subset \text{Prest}(X)$ denote the full subcategory of the 2-category of prestacks on $X$ whose objects are stacks.
\end{definition}

\begin{theorem}
\begin{enumerate}
\item
Let $\scrP$ and $\scrC$ be prestacks on $X$.  Suppose $\scrC$ is a stack.  The prestack $\shHom(\scrP,\scrC)$ on $X$ that takes an open set $U$ to the category $\Hom(\scrP\vert_U,\scrC\vert_U)$
is a stack.

\item
Let $\scrC$ be a stack on $X$.  Let $c$ and $d$ be two objects of $\scrC(X)$.  The presheaf of hom sets
$U \mapsto \Hom_{\scrC(U)}(c\vert_U,d\vert_U)$ is a sheaf.

\item
Let $\scrC$ and $\scrD$ be two stacks on $X$.  The map $\phi:\scrC \to \scrD$ has a left adjoint (resp. has a right adjoint, resp. is an equivalence) if and only if the maps $\phi_x:\scrC_x \to \scrD_x$ on stalks all have left adjoints (resp. all have right adjoints, resp. are all equivalences).

\item
The inclusion 2-functor $\St(X) \to \Prest(X)$ has a right adoint, called \emph{stackification}.  Denote the stackification of $\scrP$ by $\scrP^\dagger$.  The adjunction morphism $\scrP \to \scrP^\dagger$
induces an equivalence on stalks.

\end{enumerate}
\end{theorem}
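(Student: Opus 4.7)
The plan is to tackle the four parts in order, with each feeding into the next: descent for morphisms is the core engine, hom-sheaves drop out as its $\Sets$-valued shadow, the stalkwise characterization of adjoints follows by combining the first two parts with standard sheaf arguments, and stackification is constructed explicitly with its stalkwise property verified by direct colimit manipulation.

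For part (1), fix a $d$-cover $\{U_i\}_{i\in I}$ of an open set $V\subset X$. An object of $2\varprojlim_I \shHom(\scrP,\scrC)(U_i)$ is a coherent system $\phi_i:\scrP|_{U_i}\to \scrC|_{U_i}$ together with isomorphisms $\phi_i|_{U_{ij}}\simeq \phi_j|_{U_{ij}}$ satisfying the tetrahedron condition. I want to glue these to a morphism $\phi:\scrP|_V\to\scrC|_V$. For each open $W\subset V$ and each $p\in\scrP(W)$, the family $\phi_i(p|_{W\cap U_i})\in\scrC(W\cap U_i)$ is descent data over the $d$-cover $\{W\cap U_i\}$ of $W$, hence descends (uniquely up to coherent iso) to an object of $\scrC(W)$ because $\scrC$ is a stack. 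The analogous descent for 1- and 2-morphisms of $\scrP(W)$ produces the values $\phi(W):\scrP(W)\to\scrC(W)$, and compatibility with restriction is automatic. Fully faithfulness on the level of 2-morphisms between such $\phi$'s reduces to the same kind of argument one dimension down.

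Part (2) is the one-categorical shadow: for $c,d\in\scrC(X)$ the presheaf $U\mapsto \Hom_{\scrC(U)}(c|_U,d|_U)$ is exactly the sheaf of 1-morphisms in $\shHom(\underline{*},\scrC)$ for the trivial prestack $\underline{*}$ pointed by $c$ and $d$; alternatively, the hom set of a 2-limit of categories is the 1-limit of hom sets, which in the case of a $d$-cover is exactly the sheaf condition. For part (3), the equivalence criterion follows by combining (2) with the standard fact that a map of sheaves is an iso iff it is so on stalks: fully faithfulness of $\phi$ is an iso-on-stalks statement for the morphism of hom-sheaves produced in (2), and essential surjectivity is checked locally and then glued by (1). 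For the adjoint statement, I construct the left adjoint $\psi:\scrD\to\scrC$ locally from the stalkwise adjoints $\psi_x$ and glue using part (1): on each member of a $d$-cover trivializing enough structure, uniqueness of adjoints up to unique isomorphism produces coherent descent data for $\psi$, and likewise for the unit and counit, which then assemble into global natural transformations by the sheaf property of homs.

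For part (4) I construct $\scrP^\dagger$ by the standard 2-categorical ``plus'' procedure: set $\scrP^\dagger(U)$ to be the 2-colimit, over refinements of $d$-covers $\{U_i\}$ of $U$, of the 2-limit $2\varprojlim_I \scrP(U_i)$ (if one pass does not suffice, iterate as in the classical sheafification). Functoriality and the universal property are forced by the stack condition on any target stack $\scrD$, giving the adjunction asserted in the statement. The stalk equivalence $\scrP_x\to \scrP^\dagger_x$ is checked by swapping the order of the two filtered 2-colimits (over neighborhoods of $x$ and over refinements of covers) and noting that both descent data over a $d$-cover and its underlying section have the same germ at $x$.

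The main obstacle is part (3), specifically the passage from stalkwise adjoints to a global adjoint: one must not only exhibit $\psi$ locally but also check that the unit and counit assemble coherently, which requires a careful application of (1) to the prestack of natural transformations and a verification that the triangle identities descend. The other parts are in essence bookkeeping with 2-limits, but throughout the proof one must take care that the isomorphisms in the descent data are tracked up to the tetrahedral coherence required by the definition of a prestack.
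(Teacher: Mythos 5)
You should know at the outset that the paper offers no proof of this theorem: it is stated as background in Appendix~A, so there is nothing to compare against and I will assess your argument on its own terms. Parts (1), (2), (4) and the equivalence clause of (3) are handled by the standard arguments and your sketches of them are sound: gluing a morphism of prestacks object-by-object in $\scrP$ via descent in $\scrC$; reading off (2) from the fact that hom-sets in a $2$-limit of categories are the $1$-limit of the hom-sets; detecting full faithfulness on the stalks of the hom-sheaves and gluing local essential preimages by the stack property; and the plus construction together with the interchange of the two filtered colimits for the stalks of $\scrP^\dagger$. (One small remark: the universal property your construction in (4) actually yields is that stackification is \emph{left} adjoint to the inclusion $\St(X)\to\Prest(X)$, which is the correct statement; the theorem's ``right adjoint'' is a slip.)

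The adjoint clause of (3) is where there is a genuine gap, and the step you need is in fact false. Your plan is to produce the adjoint ``locally from the stalkwise adjoints'' and then glue by uniqueness of adjoints; but existence of an adjoint at every stalk does not give an adjoint over any neighborhood, so there is no $d$-cover on whose members your gluing can start. Concretely, take $X=\bbR$, $K=\{1/n : n\geq 1\}$, and let $\scrF(U)$ be the poset, ordered by inclusion, of subsets $S\subseteq K\cap U$ that are closed in $U$, with restriction $S\mapsto S\cap V$; this is a sheaf of posets, hence a stack. Every stalk of the unique morphism $\phi:\scrF\to *$ to the terminal stack has a right adjoint: the stalk at $0$ is trivial (any $S$ closed in a neighborhood of $0$ is finite, hence has empty germ at $0$), the stalk at $1/n$ is a two-element chain, and all other stalks are trivial, so each stalk category has a terminal object. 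But $\phi$ has no right adjoint: evaluation at $U$ is a $2$-functor $\St(X)\to\Cat$ and so would produce a terminal object of $\scrF(U)$, i.e.\ a largest finite subset of $K\cap U$, which does not exist once $0\in U$. (Passing to opposite posets gives the same failure for left adjoints.) The statement your method does prove --- and the one you should substitute --- is that $\phi$ has a left (resp.\ right) adjoint if and only if its restriction to each member of some open cover does; there uniqueness of adjoints up to unique isomorphism supplies coherent descent data for the local adjoints, units and counits, and part (1) glues them. The stalkwise criterion is correct for equivalences and for full faithfulness, but not for bare adjoints.
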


\subsection{Operations on stacks}

\begin{definition}
Let $X$ and $Y$ be topological spaces, and let $f:X \to Y$ be a continuous map.  If $\scrC$ is a prestack on $X$ let $f_* \scrC$ denote the prestack on $Y$ that associates to an open set $U$ the category $f_* \scrC(U) := \scrC\big(f^{-1}(U)\big)$.  We call $f_*\scrC$ the \emph{pushforward} of $\scrC$. 
\end{definition}

It is easy to verify that $f_*$ defines a strict 2-functor $\Prest(X) \to \Prest(Y)$, and that if $\scrC$ is a stack then $f_*\scrC$ is also.  The definition for the pullback of a stack is more complicated -- it requires a direct  2-limit over neighborhoods, followed by stackification:

\begin{definition}
Let $X$ and $Y$ be topological spaces, and let $f:X \to Y$ be a continuous map.  If $\scrC$ is a prestack on $Y$, let $f_p^* \scrC$ be the prestack on $X$ that associates to an open set $U \subset X$ the category
$$f_p^* \scrC = \wtwocolim{V \mid V \supset f(U)}{\scrC(V)}$$
Let $f^* \scrC$ denote the stackification of the prestack $f_p^* \scrC$.
\end{definition}

\begin{example}
If $x \in X$ and $i:\{x\} \hookrightarrow X$ denote the inclusion map, the prestack $i_p^* \scrC$ on $\{x\}$ coincides with the stalk $\scrC_x$.  Thus, if $f: X \to Y$, we have an equivalence of stalk categories $\big( f_p^* \scrC\big)_x \cong \scrC_{f(x)}$.
\end{example}

\begin{example}
If $U \subset X$ is open, and $j:U \hookrightarrow X$ denotes the inclusion map, we have 
$j_p^* \scrC(V) = \scrC(V)$ when $V \subset U$ is another open set.  If $\scrC$ is a stack then $j_p^* \scrC$ is also.  We often denote $j^* \scrC$ by $\scrC \vert_U$.
\end{example}

\begin{proposition}
Let $X$ and $Y$ be topological spaces, and let $f:X \to Y$ be a continuous map.

The 2-functor $f_*:\St(X) \to \St(Y)$ is right 2-adjoint to the 2-functor $f^*:\St(Y) \to \St(X)$.

\end{proposition}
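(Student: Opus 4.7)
The plan is to establish the 2-adjunction in two steps: first construct the adjunction at the level of prestacks between $f_p^*$ and $f_*$, then promote it to an adjunction of stacks using the universal property of stackification.

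First I would verify that $f_p^*$ is left 2-adjoint to $f_*$ on prestacks. Given prestacks $\scrC$ on $Y$ and $\scrD$ on $X$, I want a natural equivalence of categories
$$\Hom_{\Prest(X)}(f_p^*\scrC,\scrD) \simeq \Hom_{\Prest(Y)}(\scrC,f_*\scrD).$$
The unit $\eta:\scrC \to f_* f_p^* \scrC$ is built componentwise from the canonical functors $\scrC(V) \to \twocolim_{W \supset f(f^{-1}(V))} \scrC(W) = f_p^*\scrC(f^{-1}(V)) = f_* f_p^*\scrC(V)$, which make sense because $V$ itself lies in the indexing diagram. The counit $\epsilon:f_p^* f_* \scrD \to \scrD$ is built from the functors
$$f_p^* f_* \scrD(U) = \wtwocolim{V \supset f(U)}{\scrD(f^{-1}(V))} \longrightarrow \scrD(U),$$
which exist by the universal property of the 2-colimit since $U \subset f^{-1}(V)$ whenever $V \supset f(U)$. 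Verifying the triangle identities is a direct diagram chase using the definition of the 2-colimit; this is the routine but bookkeeping-heavy step.

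Second, I would invoke the stackification adjunction, which is recorded earlier as the 2-functor $(-)^\dagger: \Prest(X) \to \St(X)$ being left 2-adjoint to the inclusion. By definition $f^*\scrC = (f_p^*\scrC)^\dagger$. For a stack $\scrD$ on $X$, observe that $f_*\scrD$ is itself a stack on $Y$ (also recorded earlier, and immediate from the fact that $f^{-1}$ takes $d$-covers to $d$-covers). We then compose the two adjunctions:
$$\Hom_{\St(X)}(f^*\scrC,\scrD) = \Hom_{\St(X)}((f_p^*\scrC)^\dagger,\scrD) \simeq \Hom_{\Prest(X)}(f_p^*\scrC,\scrD) \simeq \Hom_{\Prest(Y)}(\scrC,f_*\scrD) = \Hom_{\St(Y)}(\scrC,f_*\scrD),$$
where the final equality holds because $f_*\scrD$ is a stack, so morphisms of prestacks into it from $\scrC$ agree with morphisms of stacks from any stackification of $\scrC$ — and in particular, when $\scrC$ is already a stack, with morphisms of stacks from $\scrC$. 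All of these equivalences are 2-natural in both variables, so composing them yields the desired 2-adjunction.

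The main obstacle is keeping the 2-categorical coherence data straight in the first step: unlike the 1-categorical case, one must check that the proposed unit and counit are compatible not only up to isomorphism but that the resulting isomorphisms satisfy the appropriate tetrahedral coherence. In practice this reduces to invoking the universal property of the 2-colimit defining $f_p^*$, which guarantees that all the required 2-isomorphisms are supplied coherently. Once the prestack-level adjunction is in hand, the passage to stacks is formal from the stackification adjunction, so no further substantive work is required.
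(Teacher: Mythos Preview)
The paper states this proposition in Appendix A without proof; it is listed among the basic facts about stacks that the appendix records for reference. Your argument is the standard one and is correct: establish the $(f_p^*, f_*)$ adjunction at the prestack level via the explicit unit and counit you describe, then compose with the stackification adjunction $(-)^\dagger \dashv \iota$ to obtain $(f^*, f_*)$ on stacks.

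One small remark on presentation: your justification of the final equality $\Hom_{\Prest(Y)}(\scrC,f_*\scrD) = \Hom_{\St(Y)}(\scrC,f_*\scrD)$ is more elaborate than needed. Since $\St(Y)$ is by definition a \emph{full} sub-2-category of $\Prest(Y)$, once $\scrC$ and $f_*\scrD$ are both stacks the hom-categories agree on the nose; there is no need to invoke stackification again at this point. Otherwise the chain of equivalences is exactly right, and your observation that the coherence in step one is supplied by the universal property of the filtered 2-colimit is the correct way to avoid getting lost in the bookkeeping.
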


\subsection{Descent for stacks}
\label{descentforstacks}

Let $X$ be a topological space, and let $\{U_i\}_{i \in I}$ be a $d$-cover of $X$.  Suppose we are given the following data:

\begin{enumerate}
\item[(0)] For each $i \in I$ a stack $\scrC_i$ on $U_i$.
\item[(1)] For each $i, j \in I$ with $U_j \subset U_i$, an equivalence of stacks $\scrC_i \vert_{U_j} \stackrel{\sim}{\to} \scrC_j$.
\item[(2)] For each $i,j,k \in I$ with $U_k \subset U_j \subset U_i$, an isomorphism between the composite equivalence
$\scrC_i \vert_{U_j} \vert_{U_k} \stackrel{\sim}{\to} \scrC_j \vert_{U_k} \stackrel{\sim}{\to} \scrC_k$
and $\scrC_i \vert_{U_k} \stackrel{\sim}{\to} \scrC_k$.
\item[(3)] Such that for each $i,j,k,\ell \in I$ with $U_\ell \subset U_k \subset U_j \subset U_i$, the tetrahedron
commutes:
$$\tetrah{\scrC_i\vert_{U_j}\vert_{U_k}\vert_{U_\ell}}{\scrC_j \vert_{U_k} \vert_{U_\ell}}{\scrC_k \vert_{U_\ell}}{\scrC_\ell}$$
\end{enumerate}

We will abuse terminology and refer to such data as a \emph{stack on the $d$-cover $\{U_i\}$}.  Stacks on $\{U_i\}$ form the objects of a 2-category $\St(\{U_i\})$ in the natural way.  If $\scrC$ is a stack on $X$ then $\scrC_i := \scrC\vert_{U_i}$ and the identity 1- and 2-morphisms form a stack on the $d$-cover $\{U_i\}$, and the assignment $\scrC \mapsto \{\scrC_i := \scrC\vert_{U_i}\}$ forms a strict 2-functor in a natural way.

\begin{theorem}
The natural restriction 2-functor $\St(X) \to \St(\{U_i\})$ is an equivalence of 2-categories.
\end{theorem}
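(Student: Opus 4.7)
The plan is to build an explicit quasi-inverse $\sfG : \St(\{U_i\}_{i \in I}) \to \St(X)$ to the restriction 2-functor and show that both compositions are 2-naturally equivalent to the respective identities. The key simplifying observation is that since $I$ is already closed under finite intersections, the descent diagram encoded by a stack on the cover is indexed by the poset $I$ itself (no further nerve-style decoration is needed), so gluing just amounts to taking a 2-limit over $I$.

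First I would define, for each stack $\{\scrC_i, \phi_{ij}, \alpha_{ijk}\}$ on the cover and each open $V \subset X$,
$$\sfG(\{\scrC_i\})(V) \;:=\; \wtwolim{i \in I}{\scrC_i(U_i \cap V)},$$
where the diagram $I \to \Cat$ uses the gluing equivalences $\phi_{ij}$ (restricted to $U_j \cap V$) as transition functors and the 2-isomorphisms $\alpha_{ijk}$ as their coherence on composable triples; the tetrahedron condition ensures this is a genuine 2-diagram. Restrictions along $W \subset V$ are induced on the 2-limits by the restriction functors $\scrC_i(U_i \cap V) \to \scrC_i(U_i \cap W)$, giving a prestack on $X$. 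Morphisms and 2-morphisms of stacks on $\{U_i\}$ are handled in parallel fashion, yielding a strict 2-functor $\sfG$.

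Next I would verify that $\sfG(\{\scrC_i\})$ is actually a stack by interchanging 2-limits: for any $d$-cover $\{V_\alpha\}$ of an open $V \subset X$,
$$\wtwolim{\alpha}{\sfG(\{\scrC_i\})(V_\alpha)} \;\simeq\; \wtwolim{\alpha}{\wtwolim{i}{\scrC_i(U_i \cap V_\alpha)}} \;\simeq\; \wtwolim{i}{\wtwolim{\alpha}{\scrC_i(U_i \cap V_\alpha)}} \;\simeq\; \wtwolim{i}{\scrC_i(U_i \cap V)},$$
where the last equivalence uses that each $\scrC_i$ is itself a stack on $U_i$ applied to the $d$-cover $\{U_i \cap V_\alpha\}_\alpha$ of $U_i \cap V$. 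The composite recovers $\sfG(\{\scrC_i\})(V)$.

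Then I would verify that $\sfG$ is a quasi-inverse to restriction. For a stack $\scrD$ on $X$, the glued prestack has $\sfG(\res\scrD)(V) = \wtwolim{i}{\scrD(U_i \cap V)}$, which is equivalent to $\scrD(V)$ by the stack axiom applied to the $d$-cover $\{U_i \cap V\}_i$ of $V$; naturality in $V$ yields the first equivalence of 2-functors. Conversely, for a stack $\{\scrC_i\}$ on the cover, restricting $\sfG(\{\scrC_i\})$ to $U_j$ and evaluating on $V \subset U_j$ yields $\wtwolim{i}{\scrC_i(U_i \cap V)}$; using the gluing equivalences to identify each $\scrC_i(U_i \cap V)$ with $\scrC_j(U_i \cap V)$ (legitimate since $U_i \cap V \subset U_{ij}$ and $U_{ij} \in I$) reduces this to the 2-limit of $\scrC_j$ over the $d$-cover $\{U_i \cap V\}_i$ of $V$, which equals $\scrC_j(V)$ by the stack property of $\scrC_j$.

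The principal obstacle will be the 2-categorical coherence bookkeeping: each of the equivalences above must be assembled compatibly with the data $\phi_{ij}$ and $\alpha_{ijk}$, and one must check that the would-be unit and counit really are 2-natural equivalences of 2-functors and satisfy the coherence needed for an adjoint equivalence. This is a routine but voluminous diagram chase; its bite can be substantially reduced by invoking the sheaf property of $\shHom$ stated in the previous subsection, which reduces essential full-faithfulness of restriction to ordinary descent for sheaves of hom sets. With that in hand the remaining work is essentially surjectivity, which is exactly what the explicit formula for $\sfG$ furnishes.
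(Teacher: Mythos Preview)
The paper states this theorem in Appendix~A without proof, so there is no argument to compare your proposal against. Your approach --- building an explicit quasi-inverse by $\sfG(\{\scrC_i\})(V) = \twolim_{i \in I}\scrC_i(U_i \cap V)$, checking the stack axiom by interchanging 2-limits, and then verifying the two round-trips using the stack property of $\scrD$ and of the $\scrC_j$ --- is the standard one, and it is correct. Two remarks on the write-up: first, the transition functor in your diagram $I \to \Cat$ should be described as the composite $\scrC_i(U_i \cap V) \to \scrC_i(U_j \cap V) \xrightarrow{\phi_{ij}} \scrC_j(U_j \cap V)$ rather than just ``$\phi_{ij}$ restricted''; second, your shortcut for essential full-faithfulness via the stack property of $\shHom(\scrC,\scrD)$ is exactly right, since a morphism in $\St(\{U_i\})$ between $\res\scrC$ and $\res\scrD$ is by unwinding the definitions an object of $\twolim_i \shHom(\scrC,\scrD)(U_i)$. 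The coherence verifications you flag as the ``principal obstacle'' are indeed routine given the tetrahedron condition in the descent datum, and there is no hidden difficulty.
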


\begin{remark}
The 2-category $\St(\{U_i\})$ may be interpreted as an inverse limit (or ``inverse 3-limit'') of the 2-categories $\St(U_i)$.  There is a sense then in which the theorem means stacks form a \emph{2-stack}.  See \cite{breen2}.
\end{remark}

\section{2-categories}

In this appendix we summarize some of the theory of 2-categories, and fix our conventions.

\begin{definition}
A \emph{strict 2-category} $\bC$ consists of
\begin{enumerate}
\item a collection $\mathrm{Ob}(\bC)$ of \emph{objects}
\item for each pair $x,y \in \mathrm{Ob}(\bC)$
a category $\Hom_{\bC}(x,y)$
\item for each triple $x,y,z \in \mathrm{Ob}(\bC)$, a \emph{composition functor} $\Hom_\bC(y,z) \times \Hom_\bC(x,y) \to \Hom_\bC(x,z)$.  
\end{enumerate}
The composition functors are assumed to satisfy associativity and to have units
in the strict sense: for each object $x \in \bC$, there is an object $1_x$ of $\Hom(x,x)$ such
that for each $y$, $\Hom(x,y) \stackrel{\circ 1_x}{\longrightarrow} \Hom(x,y)$ and $\Hom(y,x) \stackrel{1_x \circ}{\longrightarrow} \Hom(y,x)$
are the identity functors, and such that the following diagram commutes
$$
\xymatrix{
\Hom(z,w) \times \Hom(y,z) \times \Hom(x,y) \ar[r] \ar[d] & \Hom(z,w) \times \Hom(x,z) \ar[d] \\
\Hom(y,z) \times \Hom(x,y) \ar[r] & \Hom(x,w)
}$$
for each $x,y,z,w$.
Objects of $\Hom_\bC(x,y)$ are called \emph{1-morphisms} of $\bC$, and morphisms of $\Hom_\bC(x,y)$ are called \emph{2-morphisms} of $\bC$.
\end{definition}

We will also use the following terminology:

\begin{definition}
A \emph{(2,1)-category} is a 2-category $\bC$ all of whose 2-morphisms are invertible.
\end{definition}

\begin{remark}
If $\cat$ denotes the cartesian closed category whose objects are categories, and whose morphisms are
functors, then a strict 2-category is a $\cat$-enriched category in the sense of \cite{enrichedcat}.  If $\gpd$ denotes the full subcategory of $\cat$ whose objects are groupoids, then a $(2,1)$-category is a $\gpd$-enriched category.
\end{remark}

\begin{example}
There is a 2-category $\Cat$ whose objects are categories, and where the usual categories of functors and natural transformations are the hom categories.  
\end{example}

Note that we use a different symbol for the 2-category $\Cat$ than for the 1-category $\cat$.  $\Cat$ is the more natural object.

\begin{remark}
There is a more natural notion of \emph{weak} 2-category, where the associativity diagram above is required to commute only up to isomorphism, and these isomorphisms are required to satisfy some equations of their own.  Every weak 2-category is equivalent in the appropriate sense to a strict one.  Moreover, the 2-categories encountered in this paper are either strict already (such as the 2-category of stacks or of prestacks) or else may be easily made strict by a trick (such as the fundamental 2-groupoid and the exit-path 2-category).  We have therefore decided to develop this paper in terms of strict 2-categories.
\end{remark}

\begin{definition}
Let $\bC$ be a 2-category.  The \emph{opposite 2-category} $\bC^{op}$ is the 2-category with the same
objects as $\bC$, with $\Hom_{\bC^{op}}(x,y) = \Hom_\bC(y,x)$, and with the evident composition functor.
\end{definition}

\begin{remark}
One could also define a kind of ``opposite 2-category'' by reversing only the 2-morphisms, or by reversing both 1- and 2-morphisms.  We will not need these variations and so we won't introduce notation for them.
\end{remark}

\subsection{Two-dimensional composition in a 2-category}
\label{twodcomposition}

Let $\bC$ be a 2-category.  Let $x$ and $y$ be objects in $\bC$.  If $\alpha$, $\beta$, and $\gamma$ are three 1-morphisms in $\bC$, and $f:\alpha \to \beta$ and $g:\beta \to \gamma$ are 2-morphisms, then we may of course form a third 2-morphism $g \circ f:\alpha \to \beta$ by taking the composition of $g$ and $f$ in the 1-category $\Hom_\bC(x,y)$.  

There is another direction that we may compose 2-morphisms.  Let $x$, $y$ and $z$ be three objects of $\bC$, let $\alpha$ and $\beta$ be two 1-morphisms from $x$ to $y$ and let $\gamma$ and $\delta$ be two 1-morphisms from $y$ to $z$.  Let $f:\alpha \to \beta$ and $h:\gamma \to \delta$ be 2-morphisms.  Then we may form a new 2-morphism $h \star f:\gamma \circ \alpha \to \delta \circ \beta$ by applying the functors $\gamma \circ (-):\Hom_\bC(x,y) \to \Hom_\bC(x,z)$ and $(-) \circ \beta:\Hom_\bC(y,z) \to \Hom_\bC(x,z)$.  That is, let $h \star f$ denote the composite map
$$\gamma \circ \alpha \stackrel{\gamma \circ f}{\to} \gamma \circ \beta \stackrel{h \circ \beta}{\to} \delta \circ \beta$$

Now, let $\bC$ be a 2-category, and let $x$, $y$, and $z$ be objects of $\bC$.  Let $\alpha$, $\beta$, and $\gamma$ be 1-morphisms $x \to y$, and let $\delta$, $\epsilon$, and $\zeta$ be 1-morphisms $y \to z$.  Let $f:\alpha \to \beta$, $g:\beta \to \gamma$, $h:\delta \to \epsilon$, and $k:\epsilon \to \zeta$ be 2-morphisms.  We have the following equation:
$$(k \star g) \circ (h \star f) = (k \circ h) \star (g \circ f)$$
In practice, this equation allows us to ignore the difference between $\circ$ and $\star$ for 2-morphisms.  In fact, it follows that given any collection of 2-morphisms that may be composed using $\circ$ and $\star$, any two compositions agree. 

\subsection{Adjoints and equivalences within a 2-category}

\begin{definition}
\label{def-internaladjoints}
Let $\bC$ be a 2-category.  Suppose $f$ is a 1-morphism between objects $x$ and $y$ in $\bC$.
A \emph{right adjoint} to $f$ is a triple $(g,\eta,\epsilon)$, where $g$ is a 1-morphism $y \to x$ called the \emph{adjoint},
$\eta:1_x \to gf$ and $\epsilon:fg \to 1_y$ are 2-morphisms called the \emph{adjunction morphisms}, and the so-called ``triangle identities'' hold:
the natural maps $\eta g: g \to gfg$ and $g\epsilon:gfg \to g$ compose to $1_g$, and the natural maps 
$f\eta:f \to fgf$ and $\epsilon f:fgf \to f$ compose to $1_f$.
Dually, $(f,\epsilon, \eta)$ is called a \emph{left adjoint} of $g$.  
\end{definition}

We sometimes abuse notation by suppressing the adjunction morphisms $\epsilon$ and $\eta$.

\begin{proposition}
Let $\bC$ be a 2-category, and let $f$ be a 1-morphism in $\bC$.  If $f$ has a right (resp. left) adjoint $(g,\alpha,\beta)$, then $(g,\alpha,\beta)$ is unique up to unique isomorphism commuting with $\alpha$ and $\beta$.
\end{proposition}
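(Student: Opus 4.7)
The plan is to follow the standard 2-categorical ``mate'' construction to exhibit the isomorphism between two right adjoints explicitly, and then to check that it is forced by the compatibility with the unit and counit. Throughout I will use the interchange law for $\circ$ and $\star$ from section \ref{twodcomposition} without further comment, and I will focus on the right-adjoint case since the left-adjoint case is formally dual.

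Suppose $(g,\eta,\epsilon)$ and $(g',\eta',\epsilon')$ are both right adjoints to $f$. First I would define candidate 2-morphisms
$$\phi \;=\; (g'\epsilon)\circ(\eta' g)\colon g \longrightarrow g'fg \longrightarrow g',\qquad \psi \;=\; (g\epsilon')\circ(\eta g')\colon g' \longrightarrow gfg' \longrightarrow g,$$
using that $g'fg$ and $gfg'$ make sense as iterated $\star$-compositions of 1-morphisms. To check $\psi\circ\phi = 1_g$ I would expand the composite, slide $\eta$ and $\eta'$ past each other using interchange, collect the inner occurrences of $fg$ and $fg'$, and then apply one of the triangle identities of definition \ref{def-internaladjoints} to collapse an $\epsilon$-$\eta$ pair; a second triangle identity collapses the outer occurrence. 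The same argument, with the roles of the two adjunctions swapped, gives $\phi\circ\psi = 1_{g'}$, so $\phi$ is a 2-isomorphism.

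Next I would verify that $\phi$ is compatible with the units and counits, in the sense that $(\phi\star 1_f)\circ\eta = \eta'$ and $\epsilon'\circ(1_f\star\phi) = \epsilon$. Again, both identities reduce, after a single application of the interchange law, to one of the triangle identities for one of the two adjunctions. Finally, for uniqueness: if $\phi'\colon g\to g'$ is any 2-morphism satisfying these two compatibilities, then writing
$$\phi' \;=\; (g'\epsilon)\circ\bigl((\phi'\star 1_f)\star 1_g\bigr)\circ(\eta g) \;=\; (g'\epsilon)\circ(\eta' g) \;=\; \phi,$$
where the first equality uses a triangle identity for $(g,\eta,\epsilon)$ and interchange, and the second uses $(\phi'\star 1_f)\circ\eta = \eta'$. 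Hence $\phi$ is the unique compatible 2-morphism, and in particular any two compatible isomorphisms coincide.

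The only real obstacle is the combinatorial care needed when manipulating pastings such as $\eta'g$, $g'\epsilon$, and their composites: one must keep straight which $\star$ whiskering is being performed and apply the interchange law correctly so that the appropriate $\epsilon$-$\eta$ pair is adjacent when a triangle identity is invoked. All of the moves themselves are entirely formal; the proof really is just a careful unwinding of the triangle identities together with the interchange law.
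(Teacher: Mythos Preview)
Your argument is the standard and correct one: the mate construction gives the comparison 2-morphism, the two triangle identities (one from each adjunction) collapse $\psi\circ\phi$ and $\phi\circ\psi$ to identities, and the uniqueness step is the usual rewriting using one triangle identity together with the assumed compatibility $(\phi'\star 1_f)\circ\eta=\eta'$. The only caveat is that in your sketch of $\psi\circ\phi=1_g$ you should be explicit that interchange is used \emph{twice}---once to commute $\eta$ past $g'\epsilon$ and once to commute $\epsilon'$ past an $\eta'$ whiskering---so that the two triangle identities become applicable in sequence; as written, ``slide $\eta$ and $\eta'$ past each other'' is slightly imprecise, since it is really $\eta$ past $\epsilon$ (and dually) that needs to move.

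As for comparison with the paper: the paper states this proposition in Appendix~B without proof, treating it as a standard background fact about 2-categories. So there is no proof in the paper to compare against; your argument simply fills in what the paper leaves to the reader, and does so by the expected route.
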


\begin{definition}
\label{def-internalequivalence}
Let $\bC$ be a 2-category, and let $f:x \to y$ be a 1-morphism in $\bC$.  Then $f$ is called an \emph{equivalence} if it has a right adjoint $g$, and if the adjunction maps $1 \to fg$ and $gf \to 1$ are both isomorphisms.  This is equivalent to requiring $f$ to have a left adjoint $g$ with isomorphisms for adjunction maps.
\end{definition}

\subsection{Commutative diagrams in a 2-category}
Recall that, in a 1-category, a commutative diagram is a collection of objects and of morphisms
between them such that any two paths of composable arrows in the diagram between objects
$x$ and $y$ coincide.  In a 2-category, we say that a diagram of objects, morphisms, and 2-morphisms commutes if for each pair of composable paths $f_1, f_2, f_3 \ldots$ and $g_1, g_2, g_3 \ldots$ between objects $x$ and$y$, any two composable sequences of 2-morphisms from $\ldots f_3 \circ f_2 \circ f_1$
to $\ldots g_3 \circ g_2 \circ g_1$ coincide.

\begin{example}
Let $\bC$ be a 2-category.  A \emph{commutative triangle} in $\bC$ is a triple $x,y,z$ of objects, a triple $x \to y$, $y \to z$, and $x \to z$ of 1-morphisms, and an isomorphism between the composite $x \to y \to z$ and the map $x \to z$.  A \emph{commutative square} is a tuple of objects $w,x,y,z$, tuple of 1-morphisms $w \to x$, $w \to y$, $x \to z$, $y \to z$, and a 2-isomorphism between the composites $w \to x \to z$ and $w \to y \to z$.
For typesetting reasons, we omit the picture of the isomorphism when we draw a commutative triangle or square:
$$
\xymatrix{x \ar[r] \ar[dr] & y \ar[d] & w \ar[r] \ar[d] & x \ar[d] \\
 & z & y \ar[r] & z}$$
 \end{example}

\begin{example}
Let $\bC$ be a 2-category.  A \emph{commutative tetrahedron} in $\bC$ is a tuple $w,x,y,z$ of objects, a collection of 1-morphisms $w \to x$, $w \to y$, $w \to z$, $x \to y$, $x \to z$, and a collection of 2-isomorphisms between the composite $w \to x \to y$ and $w \to y$, the composite $w \to x \to z$ and $w \to z$, the composite $w \to y \to z$ and $w \to z$, and the composite $x \to y \to z$ and $x \to z$, such that the two isomorphisms between the composite $w \to x \to z$ and $w \to y \to z$ coincide.  We often draw a commutative tetrahedron in the following manner:
$$\tetrah{w}{x}{y}{z}$$
\end{example}

\begin{remark}
\label{nerve}
We may go on defining commutative $n$-simplices for $n>3$.  
When we refer to a commutative $n$-simplex in $\bC$ we are referring to a diagram
in which all the 2-morphisms are isomorphisms.  This is most satisfying when $\bC$ is a
(2,1)-category: in that case the collection of
objects, 1-morphisms, commutative triangles, commutative tetrahedra, etc. assemble to a simplicial
set called the \emph{nerve} of the (2,1)-category.  (Defining the nerve of a general (2,2)-category is more subtle.)
\end{remark}

\begin{example}
A \emph{prism} with vertices $a,b,c,x,y,z$ is a collection of arrows $a \to b$, $a \to c$,
$b \to c$, $a \to x$ $b \to y$, $c \to z$, $x \to y$, $x \to z$, and $y \to z$ and a collection of 2-isomorphisms between $a \to b \to c$ and $a \to c$, between $x \to y \to z$ and $x \to z$, between $a \to b \to y$ and $a \to x \to y$, between $b \to c \to z$ and $b \to y \to z$, and between $a \to c \to z$ and $a \to x \to z$, as in the picture.

\begin{center}
\setlength{\unitlength}{.1cm}
\begin{picture}(40,35)

\put(10,30){\vector(0,-1){20}}
\put(40,30){\vector(0,-1){20}}
\put(28,23.5){\vector(0,-1){20}}

\put(11.5,32){\vector(1,0){27}}
\put(11.5,31){\vector(3,-1){15}}
\put(29.5,26){\vector(2,1){9}}

\put(11.5,8){\vector(1,0){27}}
\put(11.5,7){\vector(3,-1){15}}
\put(29.5,2){\vector(2,1){9}}

\put(9,31){$a$}
\put(39,31){$c$}
\put(27,24.5){$b$}
\put(9,7){$x$}
\put(39,7){$z$}
\put(27,.5){$y$}

\end{picture}
\end{center}
The prism is called commutative if the two isomorphisms between $a \to c \to z$ and $a \to x \to y \to z$ coincide.  
\end{example}

\subsection{2-groupoids}

\begin{definition}
A \emph{2-groupoid} is a 2-category in which all the morphisms are equivalences and all the 2-morphisms are isomorphisms.  
\end{definition}

\begin{remark}
A 2-groupoid with one object is called a \emph{2-group}.  To each object $x$ in a 2-category we can associate a 2-group $\mathrm{Aut}(x)$ whose unique object is $x$, whose morphisms are the self-equivalence of $x$, and whose 2-morphisms are isomorphisms between these self-equivalences.

It is possible to describe 2-groups in terms of more classical algebraic structures.
The nerve of a 2-groupoid is a fibrant simplicial set whose homotopy groups (in each connected component) vanish above dimension 2. 
It is possible to show that connected, pointed spaces $X$ whose homotopy groups vanish above dimension 2 are classified up to homotopy by a group $G$ (the fundamental group of $X$), a
commutative group $A$ (the second homotopy group of $X$), and an element in group cohomology
$H^3(G;A)$ (a Postnikov invariant of $X$).  All this is more naturally encoded in terms of a
``crossed module."
\end{remark}

\subsection{2-functors between 2-categories}

\begin{definition}
Let $\bC$ and $\bD$ be 2-categories.  A \emph{2-functor} $F:\bC \to \bD$ is
\begin{enumerate}
\item An assignment that takes an object $x\in \bC$ to an object $Fx \in \bD$.
\item A collection of functors $F:\Hom_\bC(x,y) \to \Hom_\bD(Fx,Fy)$.
\item For each triple $x,y,z$ of objects, a natural isomorphism $\mu_{x,y,z}$
between the two ways of composing the square:
$$\xymatrix{
\Hom_\bC(y,z) \times \Hom_\bC(x,y) \ar[r]^{(F,F) \quad \,\,\,\,\,\,\,} \ar[d] & \Hom_\bD(Fy,Fz) \times \Hom_\bD(Fx,Fy) \ar[d] \\
\Hom_\bC(x,z) \ar[r]_F & \Hom_\bD(Fx,Fz)}$$
\end{enumerate}
We furthermore assume that $F(1_x) = 1_{Fx}$ for each $x$, and that a certain diagram
of $\mu$s built from a quadruple $w,x,y,z$ of objects commutes.  See \cite{hakim} for details.
\end{definition}

\begin{definition}
Let $\bC$ and $\bD$ be 2-categories, and let $F:\bC \to \bD$ be a 2-functor.  $F$ is called \emph{strict}
if all the coherence maps $\mu_{x,y,z}$ are identities.  The data of a strict 2-functor is equivalent to the data of a $\cat$-enriched functor in the sense of \cite{enrichedcat}.
\end{definition}

\begin{remark}
We will shortly introduce a notion of equivalence for 2-functors (in fact we will introduce a 2-category of 2-functors); note that not all 2-functors are equivalent to strict ones.

Many authors reserve the word ``2-functor'' for what we have called strict functors, and call 2-functors
\emph{pseudofunctors} (e.g. \cite{gray}, \cite{kellystreet}), though usually not in the more recent literature.  
\end{remark}

\begin{remark}
There is a more general notion of 2-functor where the coherence maps $\mu$ are not required to be invertible.  They are often called ``lax 2-functors.''  There are lax versions of many concepts in 2-category theory, where one replaces an isomorphism in a definition with a map in one direction or another.  For our purposes -- that is, stacks of categories -- the non-lax versions of all these concepts seem to be the correct ones.
\end{remark}

\begin{example}
Let $\bC$ be a 2-category and let $\Cat$ be the 2-category of 1-categories.  For each object $x$ of $\bC$ there is a strict 2-functor $\Hom(x,-):\bC \to \Cat$ and a strict 2-functor $\Hom(-,x):\bC^{op} \to \Cat$.  In fact, there is a strict 2-functor $\Hom:\bC^{op} \times \bC \to \Cat$.
\end{example}

\begin{proposition}
Let $\bC$ and $\bD$ be 2-categories, and let $F:\bC \to \bD$ be a 2-functor. 
If $f$ is a 1-morphism in $\bC$, and $f$ has a left (resp. right) adjoint, then $Ff$ has a left (resp. right) adjoint in $\bD$.  Furthermore if $f$ is an equivalence in $\bC$ then $Ff$ is an equivalence in $\bD$.
\end{proposition}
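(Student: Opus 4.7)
The plan is to transport the adjunction via the coherence data of $F$. Write $\phi_{h,k}\colon F(h)\circ F(k)\xrightarrow{\sim} F(h\circ k)$ for the invertible 2-morphisms that come with $F$ (denoted $\mu$ in the definition of 2-functor), and recall that $F$ strictly preserves units, so $F(1_x)=1_{Fx}$. Suppose $(g,\eta,\epsilon)$ is a right adjoint to $f\colon x\to y$ in $\bC$. As candidate adjunction data for $Ff$ in $\bD$ I would take
\[
\tilde\eta:=\phi_{g,f}^{-1}\circ F(\eta)\colon 1_{Fx}\Rightarrow Fg\circ Ff,\qquad
\tilde\epsilon:=F(\epsilon)\circ\phi_{f,g}\colon Ff\circ Fg\Rightarrow 1_{Fy}.
\]

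The bulk of the work is verifying the two triangle identities; by symmetry it suffices to handle $(Fg\star\tilde\epsilon)\circ(\tilde\eta\star Fg)=1_{Fg}$. My plan is to draw this composite as a diagram of 2-morphisms $Fg\Rightarrow Fg\circ Ff\circ Fg\Rightarrow Fg$ and rewrite each whiskered piece $F(\eta)\star Fg$ and $Fg\star F(\epsilon)$ using the naturality of $\phi_{-,-}$ as a natural transformation on the relevant hom-categories. This produces two copies of $\phi$ in the middle of the composite, meeting on $Fg\circ Ff\circ Fg$ and $F(gfg)$; here the associativity coherence of $F$ applied to the triple $(g,f,g)$ (the ``quadruple of objects'' axiom in the definition of a 2-functor) lets them cancel with the corresponding $\phi^{-1}$'s introduced by $\tilde\eta$ and $\tilde\epsilon$. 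What remains is $F(g\star\epsilon)\circ F(\eta\star g)$, which by functoriality of $F$ on $\Hom_{\bC}(y,x)$ equals $F\bigl((g\star\epsilon)\circ(\eta\star g)\bigr)=F(1_g)=1_{Fg}$, the last equalities using the original triangle identity in $\bC$ and the strict preservation of identity 2-morphisms by $F$.

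The left-adjoint case I would handle symmetrically, either by repeating the above with the roles of $\eta$ and $\epsilon$ interchanged or, more cheaply, by applying the right-adjoint result to the induced 2-functor $F^{op}\colon\bC^{op}\to\bD^{op}$. The equivalence statement is then immediate: $\tilde\eta$ and $\tilde\epsilon$ are defined as vertical composites of $F(\eta)$, $F(\epsilon)$, and coherence data; the $\phi$'s are invertible by definition, and because $F$ restricted to each hom-category is an ordinary functor it sends the iso 2-morphisms $\eta$ and $\epsilon$ to iso 2-morphisms. Hence if $f$ is an equivalence then so are $\tilde\eta$ and $\tilde\epsilon$, and $Ff$ is an equivalence in $\bD$.

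The main obstacle will be the bookkeeping in the triangle-identity verification: carefully arranging the naturality squares for $\phi$ alongside the associativity coherence so that the $\phi$'s introduced by rewriting the whiskered 2-morphisms really cancel in pairs. This is a standard if tedious coherence diagram chase, analogous to deducing from MacLane's pentagon that any two composites of associators on a fixed word agree, and I expect no conceptual obstruction once the correct pasting diagram is drawn.
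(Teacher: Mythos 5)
The paper states this proposition in Appendix B without any proof, so there is nothing to compare your argument against; on its own terms, your proof is the standard one and is correct. Transporting the unit and counit along $F$ and the coherence isomorphisms $\mu$ (your $\phi$), reducing the triangle identities to naturality of $\mu$ plus the associativity coherence for the triple $(g,f,g)$, and then invoking functoriality of $F$ on the hom-category $\Hom_{\bC}(y,x)$ to land on $F(1_g)=1_{Fg}$, is exactly the expected argument, and the deduction of the equivalence statement from invertibility of the $\mu$'s and of $F(\eta)$, $F(\epsilon)$ is immediate. One point you should make explicit: after rewriting the whiskered pieces by naturality, the composite also contains the unit constraints $\mu_{1_x,g}$ and $\mu_{g,1_y}$, and their disappearance is not a consequence of associativity coherence alone together with $F(1_x)=1_{Fx}$ --- it uses the unit coherence axiom for a (normalized) 2-functor, which the paper leaves implicit in its reference to Hakim. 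With that axiom in hand your cancellation goes through; without it you would have to absorb an automorphism of $1_{Fx}$ into your candidate unit $\tilde\eta$. This is a bookkeeping remark rather than a gap in the strategy.
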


\subsection{Composite 2-functors}
Let $\bC$, $\bD$, and $\bE$ be 2-categories.  Let $F:\bC \to \bD$ and $G:\bD \to \bE$ be 2-functors.
There is a \emph{composite 2-functor} $GF = G \circ F$ from $\bC$ to $\bE$; $GF$ is defined on objects,
1-morphisms, and 2-morphisms of $\bC$ in the evident way.  
To complete the definition it is necessary to describe the coherence data $\mu_{x,y,z}$ for $GF$ -- this is straightforward but we refer to \cite{gray} for details.

\subsection{The 2-category $\twofunct(\bC,\bD)$}
 
Let $\bC$ and $\bD$ be 2-categories.  For every pair of 2-functors $F:\bC \to \bD$ and $G:\bC \to \bD$
we may define a 1-category $\twonat(F,G)$.  Objects of $\twonat(F,G)$ are called \emph{2-natural transformations}, and morphisms are called \emph{modifications}.  
Given three 2-functors $F,G$ and $H$ from $\bC$ to $\bD$, a composition functor $\twonat(G,H) \times \twonat(F,G) \to \twonat(F,H)$ is defined.  This composition is strictly associative, and it has strict units, so this data defines a 2-category $\twofunct(\bC,\bD)$.  We define 2-natural transformations here; more details may be found in \cite{hakim}:

\begin{definition}
Let $\bC$ and $\bD$ be 2-categories.  Let $F$ and $G$ be two 2-functors from $\bC$ to $\bD$.
A \emph{2-natural transformation} $n$ from $F$ to $G$ consists of
\begin{enumerate}
\item An assignment that takes objects $x$ of $\bC$ to 1-morphisms $n(x):Fx \to Gx$ in $\bD$ 

\item An assignment that takes 1-morphisms $f:x \to y$ in $\bC$ to isomorphisms 
$n(f):Gf \circ n(x) \cong n(x) \circ Ff$.
\end{enumerate}
such that, for each pair of 1-morphisms $x \stackrel{f}{\to} y$ and $y \stackrel{g}{\to}
z$ in $\bC$, a certain prism with vertices $Fx,Fy,Fz,Gx,Gy,Gz$ commutes.  See \cite{hakim} for details.
\end{definition}

\begin{proposition}
Let $\bC$ and $\bD$ be 2-categories.  Let $n:F \to G$ be a 1-morphism in $\twofunct(\bC,\bD)$.
\begin{enumerate}
\item $n$ has a right (resp. left) adjoint $m:F \to G$ if and only if each 1-morphism $n(x):F(x) \to G(x)$ in $\bD$ has a left (resp. right) adoint. (see definition \ref{def-internaladjoints})

\item $n$ is an equivalence if and only if each $n(x):F(x) \to G(x)$ is an equivalence in $\bD$. (see definition \ref{def-internalequivalence}) 
\end{enumerate}

\end{proposition}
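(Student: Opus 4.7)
The plan is to prove each of the two bi-implications separately, treating the right-adjoint case in detail (the left-adjoint case being dual) and using throughout the calculus of mates of 2-morphisms under adjunctions.

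For the \emph{only if} direction of (1), I would observe that evaluation at an object $x \in \bC$ is a strict 2-functor $e_x: \twofunct(\bC,\bD) \to \bD$, sending $F \mapsto F(x)$, $n \mapsto n(x)$, and $\phi \mapsto \phi_x$. Strictness is immediate because both horizontal and vertical composition of 2-natural transformations and modifications are defined componentwise. The preceding proposition on preservation of adjoints by 2-functors then yields at once that if $n$ admits a right adjoint in $\twofunct(\bC,\bD)$, then $n(x)$ admits a right adjoint in $\bD$ for every $x$ (with analogous statements for left adjoints and for equivalences).

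The \emph{if} direction of (1) is the substantive step. Assume we are given, for each $x$, a right adjoint $(m(x), \eta_x, \epsilon_x)$ with $m(x): G(x) \to F(x)$, unit $\eta_x: 1_{F(x)} \to m(x)n(x)$, and counit $\epsilon_x: n(x)m(x) \to 1_{G(x)}$. I would construct the global right adjoint $m: G \to F$ in three steps. First, extend $x \mapsto m(x)$ to a 2-natural transformation by defining, for each 1-morphism $f: x \to y$ in $\bC$, the coherence isomorphism
$$m(f) \;=\; \bigl(m(y) G(f) \epsilon_x\bigr) \circ \bigl(m(y) n(f)^{-1} m(x)\bigr) \circ \bigl(\eta_y F(f) m(x)\bigr),$$
the mate of the coherence iso $n(f)^{-1}$ under the adjunctions at $x$ and $y$; invertibility of $m(f)$ is formal, and the prism (cocycle) identity for $m$ against a composable pair $x \to y \to z$ reduces to the corresponding prism identity for $n$ after several applications of the triangle identities at $x$, $y$, $z$. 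Second, let the prospective unit $\eta: 1_F \to m \circ n$ and counit $\epsilon: n \circ m \to 1_G$ have components $\eta_x$ and $\epsilon_x$; the modification axiom in each case is the naturality square against an arbitrary $f$, and unfolding the definition of $m(f)$ shows both hold on the nose. Third, verify the triangle identities for $(m, \eta, \epsilon)$: since composition of 2-morphisms between modifications is pointwise, it suffices to check them after evaluating at each $x$, where they reduce to the triangle identities for $(m(x), \eta_x, \epsilon_x)$.

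For part (2), recall that an equivalence in a 2-category is by definition an adjunction with invertible unit and counit. The \emph{only if} direction then follows by applying the 2-functor $e_x$, since 2-functors preserve equivalences. For the \emph{if} direction, choose for each $x$ an adjoint inverse $m(x)$ with invertible $\eta_x$ and $\epsilon_x$, apply (1) to assemble $(m, \eta, \epsilon)$; the modifications $\eta$ and $\epsilon$ are automatically invertible because a modification is invertible precisely when each of its components is invertible in $\bD$ (the pointwise inverses automatically satisfy the dual naturality square). The main obstacle throughout is the prism identity for $m$ in step one of the \emph{if} direction of (1): it is the only place where the full coherence data of $\bC$, $\bD$, and $n$ enter, and its verification is the standard but tedious ``calculus of mates'' computation. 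Everything else is routine pointwise manipulation.
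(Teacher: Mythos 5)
The paper states this proposition without proof, so there is nothing to compare against; the question is only whether your argument is sound. The ``only if'' directions and part (2) are fine, but there is a genuine gap in the ``if'' direction of part (1), precisely at the words ``invertibility of $m(f)$ is formal.'' It is not. The 2-cell $m(f)$ you write down is the mate of $n(f)^{-1}$ under the adjunctions $n(x) \dashv m(x)$ and $n(y) \dashv m(y)$, and the mate of an invertible 2-cell under a pair of adjunctions is in general \emph{not} invertible --- this is exactly the content of Beck--Chevalley conditions. Since the 1-morphisms of $\twofunct(\bC,\bD)$ are required to have invertible coherence cells, your candidate $m$ need not be a 1-morphism of $\twofunct(\bC,\bD)$ at all. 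Concretely: let $\bC$ be the arrow category $\{0 \to 1\}$ with trivial 2-cells and $\bD = \Cat$; let $F(0)=F(1)=\Sets$ with $F(f)=\mathrm{id}$, let $G(0)=\Sets$ and $G(1)$ the terminal category, and let $n(0)=\mathrm{id}$, $n(1)$ the unique functor, with $n(f)$ the identity. Both components have right adjoints (namely $\mathrm{id}$ and the terminal-set functor $T$), but any right adjoint $m$ of $n$ in $\twofunct(\bC,\Cat)$ would, by your own ``only if'' argument and uniqueness of adjoints, have $m(0)\cong\mathrm{id}$ and $m(1)\cong T$, and would therefore require an isomorphism $\mathrm{id}_{\Sets}=F(f)\circ m(0)\cong m(1)\circ G(f)=\mathrm{const}_T$, which does not exist. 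So the implication you are trying to prove --- and indeed part (1) of the proposition as stated, even after silently repairing the left/right crossing in its wording, which your evaluation-functor argument shows must be a typo --- is false for transformations with invertible naturality constraints; it becomes true only if one passes to lax transformations or adds the hypothesis that the mates $m(f)$ are invertible.

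Part (2) is unaffected: when each $n(x)$ is an equivalence you may choose adjoint equivalences, and then $m(f)$ is a composite of whiskerings of the invertible cells $\eta_y$, $n(f)^{-1}$, $\epsilon_x$, hence invertible; the remainder of your argument (the modification axioms via the triangle identities, and pointwise invertibility of $\eta$ and $\epsilon$) goes through. In short, your proposal correctly establishes part (2) and the ``only if'' half of part (1), but the one step you dismissed as formal is exactly where part (1) breaks down.
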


\subsection{Adjoint 2-functors between 2-categories}
\label{Badjoint}

Let $\bC$ and $\bD$ be 2-categories, and let $F:\bC \to \bD$ and $G:\bD \to \bC$ be 2-functors.  We may
form the two 2-functors
$$\begin{array}{c}
\Hom_{\bD}(F-,-):  \bC^{op} \times \bD  \to  \Cat \\
\Hom_{\bC}(-,G-): \bC^{op} \times \bD  \to  \Cat
\end{array}$$

\begin{definition}
Let $\bC$ and $\bD$ be 2-categories.  A \emph{2-adjunction} from $\bC$ to $\bD$ is a pair of 2-functors
$F:\bC \to \bD$ and $G:\bD \to \bC$ together with a 2-natural equivalence -- i.e. an equivalence in the 2-category $\twofunct(\bC^{op}\times\bD,\Cat)$ -- between the two 2-functors
$\Hom(F-,-)$ and $\Hom(-,G-)$.

We say that $F$ is \emph{left 2-adjoint} to $G$ and that $G$ is \emph{right 2-adjoint} to $F$.
\end{definition}

\begin{remark}
This is another definition with lax generalizations.
\end{remark}

We can apply the adjunction to the identity 2-functors $Fc \to Fc$ and $Gd \to Gd$ to obtain 2-natural transformations $1_\bC \to GF$ and $FG \to 1_\bD$.

\begin{definition}
Let $\bC$ and $\bD$ be 2-categories.  An \emph{equivalence} from $\bC$ to $\bD$ is a pair of adjoint 2-functors $F:\bC \to \bD$, $G:\bD \to \bC$ such that the 2-natural transformations $1_\bC \to GF$ and $FG \to 1_\bD$ are equivalences.
\end{definition}

A 2-functor $F:\bC \to \bD$ is called \emph{essentially fully faithful} if the functor $\Hom_\bC(x,y) \to \Hom_\bD(Fx,Fy)$ is an equivalence of categories for every pair of objects $x,y \in \bC$.  $F$ is called \emph{essentially surjective} if for every object $d \in \bD$ there is an object $c \in \bD$ such that $Fc$ and $d$ are equivalent in $\bD$.

\begin{proposition}
Let $\bC$ and $\bD$ be 2-categories.  Let $F:\bC \to \bD$ be a 2-functor.  The following are equivalent.
\begin{enumerate}
\item $F$ is essentially fully faithful and essentially surjective.
\item $F$ is part of an equivalence $F:\bC \to \bD$, $G:\bD \to \bC$.
\end{enumerate}
\end{proposition}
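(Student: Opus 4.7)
The direction $(2) \Rightarrow (1)$ is the easier one. Assuming an equivalence $F:\bC \to \bD$, $G:\bD \to \bC$ with 2-natural equivalences $\eta:1_\bC \stackrel{\sim}{\to} GF$ and $\epsilon:FG \stackrel{\sim}{\to} 1_\bD$, essential surjectivity of $F$ follows immediately: for $d \in \bD$, $\epsilon_d$ is an equivalence $F(Gd) \stackrel{\sim}{\to} d$. For essential full faithfulness, I would construct an inverse to $F_{x,y}:\Hom_\bC(x,y) \to \Hom_\bD(Fx,Fy)$ as the composite $\Hom_\bD(Fx,Fy) \xrightarrow{G_{Fx,Fy}} \Hom_\bC(GFx,GFy) \xrightarrow{\eta_x^*, \eta_y^{-1*}} \Hom_\bC(x,y)$, where the last functor is conjugation by the equivalences $\eta_x$ and $\eta_y^{-1}$. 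The triangle identities, together with the 2-naturality of $\eta$ and $\epsilon$, show that this composite is inverse to $F_{x,y}$ up to natural isomorphism.

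For the harder direction $(1) \Rightarrow (2)$, the plan is the standard 2-categorical ``quasi-inverse'' construction. First, invoking essential surjectivity and the axiom of choice, for each object $d \in \bD$ pick an object $G(d) \in \bC$ together with a chosen equivalence $\epsilon_d: F(G(d)) \stackrel{\sim}{\to} d$ in $\bD$ (with chosen quasi-inverse and adjunction data, so that $\epsilon_d$ comes with its own $\epsilon_d^{-1}$ and coherence isomorphisms). Then for each pair $d,d'$ I would define a functor $G_{d,d'}:\Hom_\bD(d,d') \to \Hom_\bC(G(d),G(d'))$ as a quasi-inverse to the composite
$$\Hom_\bC(G(d),G(d')) \xrightarrow{F_{G(d),G(d')}} \Hom_\bD(F G(d),F G(d')) \xrightarrow{\,\,\sim\,\,} \Hom_\bD(d,d'),$$
where the second arrow is conjugation by $\epsilon_d$ and $\epsilon_{d'}^{-1}$; the composite is an equivalence of categories by the essential full faithfulness of $F$, so such a quasi-inverse $G_{d,d'}$ exists (again requiring choice). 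This defines $G$ on 1- and 2-morphisms in such a way that, by construction, $F \circ G$ is equivalent to $1_\bD$ via the chosen $\epsilon_d$.

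The core remaining work is to promote the family $(G,G_{d,d'})$ to a genuine 2-functor and to build the unit $\eta:1_\bC \to GF$. For the coherence data $\mu_{d,d',d''}$ of $G$, the natural isomorphism between $G_{d,d''}(g'\circ g)$ and $G_{d',d''}(g')\circ G_{d,d'}(g)$ is forced: both 1-morphisms become canonically isomorphic after applying $F$ and conjugating by the $\epsilon$'s, so essential full faithfulness of $F_{G(d),G(d'')}$ gives a unique 2-isomorphism between them, and the pentagon/tetrahedron coherence axiom for $\mu$ follows from the uniqueness clause of essential full faithfulness applied to the corresponding diagram in $\bD$ (which commutes there by naturality). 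To construct $\eta$, observe that $F(GF(x))$ and $F(x)$ are canonically equivalent via $\epsilon_{F(x)}$; pulling back through $F_{x,GF(x)}$ (essentially full faithfulness) yields a 1-morphism $\eta_x:x \to GF(x)$, and the same uniqueness argument supplies the 2-isomorphisms $\eta(f)$ and verifies the prism coherence for a 2-natural transformation, and then that $\eta_x$ is an equivalence in $\bC$ (since $F\eta_x$ corresponds to an identity up to equivalence, and $F$ reflects equivalences because of essential full faithfulness).

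The main obstacle is the bookkeeping of 2-categorical coherence: showing that the forced 2-isomorphisms $\mu$ and $\eta(f)$ satisfy the pentagon/tetrahedron axioms for a 2-functor and for a 2-natural transformation. The key technical device throughout is the ``reflection'' principle that $F$ being essentially fully faithful means every identity of 2-morphisms in $\bD$ between images lifts to an identity in $\bC$; so each coherence equation one must verify in $\bC$ reduces, after applying $F$ and simplifying by the chosen $\epsilon$'s, to an equation in $\bD$ that holds by the axioms of a 2-category there. Once the 2-functor structure on $G$ and both $\eta, \epsilon$ are in hand, the triangle identities follow similarly from essential full faithfulness, completing the equivalence. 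All of this is standard (and goes back, in essence, to Kelly and Street); the exposition I have in mind largely follows \cite{kellystreet}.
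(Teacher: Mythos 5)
The paper itself offers no proof of this proposition: it appears in Appendix B as part of a background summary of 2-category theory, stated as a standard fact with the coherence details deferred to the literature (Gray, Kelly--Street, Hakim). So there is no argument in the paper to compare yours against line by line.

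That said, your sketch is the standard and correct argument, and I see no gap in the outline. Two points deserve emphasis to match the paper's conventions. First, the paper's definition of an equivalence of 2-categories is not merely ``a pair $F,G$ with $1_\bC \simeq GF$ and $FG \simeq 1_\bD$'': it requires $F$ and $G$ to form a 2-adjunction (an equivalence $\Hom_\bD(F-,-) \simeq \Hom_\bC(-,G-)$ in $\twofunct(\bC^{op}\times\bD,\Cat)$) whose induced unit and counit are equivalences. So in the direction $(1)\Rightarrow(2)$ you must not stop at constructing $\eta$ and $\epsilon$ separately; the final step you mention --- adjusting one of them so the triangle identities hold, i.e.\ upgrading to an adjoint equivalence --- is genuinely needed and not optional here. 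Second, in $(2)\Rightarrow(1)$, your claim that the composite $\Hom_\bD(Fx,Fy)\to\Hom_\bC(x,y)$ is a two-sided quasi-inverse to $F_{x,y}$ does use both adjunction morphisms (or the triangle identities): naturality of $\eta$ alone gives only one of the two composites isomorphic to an identity. You correctly invoke both, so this is fine, but it is the place where a careless reading would leave a one-sided inverse. With those caveats your proposal is a faithful rendering of the Kelly--Street argument and is an acceptable proof of the statement.
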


\subsection{Direct and inverse 2-limits of 1-categories}
\label{Blimits}

Let $\bI$ be a 2-category, and let $F:\bI \to \Cat$ be a 2-functor.  It is possible to define categories $\twolim_\bI F$ and $\twocolim_\bI F$ with the appropriate 2-universal property.  We will give these definitions here in the form that we need them; in particular we only define $\twocolim_\bI F$ in case $\bI$ is a filtered poset.

\begin{definition}
Let $\bI$ be a 2-category and let $F:\bI \to \Cat$ be a 2-functor.  Let $\twolim_I F$ denote the category whose objects consist of the following data:
\begin{enumerate}
\item An assignment that takes an object $i \in \bI$ to an object $x_i \in F(i)$.
\item An assignment that takes a morphism $f:i \to j$ to an isomorphism $F(f)(x_i) \cong x_j$.
\end{enumerate}
We require that for each commutative triangle $\alpha:gf \cong h$ with vertices $i,j,k$ in $\bI$,
the diagram
$$\xymatrix{
F(g)F(f)x_i \ar[r]\ar[d]^{\alpha} & F(g) x_j \ar[d] \\
F(h) \ar[r] & x_k}
$$
commutes.
The morphisms of this category are collections of maps $x_i \to y_i$ commuting with the maps
$F(f)(x_i) \to x_j$.
\end{definition}

\begin{definition}
Let $I$ be a filtered poset, and let $F:I \to \Cat$ be a 2-functor.  Let $\twocolim_I F$ denote
the category whose objects are $\coprod_{i \in I} F(i)$, and whose morphisms 
$\Hom(x \in F(i),y \in F(j))$ are elements of the limit
$$\varinjlim_{k\geq i \text{ and } j} \Hom_{F(k)}(x_i\vert_k,x_j\vert_k)$$
Here if $\ell \leq k$ and $x \in F(\ell)$, the notation $x\vert_k$ denotes the image of $x$ under the functor $F(\ell) \to F(k)$ induced by the unique morphism $\ell \to k$.
\end{definition}

\end{document}